\setlist[1]{itemsep=2pt}
\newtheorem{theorem}{Theorem}[section]
\newtheorem{prop}[theorem]{Proposition}
\newtheorem{corollary}[theorem]{Corollary}
\newtheorem{lemma}[theorem]{Lemma}
\newtheorem{defi}[theorem]{Definition}
\theoremstyle{remark} 
\newtheorem{remark}[theorem]{Remark}
\newcommand{\R}{\mathbb{R}}
\newcommand{\N}{\mathbb{N}}
\newcommand{\ve}{\varepsilon}
\DeclareMathOperator{\rank}{\mathrm{rank}}
\title[]{Curvature measures and the sub-Riemannian\\ Gauss-Bonnet theorem}
\author[D. Barilari]{Davide Barilari}
\address[Davide Barilari, Eugenio Bellini]{Dipartimento di Matematica "Tullio Levi-Civita", Universit\`a degli Studi di Padova, via Trieste 63, 35131 Padova (PD), Italy\\
\textbf{\emph{davide.barilari@unipd.it,\, eugenio.bellini@unipd.it}}}
\author[E. Bellini]{Eugenio Bellini}
\author[A.~Pinamonti]{Andrea Pinamonti}
\address[Andrea Pinamonti]{Dipartimento di Matematica, Universit\`a di Trento,
Via Sommarive, 14, 38123 Povo TN, Italy\\
\textbf{\emph{andrea.pinamonti@unitn.it}}}
\begin{document}
\maketitle
\begin{abstract}
We adopt a measure-theoretic perspective on the Riemannian approximation scheme proving a sub-Riemannian Gauss-Bonnet theorem for surfaces in 3D contact manifolds. We show that the zero-order term in the limit is a singular measure supported on isolated characteristic points. In particular, this provides a unified interpretation of previous results obtained in \cite{Balogh1,grong2024}.

Moreover we give natural geometric conditions under which our result holds, namely when the surface admits characteristic points of finite order of degeneracy. This notion, which we introduce, extends the concept of mildly degenerate characteristic points of \cite{Rossi_deg} for the Heisenberg group.  

As a byproduct, we prove that the mean curvature around an isolated characteristic point of finite order of degeneracy is locally integrable.  In particular, this positively answers a question by \cite{DNG} for analytic surfaces in every analytic 3D contact manifold.
\end{abstract}
\tableofcontents

\section{Introduction}

Embedded surfaces play a fundamental role in the theory of contact structures. Since the seminal works of Giroux~\cite{giroux1, giroux2}, they have become central tools in the classification of contact structures, marking a turning point in the study of 3-dimensional contact topology. From a different perspective, works such as~\cite{Blair, Massot} have explored the interplay between contact and Riemannian geometry, introducing analytical techniques that have deepened our understanding of the geometry of contact manifolds.

The study of surfaces in contact 3-manifolds has also begun to attract attention from the point of view of sub-Riemannian geometry. Contributions such as~\cite{Daniele,BellBosc,karen} have addressed the existence and properties of induced metric structures and the corresponding stochastic geometric evolutions, while other works have explored the tightness of distributions in terms of sub-Riemannian curvature invariants~\cite{ABBR2024}.

 Within the sub-Riemannian framework, one of the main challenges is the establishment of a canonical notion of Gaussian curvature and of a Gauss-Bonnet type theorem.
Recent contributions which are close in spirit with our investigation were achieved in \cite{Balogh1,grong2024, DV16,V23} (cf.\ also \cite{ABS08} for the almost-Riemannian setting). In particular, the results in \cite{Balogh1,grong2024} rely on the so-called \emph{Riemannian approximation scheme}, i.e., a family of Riemannian metrics $g^{\ve}$ converging to the sub-Riemannian one. This is a powerful tool to transport results from classical Riemannian geometry into the sub-Riemannian context.

Our goal in this paper is to continue the above-mentioned investigation by developing a \emph{measure-theoretic viewpoint} on the Gauss--Bonnet theorem for  surfaces embedded in $3$-dimensional contact sub-Riemannian manifolds, thereby providing a unified perspective on the previous results.  

 The main novelty of our approach is that, instead of focusing on the limit of the Gaussian curvature functions $K^{\ve}$ associated with the Riemannian approximation scheme $g^{\ve}$, we directly look at the limit of the Gaussian \emph{curvature measures} $K^{\ve}\sigma^{\ve}$, where $\sigma^{\ve}$ is the corresponding  surface area form. In doing so, we are able to characterize both an absolutely continuous and a singular part of this limit, the singular part being concentrated on characteristic points. The notion of a curvature measure, as employed here, is inspired by the seminal work of Federer \cite{F59}.
 
\subsection{Characteristic vector field and its singularities}
 In order to state our results, we review some preliminaries on contact sub-Riemannian manifolds. More details are given in Sections~\ref{sec:preliminaries}, \ref{sec:surfaces_in_3D_cont}.
 
  A three-dimensional contact sub-Riemannian manifold is a triple $(M,\mathcal{D},g)$, where $M$ is a smooth three-dimensional manifold, $\mathcal{D}$ is a \emph{contact distribution} and $g$ a \emph{sub-Riemannian metric}, i.e., a smooth metric on $\mathcal D$. Given an oriented surface $S$ embedded in $M$, the intersection $TS\cap\mathcal D$ defines a singular field of directions, whose singularities are the points $p\in S$ where $T_pS=\mathcal D_p$, called characteristic points. The characteristic set $\Sigma(S)$ is the set of characteristic points of $S$. 

The sub-Riemannian metric together with the orientation of $S$ induce a choice of a unique  \emph{characteristic vector field}, which is a field $X\in \mathfrak{X}(S)$ satisfying 
        \begin{equation}
            \mathrm{span}_\mathbb R\{X_p\}=\begin{cases}
                T_pS\cap \mathcal D_p, &p\notin \Sigma(S),\\
                0,\qquad \qquad \quad  &p\in \Sigma(S).
            \end{cases}
        \end{equation}

 We recall that given a smooth manifold $S$ and a smooth vector field $X\in\mathfrak X(S)$ vanishing at $q\in S$, the differential of $X$ at $q$ is the linear map $D_qX:T_qS\to T_qS$ defined by $D_qX(Y)=[Y,X]_q$.

We say that a characteristic point is \emph{degenerate} if $\det(D_{q}X)= 0$, and \emph{non-degenerate} otherwise.

\begin{remark}\label{r:ker1}
One can prove that, since $\mathcal D$ a contact distribution, we have that $\mathrm{div}(X)|_{q}\neq 0$. The latter condition is independent of the measure used to compute the divergence. Being $\mathrm{trace}(D_{q}X)=\mathrm{div}(X)|_{q}$, at degenerate characteristic points one necessarily has $\dim \ker (D_{q}X)=1$. 

For more details we refer to Section~\ref{sec:surfaces_in_3D_cont} (cf.\ in particular  Remark~\ref{rmk:div_non_zero}). 

\end{remark}

\subsection{The Riemannian approximation}
It is well-known \cite{Agrachev} that the sub-Riemannian metric  $g$ induces a canonical choice of a vector field $f_0\in \mathfrak{X}(M)$ called the Reeb vector field, which is globally transverse to the distribution $\mathcal{D}$ (see \cref{sec:preliminaries} for a more detailed definition). We consider the family of Riemannian metrics $g^\varepsilon$ approximating $g$ for which
\begin{equation}\label{eq:g_ve_properties0}
                g^\ve|_{\mathcal{D}}=g,\qquad g^\ve(\mathcal{D},f_0)=0,\qquad g^\ve(f_0,f_0)=\frac{1}{\ve}.
            \end{equation}
Given a compact orientable surface $S$ embedded in $M$, we denote with $K^\ve$ and $\sigma^\ve$ the Gaussian curvature and area form of the Riemannian surface $(S,g^\ve|_S)$, respectively. Our approach to achieve a Gauss-Bonnet theorem consists in studying the family of signed Borel measures $K^\ve\sigma^{\ve}$ as $\ve\to 0$. Unfortunately, the latter sequence does not converge as $\ve\to 0$. However, the rescaled sequence 
\[
\sqrt{\varepsilon}\, K^\varepsilon \sigma^\varepsilon
\]
strongly converges in $C^{1}(S)^{*}$, as $\varepsilon \to 0$, to a signed Borel measure, which we denote $\mu_{-1}$, with vanishing total mass. This limit is regular away from the characteristic set, while near each isolated characteristic point it concentrates masses in a way dictated by the local behaviour of the characteristic foliation. Nonetheless, the existence of the limit measure $\mu_{-1}$ allows us to identify and remove the divergent term from $ K^\varepsilon \sigma^\varepsilon$: in our main result we prove that the difference
\[
 K^\varepsilon \sigma^\varepsilon - \frac{1}{\sqrt{\varepsilon}} \mu_{-1}
\]
 strongly converges in $C^{1}(S)^{*}$ to a sum of Dirac deltas supported at the characteristic points, each weighted by the \emph{topological index} of the characteristic vector field. 

Our main result can be stated under the assumption that the characteristic points are \emph{of finite order of degeneracy}. This means that either the characteristic points are non-degenerate, or, if degenerate, their order of degeneracy is finite along a (suitably defined) curve tangent to the kernel of the differential. The notion introduced here generalizes the concept of mildly degenerate characteristic points, as defined in \cite{Rossi_deg} for the Heisenberg group. We refer to Section~\ref{ss:finiteorder} after the statement for the formal definition

\begin{theorem}\label{thm:gauss_bonn_lim_intro0}
            Let $(M,\mathcal D,g)$ be a contact sub-Riemannian manifold and $S$ be an embedded, compact and orientable surface, with characteristic vector field $X$. Denote $K^\ve$ and $\sigma^\ve$ the Gaussian curvature and area form induced on $S$ by the metric $g^\ve$, respectively.  
            Assume that the characteristic set consists of points which are of finite order of degeneracy. Then the following limit of signed Borel measures exists in the strong $C^{1}(S)^{*}$ sense
            \begin{equation}\label{eq:Omega_-1}
                \mu_{-1}:=\lim_{\ve\to 0}\sqrt{\ve}K^\ve\sigma^\ve. 
            \end{equation}
           Furthermore  the characteristic set is finite $\Sigma(S)=\{q_1,\dots,q_\ell\}\subset S$ and the following limit  exists in the strong $C^{1}(S)^{*}$ sense
            \begin{equation}\label{eq:limit_measure_intro}
                \lim_{\ve\to 0}\left(K^\ve\sigma^\ve-\frac{\mu_{-1}}{\sqrt{\ve}}\right)=2\pi\sum_{i=1}^\ell\mathrm{ind}(q_i)\delta_{q_i},
            \end{equation}
            where $\delta_{q_{i}}$ denotes a Dirac delta centred at $q_i\in S$ and $\mathrm{ind}(q_i)$ denotes the index of $X$ at $q_i$.
        \end{theorem}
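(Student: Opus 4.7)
The plan is to split the analysis at the characteristic set $\Sigma(S)$: away from $\Sigma(S)$ the induced Riemannian metric $g^\varepsilon|_S$ degenerates in a controlled way and the asymptotics of $K^\varepsilon\sigma^\varepsilon$ in $\sqrt{\varepsilon}$ can be computed from the sub-Riemannian data, while near each characteristic point the finite order of degeneracy hypothesis supplies a normal form strong enough to extract both the divergent and the concentrated contributions. As a preliminary, the local normal form of $X$ at a point of finite order of degeneracy makes each $q_i\in\Sigma(S)$ an isolated zero, which combined with compactness of $S$ gives the finiteness $\Sigma(S)=\{q_1,\dots,q_\ell\}$.

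On $S\setminus\Sigma(S)$ I would work in a $g^\varepsilon$-orthonormal frame $(e_1^\varepsilon,e_2^\varepsilon)$ for $TS$ aligned with the characteristic direction and apply Gauss' equation to the embedding $S\hookrightarrow(M,g^\varepsilon)$, expanding everything in $\sqrt{\varepsilon}$. This yields an asymptotics
\begin{equation*}
K^\varepsilon\sigma^\varepsilon=\frac{\omega_{-1}}{\sqrt{\varepsilon}}+\omega_0+O(\sqrt{\varepsilon}),
\end{equation*}
with $\omega_{-1},\omega_0$ smooth $2$-forms on $S\setminus\Sigma(S)$ expressed through sub-Riemannian invariants (Reeb field, second fundamental form of the contact distribution), proving smooth convergence of both $\sqrt{\varepsilon}K^\varepsilon\sigma^\varepsilon$ and $K^\varepsilon\sigma^\varepsilon-\mu_{-1}/\sqrt{\varepsilon}$ off $\Sigma(S)$, with no concentration. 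Near each $q_i$, the adapted coordinates given by the finite order hypothesis, together with the local integrability of the mean curvature (the byproduct highlighted in the abstract), provide the $L^1$-control of $\omega_{-1}$ and of the remainder needed to extend $\mu_{-1}$ as a signed Borel measure of finite mass. The vanishing $\mu_{-1}(S)=0$ then follows from the $\varepsilon$-independent identity $\int_S K^\varepsilon\,d\sigma^\varepsilon=2\pi\chi(S)$ by matching powers of $\sqrt{\varepsilon}$, establishing \eqref{eq:Omega_-1}.

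For the Dirac coefficient in \eqref{eq:limit_measure_intro} I would apply the classical Gauss--Bonnet theorem on a small geodesic disk $D_r(q_i)\subset(S,g^\varepsilon|_S)$,
\begin{equation*}
\int_{D_r(q_i)}K^\varepsilon\,d\sigma^\varepsilon=2\pi-\int_{\partial D_r(q_i)}k_g^\varepsilon\,ds^\varepsilon,
\end{equation*}
subtract $\mu_{-1}(D_r(q_i))/\sqrt{\varepsilon}$ from both sides, and identify the $O(1)$ part of the boundary integral. The key observation is that as $\varepsilon\to 0$ the $g^\varepsilon$-unit tangent to $\partial D_r(q_i)$ rotates together with the characteristic direction $X$, so its total turning differs from the Euclidean $2\pi$ exactly by $-2\pi\,\mathrm{ind}(q_i)$, by the very definition of the Poincar\'e--Hopf index of $X$ at $q_i$. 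Letting first $\varepsilon\to 0$ and then $r\to 0$ produces the claimed coefficient $2\pi\,\mathrm{ind}(q_i)$, and compatibility with the total mass $2\pi\chi(S)=2\pi\sum_i\mathrm{ind}(q_i)$ furnishes a consistency check.

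The hardest step is this last one: the boundary integral is genuinely singular as $\varepsilon\to 0$, and its divergent $\varepsilon^{-1/2}$ part must be cancelled against $\mu_{-1}(D_r(q_i))/\sqrt{\varepsilon}$ uniformly in $r$ so that only the topological $O(1)$ term survives. This is where the finite order of degeneracy is indispensable, since it yields polynomial control of all the $\varepsilon$-expansions through $q_i$ and, in the degenerate case, restricts the possible local pictures of the characteristic foliation to a manageable family. A related technical point is upgrading weak-$*$ to strong convergence in $C^{1}(S)^{*}$, which requires uniform equicontinuity of the approximating measures on $C^1$ test functions and again rests on the integrability estimates supplied by the finite order hypothesis.
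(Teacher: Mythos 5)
Your outline shares the paper's skeleton (finite order $\Rightarrow$ isolated zeros $\Rightarrow$ finite $\Sigma(S)$; a $1/\sqrt{\ve}$ divergent part identified off $\Sigma(S)$; integrability near characteristic points from the normal form; an index emerging from a boundary term around each $q_i$), but the technical route is different and, as written, has two genuine gaps. The paper never expands the curvature $2$-form directly: it writes $K^\ve\sigma^\ve=d\eta^\ve$ for the Levi-Civita connection $1$-form of the singular frame $e_1^\ve=X/|X|_\ve$, proves the uniform bound $\|\eta^\ve-\alpha/\sqrt{\ve}\|\le C/|X|$ with $\alpha=-\tfrac{\mathrm{div}(X)}{|X|}\omega|_S$ and $|X|^{-1}\in L^1$ (\cref{lem:levi_ve}, \cref{c:garofalo}), and then integrates by parts so that everything reduces to dominated convergence for $1$-forms plus the exact identity $\lim_{\delta\to0}\int_{C_\delta}\eta^\ve=2\pi\,\mathrm{ind}(q_i)$ at \emph{fixed} $\ve$. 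Your plan instead expands $K^\ve\sigma^\ve=\omega_{-1}/\sqrt{\ve}+\omega_0+O(\sqrt\ve)$ "with no concentration" off $\Sigma(S)$ and then invokes classical Gauss--Bonnet with geodesic curvature on small disks. The first gap is structural: if your bulk expansion had an $L^1$-uniform remainder near $q_i$, the limit in \eqref{eq:limit_measure_intro} could not contain a Dirac mass; the atom must come precisely from the failure of uniformity of that expansion near $\Sigma(S)$, and your proposal does not explain the mechanism by which it appears. In the paper this is transparent because the atom is produced by the boundary term $\int_{\partial B_\delta}\varphi\,\eta^\ve$ in Stokes' formula, i.e.\ by the winding of $X/|X|_\ve$ relative to a smooth frame, which has nothing to do with the $\ve\to0$ limit.

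The second gap is the one you yourself flag as hardest, and it is not merely hard but is the place where the finite-order hypothesis enters irreplaceably. You take $\ve\to0$ first on a fixed disk and then $r\to0$, asserting that the $O(1)$ part of $\int_{\partial D_r}k_g^\ve\,ds^\ve$ is $2\pi(1-\mathrm{ind}(q_i))$ because "the unit tangent rotates together with $X$"; this conflates the two limits and is not justified. The paper's order is the opposite ($\delta\to0$ at fixed $\ve$), and the crux is \cref{lem:technical_eta_ve}: one must exhibit circles $C_\delta$ shrinking to $q_i$ along which $\limsup_{\delta\to0}\int_{C_\delta}|\eta^\ve|<\infty$, which is proved by an explicit computation in the normal form coordinates of \cref{lem:normal_form_strong_body} (the bound $|X|^2\ge C(x^2+y^{2k})$ and the estimate of $\int_0^{2\pi}\delta^{k+1}|\sin t|^{k+1}(\delta^2\cos^2t+\delta^{2k}\sin^{2k}t)^{-1}dt$). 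The remark following that lemma gives a smooth vector field with an isolated zero for which these boundary integrals blow up, so the "total turning" heuristic genuinely fails without the quantitative normal-form input; since your proposal only names this estimate rather than supplying it, the extraction of the coefficient $2\pi\,\mathrm{ind}(q_i)$ is not established. The remaining ingredients you cite (finiteness of $\Sigma(S)$, $\int_S\mu_{-1}=0$ from Gauss--Bonnet, dominated convergence for the strong $C^1(S)^*$ topology) are consistent with the paper.
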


\begin{remark} We stress that characteristic points of finite order of degeneracy are automatically isolated (cf.\ Theorem~\ref{thm:finite_order_intro}), therefore, by compactness of $S$, $\Sigma(S)$ is finite. For non-compact surfaces $S$, characteristic points with finite order degeneracy form a discrete set, hence one can obtain the validity of the statement of Theorem~\ref{thm:gauss_bonn_lim_intro0}  by considering the convergence in $C^{1}_{c}(S)^{*}$, i.e., with test functions that have compact support. We stress that in this case \eqref{eq:limit_measure_intro} in particular implies that for $\varphi\in C^{1}_{c}(S)$ one has
  \begin{equation}\label{eq:limit_measure_intro_bis}
                \lim_{\ve\to 0}\int_{S}\varphi \left(K^\ve\sigma^\ve-\frac{\mu_{-1}}{\sqrt{\ve}}\right)=2\pi\sum_{i=1}^\ell\varphi(q_{i})\mathrm{ind}(q_i)\delta_{q_i},
            \end{equation}
            the sum in the right hand side being finite since $\varphi$ has compact support.
\end{remark}
\begin{remark} The measure $\mu_{-1}$ can be computed explicitly as in \cref{rmk:mu_-1}. Notice that the Riemannian Gauss-Bonnet theorem implies that for every $\varepsilon>0$ one has
$$\sqrt{\ve}\int_S K^\ve \sigma^\ve=2\pi \sqrt{\ve}\chi(S)$$
where $\chi(S)$ is the Euler characteristic of $S$. Hence taking the limit when $\ve \to 0$ one gets by \eqref{eq:Omega_-1} the following consequence, thus recovering the statements of \cite{Balogh1} (cf.\ also \cite{V23} and \cite{grong2024})
\begin{equation} \label{eq:int0}
\int_{S}\mu_{-1}=0.
\end{equation}\end{remark}
 
        \subsection{Characteristic points of finite order} \label{ss:finiteorder}
       
       The notion of characteristic point of finite order of degeneracy, which we discuss here, extends the concept of mildly degenerate characteristic point introduced in \cite{Rossi_deg} for the specific case of the Heisenberg group. This extension is not immediate, since in the Heisenberg case the definition relies in an essential way on the existence of a Lie group structure. In the general setting different techniques are required. 
       
       Our approach not only permits the extension of the notion to arbitrary spaces, but also yields finer structural results. Indeed, we show that points of finite order of degeneracy can only occur isolated, and we give sharp estimates on the norm of the characteristic vector field in their vicinity. Finally, we compute the index of the characteristic vector field at such singularity in terms of invariants.
       
        We start by defining horizontal kernel extensions at degenerate characteristic points, i.e., characteristic points $q$ where $\det (D_qX)=0$.
         \begin{defi}
        	Let $S$ be a surface embedded in a 3D contact sub-Riemannian manifold, with char\-acteristic vector field $X$. Let $q\in S$ be a degenerate characteristic point. A \emph{horizontal kernel extension} at $q$ is an arc-length parametrized horizontal smooth curve $\gamma:(-\ve,\ve)\to S$ satisfying 
        	\begin{equation}
        		\gamma(0)=q,\qquad \dot\gamma(0)\in \ker(D_q X).
        	\end{equation}
        \end{defi}
        Recall by Remark~\ref{r:ker1} that the char\-acteristic vector field $X$ at degenerate characteristic points satisfies $\det (D_qX)=0$ and $\dim \ker (D_q X)=1$. Hence by a direct application of the center manifold theorem one can prove the following result  (a more detailed proof is given in Section~\ref{ssec:defi1}).
   \begin{lemma} \label{l:novod} 
  	Let $q\in S$ be a degenerate characteristic point, i.e., $\det (D_qX)=0$. Then a horizontal kernel extension $\gamma:(-\ve,\ve)\to S$ always exists.
   \end{lemma}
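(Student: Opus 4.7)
The plan is to apply the center manifold theorem to $X$ viewed as a smooth vector field on $S$ vanishing at $q$, and then to verify horizontality separately by exploiting the definition of the characteristic vector field.

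First, I record the spectral picture of $D_qX$. By \cref{r:ker1}, $\ker(D_qX)$ is one-dimensional and $\mathrm{tr}(D_qX)=\mathrm{div}_qX\neq 0$; hence the other eigenvalue of $D_qX$ is nonzero and $T_qS=\ker(D_qX)\oplus E$, where $E$ is the eigenspace of the nonzero eigenvalue. Thus the linearization at $q$ has one center direction and one hyperbolic direction. The smooth center manifold theorem then produces a locally $X$-invariant $C^{\infty}$ submanifold $W^{c}\subset S$ of dimension one, passing through $q$, with $T_{q}W^{c}=\ker(D_qX)$. Choose any smooth parametrization $\gamma:(-\delta,\delta)\to W^{c}$ with $\gamma(0)=q$ and $\dot\gamma(0)$ a unit vector in $\ker(D_qX)\subset\mathcal{D}_{q}$.

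Next I show that $\gamma$ is horizontal, which is the step that is not given by the center manifold theorem itself. At $t=0$, since $q$ is a characteristic point, $T_{q}S=\mathcal{D}_q$, so $\dot\gamma(0)\in\mathcal{D}_q$ automatically. For $t\ne 0$ there are two cases. If $\gamma(t)\in\Sigma(S)$, then again $T_{\gamma(t)}S=\mathcal{D}_{\gamma(t)}$ and $\dot\gamma(t)$ is horizontal. If $\gamma(t)\notin\Sigma(S)$, the invariance of $W^c$ under the flow of $X$ means that $X$ restricts to a vector field on the one-dimensional manifold $W^c$, so $X(\gamma(t))=f(t)\dot\gamma(t)$ for some smooth function $f$ with $f(t)\ne 0$. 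Then $\dot\gamma(t)=f(t)^{-1}X(\gamma(t))$ lies in $T_{\gamma(t)}S\cap\mathcal{D}_{\gamma(t)}$ by the defining property of the characteristic vector field. In either case $\dot\gamma(t)\in\mathcal{D}_{\gamma(t)}$.

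Finally, since $\dot\gamma$ is continuous and horizontal with $\|\dot\gamma(0)\|_{g}=1$, the sub-Riemannian speed $\|\dot\gamma(t)\|_{g}$ is positive in a neighborhood of $0$, and a standard reparametrization yields an arc-length parametrization, giving the horizontal kernel extension. The only non-routine step is the horizontality check: the center manifold theorem only guarantees tangency to $\ker(D_qX)$ at $q$, so one really has to use the flow-invariance of $W^{c}$ together with the geometric characterization $\mathrm{span}\{X_p\}=T_pS\cap\mathcal{D}_p$ off $\Sigma(S)$ to conclude that the whole curve stays horizontal.
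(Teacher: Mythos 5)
Your proof is correct and follows essentially the same route as the paper: both invoke the center manifold theorem using $\mathrm{trace}(D_qX)=\mathrm{div}_q(X)\neq 0$ to produce a one-dimensional $X$-invariant submanifold tangent to $\ker(D_qX)$, and then conclude horizontality from the flow-invariance. Your version merely spells out the horizontality check (splitting into characteristic and non-characteristic points along the curve) that the paper states in one line.
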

      In general, a horizontal kernel extension is not unique. The definition of order of degeneracy of a characteristic point is based on the following observation.  
        \begin{prop} \label{prop:critical_curve_lambda_intro}
        	Let $q\in S$ be a degenerate characteristic point, i.e., $\det (D_qX)=0$. Then 
	\begin{itemize}
	\item[(i)] the order of vanishing\footnote{ Given a function $f:\R\to \R$ such that $f(0)=0$, the order of vanishing of $f$ at $0$ is $k:=\inf\left\{i\in\mathbb N : f^{(i)}(0)\neq 0 \right \}$.} at $t=0$ of the following smooth function
        	\begin{equation}\label{eq:lambda_0_intro}     
        		\lambda_\gamma:(-\ve,\ve)\to\mathbb R,\qquad \lambda_\gamma(t)=g(X,\dot\gamma(t)),
        	\end{equation}
        	 does not depend on the choice of the horizontal kernel extension $\gamma$, 
	\item[(ii)] if $\lambda_\gamma$ has finite order of vanishing $k\in \N$ at $t=0$, with $k$ odd, then the following quantity         
		\begin{equation}\label{eq:k-th_eigenvalue_intro}
        		\Lambda^{(k)}(q):=\left.\frac{d^k}{dt^k}\lambda_\gamma(t)\right|_{t=0},
        	\end{equation}
	does not depend on the choice of the horizontal kernel extension.
	\end{itemize}
        \end{prop}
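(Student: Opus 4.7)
The plan is to reduce both (i) and (ii) to the statement that the Taylor expansion at $t=0$ of any horizontal kernel extension $\gamma$ is uniquely determined by the choice of initial velocity $\dot\gamma(0)\in\ker(D_qX)$, and then to handle the two possible orientations of $\dot\gamma(0)$ by a direct symmetry argument.

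First, I would exploit that, away from $\Sigma(S)$, the rank-one distribution $\mathcal D\cap TS$ is spanned by $X$; hence any arc-length parametrized horizontal smooth curve in $S\setminus\Sigma(S)$ coincides, up to sign, with the arc-length reparametrization of an integral curve of $X$. A horizontal kernel extension $\gamma:(-\varepsilon,\varepsilon)\to S$ is therefore, off $t=0$, an integral curve of $X$ passing through the equilibrium $q$ in finite arc-length time and tangent at $q$ to the center direction $\ker(D_qX)$; its image is thus a one-dimensional local center manifold of $X$ at $q$. By \cref{r:ker1}, $D_qX$ has eigenvalues $0$ and $\mathrm{div}(X)|_q\neq 0$, placing us in the classical center--hyperbolic situation.

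Second, I would choose local coordinates $(x,y)$ on $S$ centred at $q$ with $\partial_x|_q\in\ker(D_qX)$ and write $X=a(x,y)\partial_x+b(x,y)\partial_y$; the assumption on the kernel then gives $a_x(0)=b_x(0)=0$ and $\beta:=b_y(0)\neq 0$. Assuming $\dot\gamma(0)$ equals a prescribed unit vector $v_0\in\ker(D_qX)$, the horizontality equation $\dot x\,b(\gamma)=\dot y\,a(\gamma)$ combined with the arc-length normalization $g(\dot\gamma,\dot\gamma)=1$ yields, order by order in $t$, a recursion for the Taylor coefficients of $\gamma$ at $t=0$ that is uniquely solvable thanks to $\beta\neq 0$; equivalently, this is the classical formal uniqueness of center manifolds for $X$. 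Consequently, if $\gamma_1,\gamma_2$ are two horizontal kernel extensions with $\dot\gamma_1(0)=\dot\gamma_2(0)$, then $\gamma_1-\gamma_2$ is flat at $t=0$ in local coordinates, and since $\lambda_\gamma(t)=g(X_{\gamma(t)},\dot\gamma(t))$ is a smooth functional of the $1$-jet of $\gamma$, also $\lambda_{\gamma_1}-\lambda_{\gamma_2}$ is flat at $t=0$. In particular these two functions share both their order of vanishing at $0$ and all their derivatives there, proving (i) and (ii) in the equal-orientation case.

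Finally, to handle the opposite orientation, I would pass to $\tilde\gamma(t):=\gamma(-t)$, which is again a horizontal kernel extension; direct computation gives
\[
\lambda_{\tilde\gamma}(t)=g\bigl(X_{\gamma(-t)},-\dot\gamma(-t)\bigr)=-\lambda_\gamma(-t),
\]
so $\lambda_{\tilde\gamma}^{(j)}(0)=(-1)^{j+1}\lambda_\gamma^{(j)}(0)$. This always preserves the order of vanishing, completing (i); it preserves the $k$-th derivative precisely when $k$ is odd, explaining the parity hypothesis of (ii). The main obstacle is the middle step, i.e., justifying rigorously that all horizontal kernel extensions with a common initial velocity share the same formal Taylor series at $q$; this can be carried out either via the order-by-order inversion of the horizontality/arc-length system using $\beta\neq 0$, or by invoking the formal uniqueness of smooth one-dimensional center manifolds.
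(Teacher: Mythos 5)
Your proposal is correct, but it follows a genuinely different route from the paper's. You reduce everything to the claim that two horizontal kernel extensions with the same initial velocity agree to infinite order at $t=0$, justified by observing that the image of a kernel extension is an embedded curve tangent to $X$ off $\Sigma(S)$ and tangent to $\ker(D_qX)$ at $q$, hence a locally invariant one-dimensional center manifold, and then invoking the formal uniqueness of $C^\infty$ center manifolds (equivalently, your order-by-order inversion of the invariance equation using $b_y(0)=\mathrm{div}_q(X)\neq 0$; this recursion does close, and it is the standard proof of that uniqueness, so you should either cite it or carry it out). This gives infinite order of contact between matching-orientation extensions, which is strictly more than needed. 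The paper instead runs a finite-order bookkeeping argument: it identifies each curve $\mathcal C=\gamma(-\ve,\ve)$ with the regular zero set $\{g(N,X)=0\}$ via \eqref{eq:C_omega_S} (regularity coming from $N(g(N,X))|_q=\mathrm{div}_q(X)\neq 0$), bounds the order of contact of two such curves from below by $\mathrm{order}(g(T,X)|_{\mathcal C},q)$ using \cref{lem:order=order_of_eq}, and transfers the order of vanishing from one curve to the other via \cref{lem:order_lem1} together with the identity \eqref{eq:g(tilde_N,X)} expressing $g(T',X)|_{\mathcal C'}$ as $g(T,X)$ times a non-vanishing factor. The paper's route is more self-contained and elementary (its contact-order lemmas are proved in the appendix), while yours is conceptually cleaner but outsources the crux to a classical dynamical-systems fact. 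Your handling of the orientation reversal through $\lambda_{\tilde\gamma}(t)=-\lambda_\gamma(-t)$, giving $\lambda_{\tilde\gamma}^{(j)}(0)=(-1)^{j+1}\lambda_\gamma^{(j)}(0)$, is exactly the mechanism behind the factor $(\pm 1)^{k+1}$ in the paper's proof and correctly isolates why part (ii) requires $k$ odd.
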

     \begin{defi}
    	We define the \emph{order of degeneracy} of the characteristic point $q$, denoted $\mathrm{ord}(q)$, as the order of vanishing at $t=0$ of the function $\lambda_\gamma$  in \eqref{eq:lambda_0_intro} for some horizontal kernel extension $\gamma$. 
	\end{defi}
	\begin{remark}
		We highlight here that if the order of degeneracy of $q$ is an odd integer $k\in\mathbb N$, the quantity $\Lambda^{(k)}(q)$ defined in \eqref{eq:k-th_eigenvalue_intro} is well-defined, while if the order of degeneracy of $q$ is an even integer, the quantity $\Lambda^{(k)}(q)$ is well-defined only up to a sign (cf.~Section \ref{sec:mildly_deg}).
	\end{remark}	
	We stress that for a degenerate characteristic point $q$ one necessarily  has $\mathrm{ord}(q)\geq 2$, cf. Lemma~\ref{l:almeno2}. 	
	By abuse of notation, we set $\mathrm{ord}(q)= 1$ for non-degenerate characteristic points $q$.  We say that a characteristic point has finite order of degeneracy if $\mathrm{ord}(q)<+\infty$.
	
   Characteristic points of finite order of degeneracy satisfy the following properties.
        \begin{theorem}\label{thm:finite_order_intro}
                Let $S$ be an oriented surface embedded in a 3D contact sub-Riemannian manifold, with characteristic vector field $X$. Let $q\in S$ be a characteristic point of order of degeneracy $k\geq 2$, then:
                \begin{itemize}
                    \item[(i)] $q$ is an isolated characteristic point,
                    \item[(ii)] there exist local coordinates $(x,y)$ near $q$ and a positive constant $C>0$ such that 
                \begin{equation}
                    q=(0,0),\qquad C^{-1}\sqrt{x^2+y^{2k}}\leq|X|\leq C\sqrt{x^2+y^{2k}},
                \end{equation} 
                \item[(iii)] the index of $q$ satisfies 
                \begin{equation}\label{eq:index_formula}
                    \mathrm{ind}(q)=\begin{cases}
                        \mathrm{sign}\left(\mathrm{div}_q(X)\Lambda^{(k)}(q)\right), & \text{if}\,\,k\,\,\text{odd},\\
                        0,\qquad &\text{if}\,\,k\,\,\text{even}.
                    \end{cases}
                \end{equation}
                \end{itemize}
            \end{theorem}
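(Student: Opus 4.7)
The strategy is to reduce $X$ to a normal form near $q$ by two successive coordinate changes, and then exploit an anisotropic dilation to read off both the norm estimate and the winding number.

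By Remark~\ref{r:ker1}, $D_qX$ has one zero eigenvalue with kernel $\ker D_qX$ and a second eigenvalue $\mu := \mathrm{div}_q(X) \neq 0$. I diagonalise $D_qX$ linearly on $T_qS$ and, in the resulting $(x,y)$ coordinates with $\ker D_qX = \mathrm{span}(\partial_y)$, perform a translation $\tilde x = x - \phi(y)$ that straightens a horizontal kernel extension $\gamma$ (produced by Lemma~\ref{l:novod}) to the $y$-axis; since $\phi'(0) = 0$, this second change preserves the diagonal form. Writing $X = a\,\partial_x + b\,\partial_y$, the invariance of $\gamma$ gives $a(x,y) = xA(x,y)$ with $A(0,0) = \mu$, and the definition of the order of degeneracy gives $b(0,y) = c(y) = \alpha y^k + O(y^{k+1})$, where $\alpha$ agrees with $\Lambda^{(k)}(q)$ up to a positive factor when $k$ is odd (by a short computation involving the arc-length reparametrisation of $\gamma$). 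The condition $b_x(0,0) = 0$ coming from the diagonalisation yields the refined Taylor expansion $b(x,y) = c(y) + x \cdot O(|x| + |y|)$.

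With these expansions I substitute the anisotropic dilations $(x,y) = (\varepsilon\xi, \varepsilon^{1/k}\eta)$ and find $a/\varepsilon \to \mu\xi$ and $b/\varepsilon \to \alpha\eta^k$, uniformly on the compact set $\{\xi^2 + \eta^{2k} = 1\}$. Since the limit vector $(\mu\xi, \alpha\eta^k)$ is nowhere zero on this curve, compactness gives the two-sided bound $|X|^2 \sim x^2 + y^{2k}$ on a punctured neighbourhood of $q$, establishing~(ii); the lower bound simultaneously shows that $X$ does not vanish near $q$ except at $q$, proving~(i). For~(iii), the anisotropic disc $\{x^2+y^{2k}<\varepsilon^2\}$ provides a Jordan curve bounding $q$ on which $X$ is nonzero; by homotopy invariance the index equals the winding of the limit model $X_0 = \mu x\,\partial_x + \alpha y^k\,\partial_y$ along the rescaled curve. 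Parametrising $\{\xi^2 + \eta^{2k} = 1\}$ and tracking signs shows directly that the winding is $\mathrm{sign}(\mu\alpha)$ when $k$ is odd, which equals $\mathrm{sign}(\mathrm{div}_q(X)\Lambda^{(k)}(q))$ via the identification of $\alpha$ with $\Lambda^{(k)}(q)$; whereas for $k$ even, $\eta^k \geq 0$ traps $X_0$ in a half-plane and forces winding zero.

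The principal obstacle is the bookkeeping in the first step: in the Heisenberg setting of \cite{Rossi_deg} a Lie group structure produces a very clean normal form for $X$, whereas in a general 3D contact manifold one must verify by hand that straightening $\gamma$ preserves the diagonal form of $D_qX$ and that the off-diagonal remainder $b(x,y) - c(y)$ is of size $|x|(|x| + |y|)$, not merely $|x|$. Without this refined control, the cross term would be comparable to $\alpha y^k$ in the anisotropic scaling and destroy the identification of both the norm and the model vector field. Identifying the leading coefficient $\alpha$ with the intrinsic invariant $\Lambda^{(k)}(q)$ up to a positive factor, through the arc-length parametrisation of $\gamma$, is also a minor but crucial point required to phrase the sign in~(iii) invariantly.
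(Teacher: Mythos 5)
Your proposal is correct and follows essentially the same route as the paper: it first establishes the normal form $X=x(\nu+O(|x|+|y|))\partial_x+(\alpha y^k+O(y^{k+1})+xO(|x|+|y|))\partial_y$ (Proposition~\ref{lem:normal_form_strong_body}, where the coordinates are built by flowing along an extension of $\dot\gamma$ rather than by your shear, which also makes $\alpha=\Lambda^{(k)}(q)/k!$ on the nose), then deduces $|X|^2\sim x^2+y^{2k}$ for (i)--(ii), and computes the index by homotoping to the model $\nu x\,\partial_x+\alpha y^k\,\partial_y$ exactly as in Lemma~\ref{lem:index_lem}. The anisotropic blow-up you use for the norm estimate and the homotopy is only a cosmetic repackaging of the paper's direct estimate of $|X|^2-(x^2+y^{2k})$ and of its interpolation $X_s$ obtained by scaling the remainder terms.
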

        
         We stress that characteristic points with finite order of degeneracy are isolated. In the analytic case, the converse also holds: isolated characteristic points have finite order of degeneracy, cf.\,\cref{prop:curve_of_char}.
\begin{remark}
            We stress that in Theorem~\ref{thm:finite_order_intro} one has $k\geq 2$. Actually one can include the case of non-degenerate characteristic points, i.e., when $k=1$. Indeed claims (i) and (ii) hold thanks to $\det(D_{q}X)\neq 0$, while in (iii) one should  suitably reinterpret the quantity $\Lambda^{(k)}(q)$ for $k=1$ as follows
            \begin{equation} \label{eq:rapporto}
            \Lambda^{(1)}(q)=\frac{\det(D_q X)}{\mathrm{div}_q (X)}=\frac{\det(D_q X)}{\mathrm{trace}( D_q X)}.
            \end{equation}
\end{remark}
 All the quantities $\Lambda^{(k)}(q)$ for all $k\geq 1$ can be defined and computed in a unified way through an auxiliary affine connection as follows.

 \begin{prop}\label{prop:v_0_intro}
 	Let $q\in S$ be a characteristic point of order  $k\geq 2$. Let $\nabla$ be the Levi-Civita connection on $TS$ associated with $g^1|_S$. There exists a neighbourhood $U\subset S$ of $q$ such that the following linear endomorphism is (fibrewise) diagonalizable
 	\begin{equation}
 		\nabla X:TS|_U\to TS|_U,\qquad Y\mapsto \nabla_YX.
 	\end{equation}
  In particular $\nabla X$ admits an eigenvector field $v_0\in\mathfrak X(U)$ satisfying $v_0|_q\in\ker D_q X$ and $|v_0|_q=1$. Then
 	\begin{equation}\label{eq:lambdagrande}
 		\Lambda^{(k)}(q)=\left.v_0^{(k-1)}\left(\frac{\det(\nabla X)}{\mathrm{trace}(\nabla X)}\right)\right|_q.
 	\end{equation}
 	Furthermore, $k$ is the smallest natural number such that the right-hand side of \eqref{eq:lambdagrande} is non zero.
 \end{prop}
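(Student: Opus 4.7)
The plan is to first verify the diagonalizability of $\nabla X$ near $q$ and select $v_0$, then to relate $f:=\det(\nabla X)/\mathrm{trace}(\nabla X)$ to the vanishing eigenvalue $\mu_0$, and finally to compute $\Lambda^{(k)}(q)$ by taking $\gamma$ inside a centre manifold and comparing iterated covariant derivatives.

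\textbf{Diagonalizability and construction of $v_0$.} Since $X_q=0$ and $\nabla$ is torsion-free, for every $Y\in T_qS$ one has $(\nabla_YX)|_q=[Y,X]_q=D_qX(Y)$, so $(\nabla X)|_q$ agrees with $D_qX$. By Remark~\ref{r:ker1}, $D_qX$ has the two distinct real eigenvalues $0$ and $\mathrm{div}_qX\neq 0$. Since the eigenvalues of $\nabla X$ are roots of a degree-two polynomial with smooth coefficients, on a small enough neighbourhood $U$ of $q$ the endomorphism $\nabla X$ has two simple smooth real eigenvalues $\mu_0,\mu_1$ with $\mu_0(q)=0$ and $\mu_1(q)=\mathrm{div}_qX$. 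Simplicity forces smooth eigenprojections, so a smooth unit section of $\ker(\nabla X-\mu_0\,\mathrm{id})$ provides the required $v_0\in\mathfrak X(U)$.

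\textbf{Reduction of $f$ to $\mu_0$.} From $\mu_0+\mu_1=\mathrm{trace}(\nabla X)$ and $\mu_0\mu_1=\det(\nabla X)$, after possibly shrinking $U$ we may write
\[
f=\frac{\mu_0\mu_1}{\mu_0+\mu_1}=\mu_0\cdot h,\qquad h:=\frac{\mu_1}{\mu_0+\mu_1},
\]
with $h$ smooth and $h(q)=1$ since $\mu_1(q)\neq 0$. By Leibniz, whenever $v_0^{(i)}(\mu_0)|_q=0$ for all $i<j$, one has $v_0^{(j)}(f)|_q=v_0^{(j)}(\mu_0)|_q$. Hence the orders of vanishing of $f$ and $\mu_0$ along $v_0$ at $q$ coincide, and the leading non-zero derivatives match.

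\textbf{Identification of $\Lambda^{(k)}(q)$.} By Proposition~\ref{prop:critical_curve_lambda_intro} the value of $\Lambda^{(k)}(q)$ is independent of the horizontal kernel extension, so I choose $\gamma$ to be an arc-length parametrization of a centre manifold $W^c$ of $X$ at $q$ (cf.\,Lemma~\ref{l:novod}); recall that $W^c$ is one-dimensional, tangent to $\ker D_qX$ at $q$, and $X$-invariant. Along $\gamma$ the pointwise identity $X|_{\gamma(t)}=\lambda_\gamma(t)\dot\gamma(t)$ holds. Iterating the Levi-Civita derivative along $\gamma$ via Leibniz and using $\lambda_\gamma(0)=\cdots=\lambda_\gamma^{(k-1)}(0)=0$ collapses the sum to
\[
\nabla_{\dot\gamma}^{k}X\big|_{t=0}=\lambda_\gamma^{(k)}(0)\,v_0|_q.
\]
On the other hand, iterating the global relation $\nabla_{v_0}X=\mu_0 v_0$ on $U$ together with the vanishing of $v_0^{(i)}(\mu_0)|_q$ for $i<k-1$ gives, by a straightforward induction,
\[
\nabla_{v_0}^{k}X\big|_q=v_0^{(k-1)}(\mu_0)|_q\,v_0|_q.
\]
A product-rule induction shows that, for any smooth extension $V$ of $v_0|_q$, the quantity $\nabla_V^{n}X|_q-\nabla_{v_0}^{n}X|_q$ is a sum of terms of the form $D_qX(W)$ for various $W\in T_qS$, and hence lies in $\mathrm{Im}(D_qX)=\mathrm{span}(v_1|_q)$. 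Since $\mathrm{span}(v_0|_q)\cap\mathrm{span}(v_1|_q)=\{0\}$ and both displayed vectors are proportional to $v_0|_q$, projecting modulo $\mathrm{Im}(D_qX)$ yields $\lambda_\gamma^{(k)}(0)=v_0^{(k-1)}(\mu_0)|_q=v_0^{(k-1)}(f)|_q$, proving \eqref{eq:lambdagrande}. The same identity applied to $n<k$ gives $v_0^{(n-1)}(f)|_q=\lambda_\gamma^{(n)}(0)=0$, so $k$ is minimal as claimed.

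\textbf{Main obstacle.} The delicate step is the product-rule induction controlling $\nabla_V^{n}X|_q-\nabla_{v_0}^{n}X|_q$: one must verify step by step that every correction produced by swapping $V$ for $v_0$ is of the form $D_qX(W)$, so that the discrepancy lies in $\mathrm{Im}(D_qX)$ and disappears after projection to $\ker D_qX=\mathrm{span}(v_0|_q)$. The algebraic core of this argument is the tower of identities obtained by repeatedly applying $\nabla_{v_0}$ to $\nabla_{v_0}X=\mu_0 v_0$, all of whose lower-order contributions vanish at $q$ under the degeneracy hypothesis $v_0^{(i)}(\mu_0)|_q=0$ for $i<k-1$.
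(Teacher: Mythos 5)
Your setup (diagonalizability of $\nabla X$ near $q$, construction of $v_0$, and the reduction of $\det(\nabla X)/\mathrm{trace}(\nabla X)$ to the vanishing eigenvalue $\mu_0$ via $f=\mu_0\cdot\mu_1/(\mu_0+\mu_1)$) matches the paper, and the identity $\nabla^{k}_{\dot\gamma}X|_{t=0}=\lambda_\gamma^{(k)}(0)\,\dot\gamma(0)$ along the centre manifold is correct. But the step you yourself flag as the ``main obstacle'' is not just delicate — as stated it is false for $k\geq 3$. Writing $V=v_0+W$ with $W|_q=0$, the expansion of $\nabla_V^3X|_q-\nabla_{v_0}^3X|_q$ contains the cross term $\nabla_{v_0}\nabla_W\nabla_{v_0}X|_q=\nabla_{v_0}\bigl(W(\mu_0)\,v_0+\mu_0\nabla_Wv_0\bigr)\big|_q$, whose surviving contribution $v_0\bigl(W(\mu_0)\bigr)\big|_q\,v_0|_q$ is a multiple of $v_0|_q$, not an element of $\mathrm{Im}(D_qX)=\mathrm{span}(v_1|_q)$. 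It does not vanish in general: $v_0(W(\mu_0))|_q=(v_0W^i)|_q(\partial_i\mu_0)|_q$, and neither factor is controlled by the hypotheses — $(v_0W)|_q$ measures the second-order deviation of the centre manifold from the integral curve of $v_0$, and $v_1(\mu_0)|_q$ is unconstrained by the order of degeneracy (which only governs derivatives of $\mu_0$ in the $v_0$ direction). So projecting modulo $\mathrm{Im}(D_qX)$ does not kill the discrepancy, and the identification $\lambda_\gamma^{(k)}(0)=v_0^{(k-1)}(\mu_0)|_q$ is not established.

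What is missing is precisely a control on the order of contact between the centre manifold $\mathcal C$ and the integral curve $\mathcal C'$ of $v_0$: if that order exceeds $\mathrm{order}(\mu_0|_{\mathcal C},q)$, then the two leading derivatives agree and your computation can be repaired. This is exactly what the paper supplies, by a different route: it first shows (via the difference tensor of two affine connections and \cref{lem:order_orbits}, \cref{lem:order_lem1}, \cref{lem:order=order_of_eq}) that the order of $\lambda_0$ along $\mathcal C$ and the leading coefficient are independent of the auxiliary connection, and then verifies the formula for a flat connection in the normal-form coordinates of \cref{lem:normal_form_strong_body}, where the integral curve of $v_0'$ literally coincides with the horizontal kernel extension $y\mapsto(0,y)$ and everything is explicit. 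Your argument does go through for $k=2$ (the $n=2$ discrepancy genuinely lies in $\mathrm{Im}(D_qX)$), but for general $k$ you need either the paper's order-of-contact estimate $\mathrm{order}(\mathcal C\cap\mathcal C',q)>\mathrm{order}(\lambda_0|_{\mathcal C},q)$ or an equivalent substitute before the projection argument can close.
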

 \noindent
 Notice that for $k=1$ the vector $v_{0}$ is not defined but \eqref{eq:lambdagrande} makes sense and coincides with \eqref{eq:rapporto}.
 
As it will be clear from the proof (see Section~\ref{app:formula_lambda}), the previous result is actually independent on the choice of the affine connection $\nabla$ used to compute \eqref{eq:lambdagrande}.
 
 \subsection{Consequences: integrability} 
  A direct consequence of item $(ii)$ of \cref{thm:finite_order_intro}, being the function $(x^2+y^{2k})^{-1/2}$ integrable near the origin in $\R^{2}$ for $k\geq 1$, is the following.
\begin{corollary}\label{c:garofalo}
	Let $S$ be a surface embedded in a 3D contact sub-Riemannian manifold with a smooth measure $\sigma$. Let $X$ be the characteristic vector field. Assume all characteristic points have finite order of degeneracy, then
	\begin{equation}\label{eq:L^1_cond_body}
		\frac{1}{|X|}\in L^1_{loc}(S,\sigma).
	\end{equation}
\end{corollary}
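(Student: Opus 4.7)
The plan is to deduce the local integrability directly from Theorem~\ref{thm:finite_order_intro}, reducing the question to a Lebesgue-integrability statement for an explicit model function in $\R^{2}$.

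First, I would localise. Since, by Theorem~\ref{thm:finite_order_intro}(i), every characteristic point of finite order of degeneracy is isolated, any compact subset $K\subset S$ contains only finitely many characteristic points $q_{1},\dots,q_{m}$. Away from $\{q_{1},\dots,q_{m}\}$ the characteristic vector field does not vanish, so $1/|X|$ is continuous (in fact smooth), hence integrable on any compact piece of $S\setminus\{q_{1},\dots,q_{m}\}$. It therefore suffices to prove that for each $q_{i}$ there exists a neighbourhood $U_{i}\subset S$ of $q_{i}$ with $\int_{U_{i}} |X|^{-1}\,d\sigma<\infty$; a finite subcover argument then yields the claim for $K$ and hence the statement $1/|X|\in L^{1}_{loc}(S,\sigma)$.

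Next I would exploit the size estimate in Theorem~\ref{thm:finite_order_intro}(ii). Fix $q=q_{i}$ with order of degeneracy $k\geq 1$ and choose local coordinates $(x,y)$ on a neighbourhood $U\ni q$ with $q=(0,0)$ such that
\begin{equation}
C^{-1}\sqrt{x^{2}+y^{2k}}\leq |X|\leq C\sqrt{x^{2}+y^{2k}}.
\end{equation}
Since $\sigma$ is a smooth measure, in these coordinates it has the form $\rho(x,y)\,dx\,dy$ with $\rho$ smooth and positive, and in particular bounded above on a smaller neighbourhood of the origin. Thus the claim reduces to the purely Euclidean assertion that, for some $r>0$,
\begin{equation}
I_{k}:=\int_{\{x^{2}+y^{2}<r^{2}\}}\frac{dx\,dy}{\sqrt{x^{2}+y^{2k}}}<\infty.
\end{equation}

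Finally I would verify the integrability of the model $I_{k}$. For $k=1$ this is the classical integrability of $1/r$ in dimension two. For $k\geq 2$ I would perform the change of variables $v=\mathrm{sign}(y)|y|^{k}$, under which $dy=k^{-1}|v|^{1/k-1}dv$, reducing the integral to
\begin{equation}
\frac{1}{k}\int \frac{|v|^{1/k-1}}{\sqrt{x^{2}+v^{2}}}\,dx\,dv,
\end{equation}
and then pass to polar coordinates $(x,v)=(r\cos\theta,r\sin\theta)$. The radial integral becomes $\int_{0}^{r}\rho^{1/k-1}d\rho$, which converges since $1/k>0$, and the angular integral becomes $\int_{0}^{2\pi}|\sin\theta|^{1/k-1}d\theta$, which converges because near $\theta=0,\pi$ one has $|\sin\theta|^{1/k-1}\sim|\theta|^{1/k-1}$ and $1/k-1>-1$. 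Hence $I_{k}<\infty$ for every $k\geq 1$, concluding the argument.

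There is no real obstacle here: the delicate work has already been done in Theorem~\ref{thm:finite_order_intro}(i)--(ii), which pins down the exact blow-up rate of $|X|$ near characteristic points; the corollary is a routine two-dimensional integrability check once the isolated nature of the singularities and the sharp two-sided bound on $|X|$ are available.
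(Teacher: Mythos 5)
Your proof is correct and follows essentially the same route as the paper, which deduces the corollary directly from Theorem~\ref{thm:finite_order_intro}(i)--(ii) and the local integrability of $(x^{2}+y^{2k})^{-1/2}$ near the origin in $\R^{2}$. The only difference is that you verify that model integral explicitly, whereas the paper asserts it without computation.
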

This theorem extends \cite[Theorem~4.5]{Rossi_deg}, whose proof exploits the isometries of the Heisenberg group. Our approach relies on the normal form of \cref{lem:normal_form_strong_body}.

\cref{c:garofalo} is related with integrability properties of the sub-Riemannian mean curvature $\mathcal{H}$ of $S$, which can be defined as the limit of mean curvatures of the Riemannian approximations, see \cite{BB24}. The estimate $|\mathcal{H}|\leq C |X|^{-1}$ follows from a technical but standard observation, cf.\ \cite[Section 2.1]{Rossi_deg} and \cite[Prop. 3.5]{garofalo1}. In the analytic case, we prove the following.
\begin{corollary} \label{c:garofalo2}
	Let $S$ be an analytic surface embedded in an analytic 3D contact sub-Riemannian manifold with a smooth measure $\sigma$. 
	Let $\mathcal{H}$  be the sub-Riemannian mean curvature. If every characteristic point is isolated then
	\begin{equation}\label{eq:L^1_cond_body}
		\mathcal{H}\in L^1_{loc}(S,\sigma).
	\end{equation}
\end{corollary}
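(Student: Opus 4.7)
The plan is to chain three ingredients already in place: (a) in the analytic setting, an isolated characteristic point automatically has finite order of degeneracy (this is the content of \cref{prop:curve_of_char}, anticipated in the remark just after \cref{thm:finite_order_intro}); (b) \cref{c:garofalo} then upgrades this to $1/|X|\in L^{1}_{loc}(S,\sigma)$; and (c) the pointwise domination $|\mathcal H|\le C|X|^{-1}$ recalled before the statement transfers the integrability from $1/|X|$ to $\mathcal H$.

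Concretely, I would work locally around an arbitrary point $p\in S$. If $p\notin\Sigma(S)$, then after shrinking we find a relatively compact neighbourhood $U$ of $p$ on which $X$ does not vanish; there $\mathcal H$ is smooth and local integrability is immediate. If $p\in\Sigma(S)$, then by hypothesis $p$ is isolated, so we may choose a relatively compact $U\ni p$ with $\Sigma(S)\cap\overline U=\{p\}$. Invoking analyticity, \cref{prop:curve_of_char} gives $\mathrm{ord}(p)<+\infty$, and \cref{c:garofalo} applied on $U$ yields $1/|X|\in L^{1}(U,\sigma)$. The pointwise estimate $|\mathcal H(q)|\le C|X(q)|^{-1}$ holds on $U\setminus\{p\}$ by \cite[Sec.\ 2.1]{Rossi_deg} and \cite[Prop.\ 3.5]{garofalo1}, and since $\{p\}$ is $\sigma$-negligible we obtain
\begin{equation*}
\int_U |\mathcal H|\,d\sigma \;\le\; C\int_U \frac{1}{|X|}\,d\sigma \;<\;+\infty.
\end{equation*}
Covering any compact $K\subset S$ by finitely many such neighbourhoods gives $\mathcal H\in L^{1}_{loc}(S,\sigma)$.

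The main obstacle does not lie inside the above argument, which is a short composition of quoted results, but rather in \cref{prop:curve_of_char}, which performs the analytic lifting from the topological condition \emph{isolated} to the analytic condition \emph{finite order of degeneracy}. I expect this to rest on the principle that a nonzero real-analytic function cannot vanish to all orders: if the smooth function $\lambda_\gamma$ of \eqref{eq:lambda_0_intro} vanished to infinite order at $t=0$ along an analytic horizontal kernel extension $\gamma$, then $\lambda_\gamma\equiv 0$, and the entire image of $\gamma$ would consist of characteristic points, contradicting the isolation of $p$.
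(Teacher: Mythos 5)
Your argument for the corollary is correct and is essentially the paper's own proof: analyticity plus isolatedness gives finite order of degeneracy via \cref{prop:curve_of_char}, then \cref{c:garofalo} gives $|X|^{-1}\in L^1_{loc}$, and the quoted bound $|\mathcal H|\le C|X|^{-1}$ concludes (your extra localization is harmless). One small caveat on your closing speculation: the paper's proof of \cref{prop:curve_of_char} does not argue via $\lambda_\gamma$ along a horizontal kernel extension (a center manifold need not be analytic, and $\lambda_\gamma\equiv 0$ alone would not force $X$ to vanish along $\gamma$), but instead uses the analytic eigenvector field $v_0$ of $\nabla X$ and its eigenvalue $\lambda_0$, showing $X$ is parallel along the integral curve of $v_0$ and vanishes at $q$, hence vanishes identically there.
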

The previous result extends \cite[Theorem 4.7]{Rossi_deg} concerning a conjecture in \cite{garofalo1}.

\subsection{Consequences: Euler characteristic}
Combining Poincar\'e-Hopf theorem with $(iii)$ of \cref{thm:finite_order_intro} we can compute the Euler characteristic of a surface in terms of the  invariants $\Lambda^{(k)}(q)$.
\begin{corollary}\label{c:chidiv}
    Let $S$ be a compact oriented surface embedded in a 3D contact sub-Riemannian manifold. Assume that every characteristic point has finite order of degeneracy. Then the characteristic set is finite $\Sigma(S)=\{q_1,\dots,q_\ell\}$, and 
    \begin{equation}\label{eq:lanuova}
        \chi(S)=\sum_{k_i\,\mathrm{odd}}\mathrm{sign}\left(\mathrm{div}_{q_i}(X)\Lambda^{(k_i)}(q_i)\right),
    \end{equation}
    where $k_i=\mathrm{ord}(q_i)$, and $\Lambda^{(k_i)}(q_i)$ are the invariants defined in \eqref{eq:k-th_eigenvalue_intro}.
\end{corollary}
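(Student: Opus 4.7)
The plan is a direct combination of the Poincaré--Hopf theorem with the structural results of Theorem~\ref{thm:finite_order_intro}. First I would invoke Theorem~\ref{thm:finite_order_intro}(i): since every characteristic point has finite order of degeneracy, every $q \in \Sigma(S)$ is isolated, and compactness of $S$ then yields finiteness of the characteristic set, $\Sigma(S) = \{q_1,\dots,q_\ell\}$.

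Next, I would apply the Poincaré--Hopf theorem to the characteristic vector field $X \in \mathfrak{X}(S)$. By definition, $X$ vanishes exactly on $\Sigma(S)$, and by the previous step these zeros are isolated in a compact surface. Hence
\begin{equation}
    \chi(S) = \sum_{i=1}^{\ell} \mathrm{ind}(q_i).
\end{equation}
Here one should briefly check that $X$ is genuinely smooth on $S$ (and not merely defined up to sign as a line field) so that Poincaré--Hopf applies without ambiguity. This is ensured by the orientation of $S$, which is used to fix the sign of $X$ canonically along with the sub-Riemannian metric, as recalled in the introduction.

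Finally, I would substitute the index formula from Theorem~\ref{thm:finite_order_intro}(iii). Writing $k_i := \mathrm{ord}(q_i)$, one has
\begin{equation}
    \mathrm{ind}(q_i) = \begin{cases}
        \mathrm{sign}\bigl(\mathrm{div}_{q_i}(X)\,\Lambda^{(k_i)}(q_i)\bigr), & k_i \text{ odd},\\
        0, & k_i \text{ even},
    \end{cases}
\end{equation}
so the even-order terms drop out and only odd-order contributions survive, yielding \eqref{eq:lanuova}. The only subtlety is the treatment of non-degenerate points $k_i = 1$: there one should use the reinterpretation \eqref{eq:rapporto}, for which $\mathrm{sign}(\mathrm{div}_{q_i}(X)\,\Lambda^{(1)}(q_i)) = \mathrm{sign}(\det(D_{q_i}X))$ recovers the classical Poincaré--Hopf index of a non-degenerate zero. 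There is no real obstacle here beyond ensuring the orientation convention is consistent so that Poincaré--Hopf can indeed be applied globally to $X$; this is however built into the definition of the characteristic vector field.
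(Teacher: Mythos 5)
Your proof is correct and follows exactly the paper's own route: the paper derives Corollary~\ref{c:chidiv} precisely by combining the Poincar\'e--Hopf theorem with items $(i)$ and $(iii)$ of Theorem~\ref{thm:finite_order_intro}, with the $k_i=1$ case handled via \eqref{eq:rapporto} as you note. Nothing is missing.
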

 
\begin{remark} We stress that if all characteristic points are non-degenerate, i.e., $k_i=1$ for all $i=1,\ldots,\ell$, then, thanks to \eqref{eq:rapporto}, formula \eqref{eq:lanuova} reduces to the classical identity
\begin{equation}\label{eq:classico}
\chi(S)=\sum_{i=1}^\ell\mathrm{sign}\left(\det (D_{q_i}X)\right).
    \end{equation}
\end{remark}

 Going back to our main Theorem~\ref{thm:gauss_bonn_lim_intro0}, the atomic measure appearing in \eqref{eq:limit_measure_intro} is obtained as limit of the Gaussian curvature measures $K^\ve\sigma^\ve$ of the Riemannian surfaces $(S,g^\ve|_S)$. It is therefore natural to compare their masses with the pointwise limit of the Gaussian curvature $K^\ve$ at characteristic points. 
 
 However, it is known that for $\ve\to 0$ the Gaussian curvature $K^\ve$ at characteristic points diverges, and it converges only after an appropriate rescaling. More precisely, let $q\in S$ be a characteristic point, it was shown in \cite{Daniele} that 
                \begin{equation}
                    \widehat K(q):=\lim_{\ve\to 0}\ve^2 K^\ve(q)=-1+\frac{\det(D_qX)}{\mathrm{trace}(D_qX)^2}.
                \end{equation}
One can compare such a limit with our results by extending the function $ \widehat K$ near characteristic points through the following definition
\begin{equation}\label{eq:hat_K}
    \widehat K(q)=-1+\frac{\det (\nabla X|_{q})}{\mathrm{trace}^2(\nabla X|_{q})},
\end{equation}
where $\nabla$ is the Levi-Civita connection of the metric $g^1|_S$. In particular, restating Corollary~\ref{c:chidiv}, one can compute the Euler characteristic in terms of this curvature invariant $\widehat K$ and its derivatives.
\begin{prop}
   Let $S$ be an oriented surface embedded in a 3D contact sub-Riemannian manifold with characteristic vector field $X$. Let $q\in S$ be a characteristic point of finite odd order of degeneracy $k\in \N$, and let $v_0$ is the eigenvector field defined in \cref{prop:v_0_intro}. Then 
    \begin{equation}
        \mathrm{div}_q(X)\Lambda^{(k)}(q)=v_0^{(k-1)}\left(\widehat{K}+1\right)\Big|_{q},
    \end{equation}
    where $\widehat K$ is the function defined in \eqref{eq:hat_K}. If $S$ is compact and all characteristic points are of finite order of degeneracy, then 
    \begin{equation}\label{eq:chiodd}
                    \chi(S)=\sum_{k_i\,\,\text{odd}}\mathrm{sign}\left(v_0^{(k_i-1)}\left(\widehat{K}+1\right)\Big|_{q_{i}}\right),
                \end{equation} 
                where $\{k_1,\dots,k_\ell\}$ are the orders of the characteristic points $\{q_1,\dots,q_\ell\}$.
\end{prop}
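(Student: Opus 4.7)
The plan is to reduce both assertions to the formula $\Lambda^{(k)}(q) = v_{0}^{(k-1)}(\det(\nabla X)/\mathrm{trace}(\nabla X))|_{q}$ from Proposition~\ref{prop:v_0_intro}, via a single application of the Leibniz rule along the flow of $v_{0}$. I would abbreviate $A := \det(\nabla X)$ and $T := \mathrm{trace}(\nabla X)$, both smooth scalar functions defined in a neighbourhood of $q$ through the Levi-Civita connection of $g^{1}|_{S}$. Since $T(q) = \mathrm{div}_{q}(X) \neq 0$ by Remark~\ref{r:ker1}, the quotients $A/T$ and $A/T^{2}$ are smooth near $q$, and definition~\eqref{eq:hat_K} rewrites as $\widehat K + 1 = A/T^{2} = (A/T)(1/T)$.

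The key step is then to differentiate this product $k-1$ times along $v_{0}$ at $q$. Proposition~\ref{prop:v_0_intro} characterizes $k$ as the smallest positive integer for which $v_{0}^{(k-1)}(A/T)|_{q} \neq 0$, and hence $v_{0}^{(j)}(A/T)|_{q} = 0$ for every $0 \leq j \leq k-2$. Applying the Leibniz rule for iterated vector-field derivatives to $(A/T)(1/T)$, every summand containing a derivative of $A/T$ of order strictly less than $k-1$ drops out, and only the extreme term survives, giving
\begin{equation}
v_{0}^{(k-1)}(\widehat K + 1)|_{q} = v_{0}^{(k-1)}(A/T)|_{q} \cdot (1/T)(q) = \frac{\Lambda^{(k)}(q)}{\mathrm{div}_{q}(X)}.
\end{equation}
This is the core algebraic identity; $v_{0}^{(k-1)}(\widehat K + 1)|_{q}$ and $\mathrm{div}_{q}(X)\Lambda^{(k)}(q)$ differ by the nonvanishing factor $\mathrm{div}_{q}(X)^{2} > 0$, hence have identical sign at every characteristic point.

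For the Euler characteristic formula, this sign equality is exactly what is needed: substituting $\mathrm{sign}(v_{0}^{(k-1)}(\widehat K + 1)|_{q})$ for $\mathrm{sign}(\mathrm{div}_{q}(X)\Lambda^{(k)}(q))$ in \eqref{eq:lanuova} of Corollary~\ref{c:chidiv} yields~\eqref{eq:chiodd} immediately. The whole argument is essentially algebraic, and the only delicate bookkeeping is tracking which summands in the Leibniz expansion vanish at $q$---a point directly supplied by the order-of-vanishing statement in Proposition~\ref{prop:v_0_intro}. I do not foresee any substantial obstacle in making these steps rigorous.
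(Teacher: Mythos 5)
Your decomposition $\widehat K+1=(A/T)\cdot(1/T)$ and the Leibniz argument killing all but the extreme term are exactly the paper's computation, and they are correct: Proposition~\ref{prop:formula_lambda} does give $v_0^{(j)}(A/T)|_q=0$ for $0\leq j\leq k-2$, so indeed
\begin{equation}
v_0^{(k-1)}\bigl(\widehat K+1\bigr)\Big|_{q}=v_0^{(k-1)}\!\left(\frac{\det\nabla X}{\mathrm{trace}\,\nabla X}\right)\bigg|_{q}\cdot\frac{1}{\mathrm{trace}(\nabla X|_q)}=\frac{\Lambda^{(k)}(q)}{\mathrm{div}_q(X)}.
\end{equation}
However, at this point you stop and concede that your right-hand side differs from the asserted $\mathrm{div}_q(X)\,\Lambda^{(k)}(q)$ by the positive factor $\mathrm{div}_q(X)^2$, settling for equality of signs. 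That proves the Euler characteristic formula \eqref{eq:chiodd}, but it does not prove the first display of the proposition, which is an exact equality, not a sign statement. As written, your argument establishes a strictly weaker claim.

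The missing ingredient is Proposition~\ref{prop:div=pm}: the normalization of the sub-Riemannian characteristic vector field gives $\mathrm{div}(X)^2+|X|^2=1$ on $S$, and since $|X|_q=0$ at a characteristic point, $\mathrm{div}_q(X)=\pm1$. Hence $1/\mathrm{div}_q(X)=\mathrm{div}_q(X)$ and your formula becomes exactly the claimed identity (equivalently, your factor $\mathrm{div}_q(X)^2$ equals $1$, not merely a positive number). This is precisely the step the paper flags as crucial in its own proof. It is a one-line fix, but without it the stated proposition is not proved.
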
  
Again, we stress that if all characteristic points are non-degenerate  $k_{i}=1$ for all $i$ and the formula \eqref{eq:chiodd} reduces to \eqref{eq:classico} since $\mathrm{sign}(\widehat{K}+1)=\mathrm{sign}(\det D_{q}X)$.

 \subsection{An example: the horizontal plane in the Heisenberg group} It is interesting to illustrate the discrete contribution to the limiting measure in Theorem~\ref{thm:gauss_bonn_lim_intro0} for the explicit example of the plane $\{z=0\}$ in the three-dimensional Heisenberg group.
 
         Let $(\mathbb R^3,\mathcal D,g)$ be the Heisenberg group: the contact distribution $\mathcal D$ is expressed as the kernel of a differential form $\omega$ which, in cylindrical coordinates $x=r\cos\theta, y=r\sin\theta, z$, reads 
     	\begin{equation}\label{eq:example_form_metrics0}
     		\omega=dz+\frac{r^2}{2}d\theta.
     	\end{equation}
     	The extended metrics \eqref{eq:g_ve_properties0}, whose restrictions to $\mathcal D$ coincide with the sub-Riemannian metric $g$, have the expression 
     	\begin{equation}
     		g^\ve=dr^2+r^2d\theta^2+\frac{1}{\ve}\omega\otimes \omega.
     	\end{equation}
     	Let $S=\{(x,y,z)\mid z=0\}\subset \R^{3}$. Substituting the expression of $\omega$ into the one of $g^\ve$ and imposing $dz=0$ we obtain
     	\begin{equation}
     		g^\ve|_S=dr^2+f_\ve(r)^2d\theta^2,\qquad\text{where}\qquad  f_\ve(r)=r\sqrt{1+\frac{r^2}{4\ve}}.
     	\end{equation}
     	A standard computation shows that 
     	\begin{equation}\label{eq:acca1}
     		K^\ve\sigma^\ve=-f''_\ve(r)drd\theta=-\frac{r \left(r^2 + 6\varepsilon\right)}{\sqrt{\varepsilon} \left(r^2 + 4\varepsilon\right)^{3/2}}drd\theta.
     	\end{equation}
     	One can then explicitly  compute the limit 
     	\begin{equation}\label{eq:acca2}
     		\mu_{-1}:=\lim_{\ve\to0}\sqrt{\ve}K^\ve\sigma^\ve=-drd\theta.
     	\end{equation}
	In this specific example, the statement of Theorem~\ref{thm:gauss_bonn_lim_intro0} can be proved by a direct computation and shows that the discrete part of the measure is a delta in the origin, which is characteristic.
\begin{lemma}
	For every $\varphi\in C^1_{c}(\mathbb R^2)$, the following equality holds 
	\begin{equation}
		\lim_{\ve\to 0}\int_{\mathbb R^2}\varphi\left(K^\ve\sigma^\ve-\frac{\mu_{-1}}{\sqrt\ve}\right)
		=2\pi\varphi(0),
	\end{equation}
	where the measures $K^\ve\sigma^\ve$ and  $\mu_{-1}$ are defined as in \eqref{eq:acca1} and \eqref{eq:acca2}
\end{lemma}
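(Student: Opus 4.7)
The approach is a direct computation that exploits the fact that the integrand is an exact $r$-derivative. Using \eqref{eq:acca1} and \eqref{eq:acca2}, rewrite
\begin{equation*}
K^\ve\sigma^\ve-\frac{\mu_{-1}}{\sqrt\ve}=\bigl(-f''_\ve(r)+\tfrac{1}{\sqrt\ve}\bigr)dr\,d\theta=-h'_\ve(r)\,dr\,d\theta,\qquad h_\ve(r):=f'_\ve(r)-\tfrac{r}{\sqrt\ve}.
\end{equation*}
A short calculation gives $f'_\ve(r)=(r^2+2\ve)/(\sqrt\ve\sqrt{r^2+4\ve})$; rationalizing the numerator of $h_\ve$ then produces the closed form
\begin{equation*}
h_\ve(r)=\frac{4\ve^{3/2}}{\bigl(r^2+2\ve+r\sqrt{r^2+4\ve}\bigr)\sqrt{r^2+4\ve}}.
\end{equation*}
From this formula one reads off two key facts: $h_\ve$ is non-negative and strictly decreasing on $[0,\infty)$ with $h_\ve(0)=1$, so $0\le h_\ve\le 1$ uniformly in $\ve$; and for every fixed $r>0$, $h_\ve(r)\to 0$ as $\ve\to 0$.

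Setting $\psi(r,\theta):=\varphi(r\cos\theta,r\sin\theta)$, which is $C^1$ on $[0,\infty)\times[0,2\pi)$ with compact $r$-support, integration by parts in $r$ yields
\begin{equation*}
\int_0^{2\pi}\!\!\int_0^\infty\psi\,(-h'_\ve)\,dr\,d\theta=\int_0^{2\pi}\varphi(0)\,h_\ve(0)\,d\theta+\int_0^{2\pi}\!\!\int_0^\infty(\partial_r\psi)\,h_\ve\,dr\,d\theta,
\end{equation*}
the boundary term at $r=\infty$ having vanished by compact support. The first summand equals $2\pi\varphi(0)$ because $h_\ve(0)=1$. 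The second integral tends to $0$ as $\ve\to 0$ by dominated convergence: $|\partial_r\psi|$ is bounded with compact $r$-support, while $h_\ve$ is dominated by the constant $1$ and converges pointwise to $0$ on $(0,\infty)$.

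The answer $2\pi\varphi(0)$ agrees with the general formula \eqref{eq:limit_measure_intro_bis}: the origin is the unique characteristic point of $\{z=0\}$, it is non-degenerate, and the characteristic vector field on the horizontal plane is radial, hence has topological index $+1$ at the origin. The only mildly delicate step is the algebraic rationalization producing the closed form of $h_\ve$, which is what makes both the uniform bound and the pointwise decay transparent; the rest of the argument is entirely standard.
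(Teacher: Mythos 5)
Your proof is correct. It rests on the same elementary observation as the paper's argument, namely that the density $-f''_\ve(r)+\tfrac{1}{\sqrt\ve}$ is an exact $r$-derivative of $-h_\ve(r)$ with $h_\ve(r)=f'_\ve(r)-r/\sqrt\ve$, but you package it differently. The paper first proves a general approximate-identity lemma (\cref{lem:delta_conv}): a sequence of non-negative $L^1$ densities whose mass on every ball $B_\rho(0)$ tends to $2\pi$ and whose mass outside tends to $0$ converges to $2\pi\delta_0$; it then verifies these two conditions by computing $\int_0^\rho(-f''_\ve+\tfrac{1}{\sqrt\ve})\,dr=\left[-f'_\ve(r)+\tfrac{r}{\sqrt\ve}\right]_0^\rho$ and expanding as $\ve\to0$. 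You instead integrate by parts against the test function directly, so that the boundary term $2\pi\varphi(0)h_\ve(0)=2\pi\varphi(0)$ produces the Dirac mass and the remainder is killed by dominated convergence, using the rationalized closed form
\begin{equation}
h_\ve(r)=\frac{4\ve^{3/2}}{\bigl(r^2+2\ve+r\sqrt{r^2+4\ve}\bigr)\sqrt{r^2+4\ve}}
\end{equation}
to get both the uniform bound $0\le h_\ve\le 1$ and the pointwise decay $h_\ve(r)\to0$ for $r>0$. The two routes are of comparable length and both are complete; yours is more self-contained (no auxiliary lemma needed) and the explicit closed form of $h_\ve$ makes the positivity and monotonicity transparent, whereas the paper's version isolates a reusable convergence criterion and checks positivity of the density by a separate algebraic inequality. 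One minor point: your monotonicity claim for $h_\ve$ plays exactly the role of the paper's positivity claim $\psi_\ve>0$ (they are equivalent statements, since $\psi_\ve=-h'_\ve(r)/r$), so neither argument avoids that verification.
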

The proof is an explicit computation and is contained in Appendix~\ref{a:heis}.  
        \subsection{Structure of the paper}
        In Section~\ref{s:sicsm} we review some preliminaries on contact sub-Riemannian manifolds and basic facts concerning surfaces embedded in contact manifolds. In \cref{sec:mildly_deg} we introduce characteristic points of finite order of degeneracy and in \cref{ssec:order_main_thm} we derive their properties. In \cref{sec:ve-frames} we introduce the main objects and lemmas needed in the proof of \cref{thm:gauss_bonn_lim_intro0}, which is presented in \cref{sec:proof_of_main}. 
     
     \smallskip

	{\bf Acknowledgements.} Davide Barilari acknowledge the support granted by the MUR-PRIN 2022 project ``Optimal Transport: new challenges across analysis and geometry'' (Project code: 2022K53E57). Andrea Pinamonti was partially supported by the GNAMPA project \emph{Structures of sub-Riemannian hypersurfaces in Heisenberg groups}. Further support was provided by MUR and the University of Trento (Italy) through the MUR-PRIN 2022 project \emph{Regularity problems in Sub-Riemannian structures} (Project code: 2022F4F2LH).  
Andrea Pinamonti is member of the \emph{Gruppo Nazionale per l'Analisi Matematica, la Probabilit\`a e le loro Applicazioni} (GNAMPA), which is part of the \emph{Istituto Nazionale di Alta Matematica} (INdAM).

        \section{Surfaces in 3D contact sub-Riemannian manifolds}\label{s:sicsm}
         We recall some preliminaries on contact sub-Riemannian manifolds and fix our notation. We refer to \cite{Geiges} and \cite{Agrachev} for introductions to contact and sub-Riemannian geometry, respectively.
     \subsection{Contact sub-Riemannian manifolds}\label{sec:preliminaries}   
      
A three-dimensional contact sub-Riemannian manifold is a triple $(M,\mathcal{D},g)$, where $M$ is a smooth three-dimensional manifold, $\mathcal{D}$ is a \emph{contact distribution} and $g$ a \emph{sub-Riemannian metric}, i.e., a smooth metric on $\mathcal D$. We recall that a plane field $\mathcal{D}$ is a contact distribution if locally $\mathcal D=\ker \omega$ for some one-form $\omega$ satisfying $d\omega|_{\mathcal D}$ is non-degenerate. In dimension three this is equivalent to requiring $\mathcal D +[\mathcal D,\mathcal D]=TM$.

          A sub-Riemannian contact manifold is called co-orientable if the one-form satisfying $\mathcal D=\ker \omega$ can be chosen globally. In this case the three-form $\omega\wedge d\omega$ is a global volume form and $M$ is also orientable.  For simplicity in what follows we always assume the contact structure to be co-oriented.

    The Reeb vector field associated with a contact form $\omega$ is the vector field $f_0\in\mathfrak X(M)$ uniquely determined by the following two equations 
    \begin{equation}\label{eq:def_reeb}
        \omega(f_0)=1,\qquad d\omega(f_0,\cdot)=0.
    \end{equation}
  The contact form $\omega$ is called normalized with respect to the sub-Riemannian metric if it satisfies
    \begin{equation}\label{eq:omega_norm}
        d\omega(f_{1},f_{2})=1,
    \end{equation}
    for every positively oriented orthonormal frame for $\mathcal D$ (i.e., $d\omega|_{\mathcal D}$ is the area form defined by the metric $g$). 
    Without further notice, in the rest of the paper we always assume $\omega$ to be normalized.
    
    Given a contact sub-Riemannian manifold one can define the length of an absolutely continuous curve $\gamma:[0,1]\to M$ which is \emph{horizontal} (i.e., almost everywhere tangent to $\mathcal D$) as
    \begin{equation}
        \mathrm{length}(\gamma)=\int_{0}^1\|\dot\gamma(t)\|_gdt,
    \end{equation}
    and the sub-Riemannian distance $d_{SR}:M\times M\to\mathbb R$ as follows
    \begin{equation}
        d_{SR}(q_0,q_1)=\inf\{\mathrm{length}(\gamma)\mid \omega(\dot\gamma)=0,\,\gamma(0)=q_0,\,\gamma(1)=q_1\},\qquad \forall\,q_0,q_1\in M.
    \end{equation}
    where the infimum is over all horizontal absolutely continuous curve joining the two points.
    It is well-known that $d_{SR}$ is actually a distance function since $\mathcal D$ is bracket-generating, see \cite[Ch.\ 3]{Agrachev}.

    Any sub-Riemannian space can be seen as a Gromov-Hausdorff limit of Riemannian spaces. In the contact sub-Riemannian case there exists a special family of approximating Riemannian metrics.
    \begin{defi}\label{def:g_ve}
            Let $(M,\mathcal D, g)$ be a contact sub-Riemannnian manifold and let $f_0$ be the associated Reeb vector field. For any $\ve>0$ we define the Riemannian metric $g^\ve$ as the unique one satisfying 
            \begin{equation}\label{eq:g_ve_properties}
                g^\ve|_{\mathcal D}=g,\qquad g^\ve(\mathcal D,f_0)=0,\qquad g^\ve(f_0,f_0)=\frac{1}{\ve}.
            \end{equation}  \end{defi}
            Furthermore, if $S\subset M$ is an embedded oriented surface, we denote by $\sigma^\ve$ the area form of the oriented Riemannian surface $(S,g^\ve|_S)$, i.e., the differential two-form satisfying 
                        \begin{equation}\label{eq:def_sigma_ve}
               \sigma^\ve(v^{\ve}_{1},v^{\ve}_{2})=1.
            \end{equation}
      for every positively oriented orthonormal frame $v^{\ve}_{1},v^{\ve}_{2}$ of $S$.

       \subsection{Surfaces in three-dimensional contact sub-Riemannian manifolds}\label{sec:surfaces_in_3D_cont}
 Let $S$ be a surface embedded in a co-oriented three-dimensional contact manifold $(M,\mathcal D)$. The intersection $TS\cap\mathcal D$  defines a field of directions, whose singularities are the points $p\in S$ where $T_pS=\mathcal D_p$, and are called characteristic points. The characteristic set $\Sigma(S)$ is the set of characteristic points. We recall that $\Sigma(S)$ is contained in a one-dimensional submanifold of $S$, hence has measure zero \cite{Daniele}.
 
 Let $\omega\in\Omega^1(M)$ be a contact form. If the surface is oriented by some area form $\sigma\in \Omega^2(S)$, then we can define the characteristic vector field as the unique vector field $X\in \mathfrak X(S)$ satisfying
        \begin{equation}\label{eq:general_X}
            \iota_X\sigma=\omega|_S.
        \end{equation}
        The characteristic vector field vanishes at characteristic points:
        \begin{equation}
            \mathrm{span}_\mathbb R\{X_p\}=\begin{cases}
                T_pS\cap \mathcal D_p,\quad &p\notin \Sigma(S),\\
                0,&p\in \Sigma(S).
            \end{cases}
        \end{equation}
        For a given surface, there exist several different characteristic vector fields. In fact, they constitute a conformal class: rescaling the area form $\sigma$ by a positive function results in a rescaling of $X$ by the reciprocal of the same function. 
        \begin{remark}\label{rmk:div_non_zero}
            Any characteristic vector field has non zero divergence at characteristic points. Recall that, the divergence of $X$ with respect to the area form $\sigma$ is defined by ($L_X$ denoting the Lie derivative)
        \begin{equation}\label{eq:div(X)_formula0}
            \mathrm{div}(X)\sigma=L_X\sigma=d\iota_X \sigma.
        \end{equation}
        Therefore, taking the differential of both sides of \eqref{eq:general_X} and using $L_X=d\circ \iota_X +\iota_X\circ d$ we obtain
            \begin{equation}\label{eq:div(X)_formula}
                \mathrm{div}(X)\sigma=d\omega|_S.
            \end{equation}
            Now, if $q\in S$ is a characteristic point, then
       $T_qS=\mathcal D_q$.
            By the contact condition, the restriction of $d\omega$ to $\mathcal D$ is non degenerate, therefore \eqref{eq:div(X)_formula} implies that $\mathrm{div}(X)$ does not vanish at $q$.
        \end{remark}
        In the case of a surface embedded in a contact sub-Riemannian manifold we can fix a canonical representative characteristic vector fields by fixing the area form.
        \begin{defi}
             Let $(M,\mathcal D,g)$ be a contact sub-Riemannian manifold and let $S\subset M$ be an embedded oriented surface. Let $\sigma^1$ be the area form defined in \eqref{eq:def_sigma_ve} for $\ve=1$. We define the sub-Riemannian characteristic vector field $X\in\mathfrak X(S)$ as 
            \begin{equation}\label{eq:def_X}
                \iota_X\sigma^1=\omega|_S,
            \end{equation}
        where $\omega$ denotes the contact form normalized as in \eqref{eq:omega_norm}.
        \end{defi}
    
        The divergence of the sub-Riemannian characteristic vector field and its norm are related.
        \begin{prop}\label{prop:div=pm}
            Let $(M,\mathcal D,g)$ be a contact sub-Riemannian manifold and $S$ an embedded oriented surface.
            Let $\sigma^1$ be the area form defined in \eqref{eq:def_sigma_ve} for $\ve=1$. 
            Then the divergence of the sub-Riemannian characteristic vector field $X$, computed with respect to $\sigma^1$, satisfies on $S$
            \begin{equation}\label{eq:div(X)^2}
                \mathrm{div}(X)^2+|X|^2=1.
            \end{equation}
            \end{prop}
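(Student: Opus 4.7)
The plan is a pointwise computation in a convenient orthonormal frame: both $|X|^{2}$ and $\mathrm{div}(X)^{2}$ turn out to be the squared components of the Reeb vector field $f_{0}$ respectively tangent and normal to $S$ (measured in $g^{1}$), so their sum is $|f_{0}|_{g^{1}}^{2}=1$.

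Fix $p\in S$ and choose a positively oriented $g^{1}$-orthonormal frame $e_{1},e_{2}$ of $T_{p}S$, with unit normal $n\in T_{p}M$ chosen so that $(e_{1},e_{2},n)$ is positively oriented in $(M,g^{1})$. The key auxiliary identity is that the normalized contact form $\omega$ and the Reeb field $f_{0}$ are $g^{1}$-dual, i.e.\ $\omega(\cdot)=g^{1}(f_{0},\cdot)$. Indeed, this follows from \eqref{eq:g_ve_properties} with $\varepsilon=1$ together with \eqref{eq:def_reeb}: both sides vanish on $\mathcal{D}$ and equal $1$ on $f_{0}$.

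For the norm, the defining equation $\iota_{X}\sigma^{1}=\omega|_{S}$ and $\sigma^{1}(e_{1},e_{2})=1$ give, writing $X=x_{1}e_{1}+x_{2}e_{2}$, the identifications $x_{1}=\omega(e_{2})$ and $x_{2}=-\omega(e_{1})$. Using the duality above,
\begin{equation*}
|X|^{2}=\omega(e_{1})^{2}+\omega(e_{2})^{2}=g^{1}(f_{0},e_{1})^{2}+g^{1}(f_{0},e_{2})^{2}=|f_{0}|_{g^{1}}^{2}-g^{1}(f_{0},n)^{2}=1-g^{1}(f_{0},n)^{2}.
\end{equation*}

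For the divergence, by \eqref{eq:div(X)_formula} one has $\mathrm{div}(X)=d\omega(e_{1},e_{2})$. Here I would exploit the cleanest reformulation of the normalization \eqref{eq:omega_norm} together with $d\omega(f_{0},\cdot)=0$: together they imply $d\omega=\iota_{f_{0}}\mathrm{vol}_{g^{1}}$, where $\mathrm{vol}_{g^{1}}=\omega\wedge d\omega$ is the Riemannian volume form of $g^{1}$. Indeed, both two-forms vanish on $f_{0}$ and coincide on $(f_{1},f_{2})$ for any $g$-orthonormal positively oriented frame of $\mathcal{D}$. Consequently,
\begin{equation*}
\mathrm{div}(X)=d\omega(e_{1},e_{2})=\mathrm{vol}_{g^{1}}(f_{0},e_{1},e_{2})=g^{1}(f_{0},n),
\end{equation*}
the last equality coming from $(e_{1},e_{2},n)$ being a positively oriented $g^{1}$-orthonormal frame of $T_{p}M$. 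Squaring and summing with the previous identity yields $\mathrm{div}(X)^{2}+|X|^{2}=1$.

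The only delicate point is the clean intrinsic formula $d\omega=\iota_{f_{0}}\mathrm{vol}_{g^{1}}$; once this is established the rest is linear algebra in the frame $f_{0},f_{1},f_{2}$. A minor care is required to keep the orientation consistent between $\sigma^{1}$ on $S$, the induced normal $n$, and the ambient volume $\mathrm{vol}_{g^{1}}$, but these conventions are already built into \eqref{eq:def_sigma_ve} and \eqref{eq:def_X}.
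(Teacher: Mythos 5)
Your proof is correct, and it takes a genuinely different (and in one respect cleaner) route than the paper's. The paper works on $S\setminus\Sigma(S)$ with the singular adapted frame $e_1=X/|X|$, $e_2=Je_1$, decomposes $e_2=af_2+\sqrt{1-a^2}f_0$, identifies $a=\mathrm{div}(X)$ via $d\omega(e_1,e_2)$ and $\sqrt{1-a^2}=|X|$ via the $J$-operator, and then extends \eqref{eq:div(X)^2} to the characteristic set by continuity. You instead work pointwise in an arbitrary oriented orthonormal frame of $T_pS$ and reduce everything to two intrinsic identities for the normalized contact form, $\omega=g^1(f_0,\cdot)$ and $d\omega=\iota_{f_0}\mathrm{vol}_{g^1}$, which exhibit $|X|^2$ and $\mathrm{div}(X)^2$ as the squared tangential and normal components of the Reeb field; the identity then is literally Pythagoras for $f_0$ relative to $T_pS$. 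This buys you a proof valid at every point of $S$, characteristic points included, with no limiting argument, and it makes the geometric content of \eqref{eq:div(X)^2} transparent (it measures the inclination of $f_0$ to the surface). The only points requiring care, which you handle correctly, are the orientation conventions tying $\sigma^1$, the induced normal $n$ and $\mathrm{vol}_{g^1}=\omega\wedge d\omega$ together (and any residual sign ambiguity is harmless since only $\mathrm{div}(X)^2$ enters), plus the implicit fact that $X$ is horizontal, so its $g^1|_S$-norm agrees with the sub-Riemannian norm appearing in the statement.
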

             We stress that in \eqref{eq:div(X)^2} the norm $|\cdot|$ is the sub-Riemannian norm on $\mathcal D$.
            \begin{proof}
            Let us first work on $S\setminus \Sigma(S)$ and set $f_1=X/|X|$ the normalized characteristic vector field, let $f_2$ be the oriented horizontal normal and let $f_0$ be the Reeb field associated with the normalized contact form $\omega$. Notice that, being $g^1|_{\mathcal D}$ equal to $g$, then $f_1$ has norm $1$ for $(S,g^1|_S)$. 
            Let $J:TS\to TS$ be the orthogonal map defined by 
            \begin{equation}\label{eq:JS}
            \sigma^1(v,Jw)=g^1(v,w),\qquad v,w\in TS.
            \end{equation} 
            We define an orthonormal frame for $g^1|_{S\setminus \Sigma(S)}$ as 
            \begin{equation}
            	e_1=f_1,\qquad e_2=Je_1.
            \end{equation}
            Then there exists a function $a:S\to [-1,1]$ such that, 
            \begin{equation}\label{eq:e1,e2}
                e_1=f_1,\qquad e_2=af_2+\sqrt{1-a^2}f_0.
            \end{equation}
            Taking the differential of the 1-forms in both sides of \eqref{eq:def_X} and evaluating the result at $e_1,e_2$ we get 
            \begin{equation}\label{eq:divea}
                \mathrm{div}(X)=d\omega(e_1,e_2)=d\omega(f_1,af_2+\sqrt{1-a^2}f_0)=ad\omega(f_1,f_2)=a,
            \end{equation}
            where, in the third equality we have used that $d\omega(f_{0},\cdot)=0$ from  \eqref{eq:def_reeb}, and in the last one the normalization condition \eqref{eq:omega_norm}.
            Moreover, using again \eqref{eq:JS}, we can compute
            \begin{equation}
                \begin{aligned}
                |X|^2&=g^1(X,X)=\sigma^1(X,JX)=\iota_X\sigma^1(JX)=\omega(JX)\\
                &=|X|\omega(af_2+\sqrt{1-a^2}f_0)=|X|\sqrt{1-a^2},
                \end{aligned}
            \end{equation}
            where the fourth equality follows from \eqref{eq:def_X}, and the fifth from \eqref{eq:e1,e2} recalling that $e_1=f_1=X/|X|$. This implies $|X|=\sqrt{1-a^2}$ and, being
        $a=\mathrm{div}(X)$ thanks to \eqref{eq:divea}, identity \eqref{eq:div(X)^2} is proved on $S\setminus \Sigma(S)$. By continuity, the equality is proved on $S$.
            \end{proof}
            We recall the definition of differential of a vector field at a singular point.
            \begin{defi}
                Let $S$ be a smooth manifold and $X\in\mathfrak X(S)$ a smooth vector field vanishing at $q\in S$. The differential of $X$ at $q$ is the map 
                \begin{equation}
                    D_qX:T_qS\to T_qS,\qquad D_qX(Y)=[Y,X]_q.
                \end{equation}
            \end{defi}
       
                Notice that the definition of $D_qX(Y)$ requires a choice of a vector field extension of $Y$, near $q$. However, since $X_q=0$, the vector $[Y,X]_q$ only depends on $Y_q$.
 			 \begin{remark}\label{r:tresei}
 				Let $\nabla$ be the Levi-Civita connection on $TS$ for $g^{1}|_{S}$, and $q$ a characteristic point, then 
 				$$\nabla_{Y}X|_{q}=\nabla_{X}Y|_{q}+[Y,X]|_{q}=(D_{q}X)Y,\qquad \forall \,Y\in T_qM,$$ 
 				where we used that $\nabla_{X}Y|_{q}=0$ since $X|_{q}=0$.  At a characteristic point $q$ we have the equality 
               \begin{equation}
               	\mathrm{div}_q(X)=\mathrm{trace}(D_qX)=\mathrm{trace}(\nabla X|_{q}).
               \end{equation}
               By \cref{rmk:div_non_zero} we deduce that $\mathrm{trace}(D_qX)\neq 0$ and that $\rank D_qX\geq 1$ at characteristic points.

            \end{remark}
            
            \section{Order of degeneracy of characteristic points. Proof of Proposition~\ref{prop:critical_curve_lambda_intro}}\label{sec:mildly_deg}

            Throughout this subsection $S$ denote an oriented surface embedded in a contact sub-Riemannian manifold $(M,\mathcal D,g)$ with sub-Riemannian characteristic vector field $X$.          
            
We start by proving the existence of a horizontal kernel extension $\gamma:(-\varepsilon,\varepsilon)\to S$.
    	\begin{proof}[Proof of Lemma~\ref{l:novod}]
    
    According to \cref{rmk:div_non_zero}, it holds $\mathrm{div}_q(X)=\mathrm{trace}(D_qX)\neq 0$. Therefore, the characteristic polynomial of the endomorphism $D_qX:T_qS\to T_qS$ reads
		\begin{equation}
			P(\lambda)=\lambda(\lambda-\mathrm{div}_q(X)).
		\end{equation} 
		Applying the classical center manifold theorem (one see for example \cite[Thm.\,1, Sec.\,2.12]{Perko2001}) we deduce the existence of a smooth embedded submanifold $\mathcal C$ which is invariant under the flow of $X$ and such that $T_q\mathcal C=\ker D_q X$. Let $\gamma:(-\ve,\ve)\to S$ be a smooth regular parametrization of $\mathcal C$. Since $\mathcal C$ is inavriant under the flow of $X$, we deduce that $\gamma$ is horizontal. Reparametrizing $\gamma$ by arc-length we find a horizontal kernel extension. 
	\end{proof}
   \begin{lemma}\label{l:almeno2}
   Given a horizontal kernel extension $\gamma:(-\ve,\ve)\to S$, the following smooth function
    \begin{equation}\label{eq:lambda_N_body}        
    	\lambda_\gamma:(-\ve,\ve)\to\mathbb R,\qquad \lambda_\gamma(t)=g(X,\dot\gamma(t))|_{\gamma(t)},
    \end{equation}
satisfies $\lambda_{\gamma}(0)=\dot \lambda_{\gamma}(0)=0$, i.e., has order of vanishing at zero $\geq 2$.
   \end{lemma}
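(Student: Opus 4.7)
The plan is to show both vanishings directly, using the vanishing of $X$ at $q$ together with the interpretation of $D_qX$ via the Levi-Civita connection from Remark~\ref{r:tresei}.

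For the first identity, since $\gamma(0)=q$ is a characteristic point we have $X|_q=0$, so $\lambda_\gamma(0)=g(X|_q,\dot\gamma(0))=0$ with no further work.

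For $\dot\lambda_\gamma(0)$, I would rewrite the inner product using $g^{1}|_{S}$ in place of $g$: along the horizontal curve $\gamma$ both $X|_{\gamma(t)}\in \mathcal D\cap TS$ and $\dot\gamma(t)$ are horizontal, so $g(X,\dot\gamma)=g^{1}(X,\dot\gamma)$ along $\gamma$ and the two functions have the same derivative. Letting $\nabla$ denote the Levi-Civita connection of $g^{1}|_{S}$, metric compatibility gives
\begin{equation}
\dot\lambda_\gamma(t)=g^{1}\!\left(\nabla_{\dot\gamma}X,\dot\gamma\right)+g^{1}\!\left(X|_{\gamma(t)},\nabla_{\dot\gamma}\dot\gamma\right).
\end{equation}
Evaluating at $t=0$, the second summand vanishes because $X|_q=0$. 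For the first summand, Remark~\ref{r:tresei} gives $\nabla_{Y}X|_q=(D_qX)Y$ for every $Y\in T_qS$, and by the very definition of horizontal kernel extension we have $\dot\gamma(0)\in\ker(D_qX)$; hence $\nabla_{\dot\gamma(0)}X|_q=(D_qX)\dot\gamma(0)=0$. Therefore $\dot\lambda_\gamma(0)=0$ as required.

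There is no real obstacle here: the statement reduces to the combination of $X|_q=0$, metric compatibility of $\nabla$, and the identification of $\nabla X|_q$ with $D_qX$ at characteristic points. The only small care needed is the switch from $g$ to $g^{1}|_{S}$, which is justified by the horizontality of both factors.
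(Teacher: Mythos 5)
Your proof is correct and follows essentially the same route as the paper's: both establish $\lambda_\gamma(0)=0$ from $X|_q=0$ and then compute $\dot\lambda_\gamma(0)$ via metric compatibility of the Levi-Civita connection of $g^{1}|_{S}$, identifying $\nabla_{\dot\gamma(0)}X|_q$ with $(D_qX)\dot\gamma(0)=0$ through Remark~\ref{r:tresei}. The only difference is that you make explicit the (harmless) substitution of $g$ by $g^{1}|_{S}$ along the horizontal curve, which the paper leaves implicit.
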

   \begin{proof} Since at characteristic points $X$ vanishes, then $\lambda_{\gamma}(0)=0$. Denoting $\nabla$ the Levi-Civita connection of $g^1|_S$, we compute
   $$\frac{d}{dt}g(X,\dot\gamma)|_{\gamma(t)}=g(\nabla_{\dot \gamma(t)}X,\dot\gamma(t))+g(X,\nabla_{\dot \gamma(t)}\dot\gamma(t))$$
  The second term at $t=0$ is zero since $X$ vanishes at characteristic points. The first one vanishes as well:  consider any vector field $Y$ extension of $\dot \gamma(t)$ and compute (at $t=0$)
  \begin{equation}
  	\nabla_{\dot \gamma(t)}X|_{q}=\nabla_{Y}X|_{q}=(D_qX)Y=0,
  \end{equation}
	where we have used \cref{r:tresei} and the fact that $Y|_{q}\in \ker D_{q}X$ by construction.
   \end{proof}
            \subsection{Well-posedness of the order. Proof of Proposition~\ref{prop:critical_curve_lambda_intro}}\label{ssec:defi1}
    We now move to the proof of the fact that the order of degeneracy is well-defined.
	\begin{proof}[Proof of Proposition~\ref{prop:critical_curve_lambda_intro}]
	
	Let $q\in S$ be a degenerate characteristic point, i.e., such that $\det (D_qX)=0$. Recall that a horizontal kernel extension at $q$ is an arc-length parametrized horizontal smooth curve $\gamma:(-\ve,\ve)\to S$ satisfying 
            	$
            		\gamma(0)=q$ and $\dot\gamma(0)\in \ker (D_q X)
            	$.

	(ii).	We now prove that the order of vanishing of the following smooth function
    \begin{equation}\label{eq:lambda_N_body}        
    	\lambda_\gamma:(-\ve,\ve)\to\mathbb R,\qquad \lambda_\gamma(t)=g(X,\dot\gamma(t))|_{\gamma(t)},
    \end{equation}
    at $t=0$ does not depend on the choice of the horizontal kernel extension $\gamma$. Notice that, once proved this statement, then Lemma~\ref{l:almeno2} implies that $\mathrm{ord}(q)\geq 2$.
    
    Let $g^1$ be the Riemannian metric described in \cref{def:g_ve}. For the sake of simplicity in this proof we denote $g^1|_{S}$ by $g$ and the corresponding area form $\sigma^1$ by $\sigma$. Let $\gamma:(-\ve,\ve)\to S$ be a kernel extension, and let $T\in\mathfrak X(S)$ be a vector field which extends  $\dot\gamma$. Let $N$ denote the oriented orthonormal complement in $S$ with respect to $g$. Then, from \eqref{eq:def_X} we deduce that the restriction of the sub-Riemannian contact form $\omega$ to the surface $S$ can be computed as
		\begin{equation}\label{eq:omega_S}
			\omega|_S(\cdot)=\sigma(X,\cdot)=\det\begin{pmatrix}
				g(T,X) &g(T,\cdot)\\
				g(N,X) & g(N,\cdot)
			\end{pmatrix}.
		\end{equation}
		Let $\mathcal C=\gamma(-\ve,\ve)$. We claim that 
		\begin{equation}\label{eq:C_omega_S}
			\mathcal C=\{p\in S\mid \omega(T)|_p=0\}=\{p\in S\mid g(N,X)|_p=0\}.
		\end{equation}
		We observe that the function $g(N,X)$ is a submersion near $q$. First observe that being $X|_q=0$ and $[T,X]|_q=D_qX(\dot\gamma(0))=0$ one has $\nabla_{T}X|_{q}=0$. For the divergence at the point $q$, one has
		 $$\mathrm{div}_q(X)=\mathrm{trace}(\nabla X|_{q})=g(\nabla_T X,T)|_q+g(\nabla_N X,N)|_q=g(\nabla_{N}X,N)|_q,$$ 
		 where we used \cref{r:tresei}. Thus, differentiating $g(N,X)$ in the direction of $N$ and evaluating the result at $q$ we find
		\begin{equation}
			\begin{aligned}\label{eq:no}
				N(g(N,X))|_q=g(\nabla_{N}N,X)|_q+g(N,\nabla_{N}X)|_q=\mathrm{div}_q(X),
							\end{aligned}
		\end{equation}
where $g(\nabla_{N}N,X)|_{q}=0$ since $X|_{q}=0$. According to \cref{rmk:div_non_zero}, $\mathrm{div}_q(X)\neq 0$. Therefore, the regular value theorem implies that the vanishing set of $\omega(T)$ is a smooth curve near $q$. On the other hand, since $T|_\mathcal C=\dot\gamma$ and $\dot\gamma$ is horizontal, the vanishing set of $\omega(T)$ contains $\mathcal C$; proving \eqref{eq:C_omega_S}.

		Let now ${\gamma}':(-\ve,\ve)\to S$ be another horizontal kernel extension and let ${T}'\in\mathfrak X(S)$ be a vector field extension of its velocity  field $\dot{ \gamma}'$. Since $\ker D_q X$ is one-dimensional we have 
		\begin{equation}\label{eq:tilde_N=pm_N}
			\dot{ \gamma}'={T}'_q=\pm T_q=\pm \dot\gamma(0).
		\end{equation}
		Let us define the functions $\lambda_\gamma,{\lambda}'_\gamma:(-\ve,\ve)\to S$ as follows
		\begin{equation}
			\lambda_\gamma(t)=g(\dot\gamma(t),X)|_{\gamma(t)},\qquad {\lambda}'_\gamma(t)=g(\dot{\gamma}'(t),X)|_{\gamma'(t)}.
		\end{equation}
		Denoting with ${\mathcal C}'=\gamma'(-\ve,\ve)$, the assertion that the two functions have the same order at $t=0$ is equivalent to prove the following 
		\begin{equation}\label{eq:latter}
			\mathrm{order}(g({T}',X)|_{{\mathcal C}'},q)=\mathrm{order}(g(T,X)|_{\mathcal C},q).
		\end{equation}
		where we denote by $\mathrm{order}(f|_{{\mathcal C}},q)$ the order of vanishing of $f$ at $q$ along a curve $\mathcal C$ (cf. \cref{a:contact}).
	
		We prove now equality \eqref{eq:latter}. By \eqref{eq:C_omega_S} applied to $\mathcal C'$ one has
		\begin{equation}\label{eq:latter1}
			{\mathcal C}'=\{p\in S\mid \omega(T')|_p=0\}.
		\end{equation}
		Denote $\mathrm{order}(\mathcal C\cap{\mathcal C}',q)$ the order of contact between $\mathcal C$ and $\mathcal C'$ at $q$ (see \cref{def:order_of_contact} for a formal definition). By \eqref{eq:latter1}, applying \cref{lem:order=order_of_eq} we find
		\begin{equation}\label{eq:order_of_contact_mathcal}
			\begin{aligned}
				\mathrm{order}(\mathcal C\cap{\mathcal C}',q)+1&= \mathrm{order}\left(\omega(T')|_{\mathcal C},q\right)=\mathrm{order}\left(g(T,X)g(N,T')|_\mathcal C,q\right)\\
				&>\mathrm{order}\left(g(T,X)|_\mathcal C,q\right),
			\end{aligned}
		\end{equation}
		where in the second equality we have used the expression \eqref{eq:omega_S} of $\omega|_S$ and $g(N,X)|_\mathcal C\equiv 0$, thanks to \eqref{eq:C_omega_S}. The third inequality follows from $g(N,{T'})|_q=0$. 
		
		By construction $\omega(T')|_{{\mathcal C}'}\equiv 0$, hence by \eqref{eq:omega_S} 
		\begin{equation}\label{eq:det0}
			g(N,X)g(T,T')|_{{\mathcal C}'}=\left.{g(T,X)g(N,T')}\right|_{{\mathcal C}'}.
		\end{equation}
		Decomposing $X=g(T,X)T+g(N,X)N$ and using that $g(T,T')\neq 0$ around $q$ we get
		\begin{align}
			g(T',X)|_{{\mathcal C}'}&=\left.\left(g(T,X)g({T'},T)+g(N,X)g(T',N)\right)\right|_{{\mathcal C}'} \nonumber \\
			&=g(T,X)g({T'},T)\left.\left(1+ \frac{g(T',N)^{2}}{g(T,T')^{2}}\right)\right|_{{\mathcal C}'}\label{eq:g(tilde_N,X)}
		\end{align}
		where in the last equality we used \eqref{eq:det0}. Notice that on the right hand side of \eqref{eq:g(tilde_N,X)} the quantity  $g(T,X)$ is multiplied by two functions that are non vanishing at $q$ (notice $g(T',N)|_{q}=0$). Hence one has
		\begin{equation}\label{eq:change_function}
			\mathrm{order}(g(T',X)|_{{\mathcal C}'},q)=\mathrm{order}(g(T,X)|_{{\mathcal C}'},q)=\mathrm{order}(g(T,X)|_{\mathcal C},q),
		\end{equation}
		where the last equality combines \eqref{eq:order_of_contact_mathcal} and \cref{lem:order_lem1}. 
		
			(iii). Finally, let us show that if $\lambda_\gamma$ has finite and odd order $k$ at $t=0$ then the quantity
    \begin{equation}\label{eq:k-th_eigenvalue}
        \Lambda^{(k)}(q):=\left.\frac{d^k}{dt^k}\lambda_\gamma(t)\right|_{t=0}
    \end{equation}
   is independent on the choice of the horizontal kernel extension $\gamma$.
   
   With the following computation we prove that, if the order $k$ of $\lambda_\gamma$ and ${\lambda}'_\gamma$ at $t=0$ is odd, then the leading order coefficients of the two functions coincide. Let us start by observing that, by \eqref{eq:g(tilde_N,X)},
		\begin{equation}
					\left.\frac{d^k}{dt^k}\lambda_\gamma(t)\right|_{t=0}=\left.\frac{d^k}{dt^k}\right|_{t=0}g(T',X)|_{{\mathcal C}'}=\left.\frac{d^k}{dt^k}\right|_{t=0}\left [ g(T,X)g({T'},T)\left(1+ \frac{g(T',N)^{2}}{g(T,T')^{2}}\right)\right ].
		\end{equation}
		Using now that $\frac{d^j}{dt^j}\lambda_\gamma(t)\big|_{t=0}=\frac{d^j}{dt^j}\big|_{t=0}g(T',X)|_{{\mathcal C}'}=0$ for every $0\leq j \leq k-1$, one has
		\begin{align}
					\left.\frac{d^k}{dt^k}\right|_{t=0}g(T',X)|_{{\mathcal C}'}&=\left [ \left.\frac{d^k}{dt^k}\right|_{t=0} g(T,X)|_{{\mathcal C}'}\right ] g({T'},T)\left(1+ \frac{g(T',N)^{2}}{g(T,T')^{2}}\right)\\
					&=(\pm 1)\left [ \left.\frac{d^k}{dt^k}\right|_{t=0} g(T,X)|_{{\mathcal C}'}\right ] 
		\end{align}
		since $g({T'},T)|_{q}=\pm 1$ at the point $q$ and the term and $g(T',N)|_{q}=0$.
Using then $\dot\gamma(0)=(\pm 1)\dot{\gamma}'(0)$ (the sign being the same as $g({T'},T)|_{q}$), thanks to \eqref{eq:lemmachiave} in Remark~\ref{r:lemmachiave} one gets
	\begin{align}
					\left.\frac{d^k}{dt^k}\right|_{t=0}g(T',X)|_{{\mathcal C}'}=(\pm 1)\left [ \left.\frac{d^k}{dt^k}\right|_{t=0} g(T,X)|_{{\mathcal C}'}\right ] =(\pm 1)^{k+1}\left [ \left.\frac{d^k}{dt^k}\right|_{t=0} g(T,X)|_{{\mathcal C}}\right ] 
		\end{align}
 and the conclusion follows since $k$ is odd.
		\end{proof}

    \section{Normal forms and coordinate characterization. Proof of Theorem~\ref{thm:finite_order_intro}}\label{ssec:order_main_thm}
 
    We provide a local coordinates characterization of the finite order condition. 
    \begin{prop}\label{lem:normal_form_strong_body}
    Let $S$ be a surface embedded in a $3D$ contact sub-Riemannian manifold with characteristic vector field $X$. 
    Let $q\in S$ be a characteristic point with order of degeneracy  $\mathrm{ord}(q)=k\geq 2$. Then there exists coordinates $(x,y)$ on $S$ near $q$ such that 
\begin{equation}\label{eq:normal_form_strong}
                   q=(0,0),\qquad X=x(\nu+a_1(x,y))\partial_x+\left(\frac{\mu}{k!} y^k+xa_0(x,y)+b_0(y)\right)\partial_y,
               \end{equation}
               where $\nu=\mathrm{div}(X)|_q$,  $\mu=\Lambda^{(k)}(q)$ is defined in \eqref{eq:k-th_eigenvalue}, and $a_0,a_{1},b_0$ are smooth functions satisfying \begin{equation}\label{eq:needed_orders_app_body}
                b_0(y)=O(y^{k+1}),\qquad a_i(x,y)=O\left(\sqrt{x^2+y^2}\right),\qquad i=0,1.
               \end{equation}
               Moreover, in these coordinates, the curve $y\mapsto (0,y)$ is an horizontal kernel extension.
\end{prop}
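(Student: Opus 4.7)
The plan is to construct local coordinates $(x,y)$ that simultaneously straighten the horizontal kernel extension and diagonalize $D_qX$. Since $\nu=\mathrm{div}_q(X)\neq 0$ by \cref{rmk:div_non_zero} and $\det D_qX=0$, the differential $D_qX$ has distinct eigenvalues $0$ and $\nu$, hence is diagonalizable. The proof of \cref{l:novod} produces via the center manifold theorem a smooth $X$-invariant curve $\mathcal C$ tangent to $\ker D_qX$ at $q$; parametrize it by sub-Riemannian arc-length to obtain a horizontal kernel extension $\gamma$. Let $v_1\in T_qS$ be the eigenvector of $D_qX$ associated with $\nu$, extend it to a smooth vector field $V$ transverse to $\gamma$ near $q$, and define coordinates via $(x,y)\mapsto\phi^V_x(\gamma(y))$ where $\phi^V$ is the flow of $V$. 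By construction $\{x=0\}=\mathcal C$, $\partial_y|_{(0,y)}=\dot\gamma(y)$ (horizontal, of unit $g$-norm), and $\partial_x|_{(0,0)}=v_1$.

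Writing $X=P(x,y)\partial_x+Q(x,y)\partial_y$, the $X$-invariance of $\mathcal C=\{x=0\}$ forces $P(0,y)\equiv 0$, hence $P(x,y)=x\tilde P(x,y)$ for some smooth $\tilde P$. Computing $D_qX(Y)=[Y,X]|_q$ on the basis $(\partial_x|_q,\partial_y|_q)$ and using $\partial_x|_q=v_1$ and $\partial_y|_q\in\ker D_qX$ yields
\begin{equation*}
\partial_xP|_q=\nu,\qquad \partial_yP|_q=0,\qquad \partial_xQ|_q=0,\qquad \partial_yQ|_q=0.
\end{equation*}
Setting $a_1:=\tilde P-\nu$, we have $a_1(0,0)=0$ and therefore $a_1=O(\sqrt{x^2+y^2})$, giving the $\partial_x$-component of the claimed normal form. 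Decompose $Q(x,y)=Q(0,y)+x\int_0^1\partial_xQ(sx,y)\,ds=:b_0(y)+xa_0(x,y)$; then $a_0(0,0)=\partial_xQ|_q=0$, so $a_0=O(\sqrt{x^2+y^2})$.

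It remains to identify $b_0(y)=Q(0,y)$ with $\tfrac{\mu}{k!}y^k+O(y^{k+1})$. Along $\mathcal C$ the vector field reads $X|_{(0,y)}=Q(0,y)\partial_y$; moreover $X$ and $\dot\gamma$ are horizontal, so $g$ and $g^1$ agree on them, and $g^1(\partial_y,\partial_y)|_{(0,y)}=1$ by arc-length parametrization. Therefore
\begin{equation*}
\lambda_\gamma(y)=g(X,\dot\gamma)|_{\gamma(y)}=Q(0,y)\,g^1(\partial_y,\partial_y)|_{(0,y)}=b_0(y).
\end{equation*}
The assumption $\mathrm{ord}(q)=k$ means $\lambda_\gamma$ vanishes to order $k$ at $0$ with $\lambda_\gamma^{(k)}(0)=\Lambda^{(k)}(q)=\mu$, so Taylor's formula gives $b_0(y)=\tfrac{\mu}{k!}y^k+O(y^{k+1})$. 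The curve $y\mapsto(0,y)$ is by construction precisely $\gamma$, hence a horizontal kernel extension.

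The main obstacle lies in the joint setup of the coordinates: one must simultaneously ensure that $\{x=0\}$ is the $X$-invariant center manifold (so $P(0,y)\equiv 0$), that $y$ is sub-Riemannian arc-length on $\gamma$ (so $b_0=\lambda_\gamma$), and that $\partial_x|_q$ lies on the $\nu$-eigenline of $D_qX$ (so $\partial_xQ|_q=0$, and hence $a_0(0,0)=0$). The first two conditions are intrinsic to the $y$-axis and are inherited from \cref{l:novod}; the third only prescribes $V$ at the single point $q$, so it can be imposed independently. Once these conditions are in place, the normal form and the required vanishing orders follow from elementary Taylor expansion.
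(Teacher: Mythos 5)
Your proposal is correct and follows essentially the same route as the paper: both build adapted coordinates from the center-manifold horizontal kernel extension of Lemma~\ref{l:novod} together with the $\nu$-eigendirection of $D_qX$ (the paper flows the eigencurve along an extension of $\dot\gamma$, you flow $\gamma$ along an extension of $v_1$ --- a cosmetic swap), then read off the first-order conditions from the matrix of $D_qX$ and conclude by the same Hadamard/Taylor decomposition of the components of $X$. The only nitpick is notational: your $b_0(y)=Q(0,y)$ still contains the leading term $\tfrac{\mu}{k!}y^k$, so to match the stated normal form one should set $b_0(y):=Q(0,y)-\tfrac{\mu}{k!}y^k$, exactly as the paper does.
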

\begin{proof}
    Let $q$ be a characteristic point such that $\det(D_qX)=0$. By \cref{r:tresei}, $\nu:=\mathrm{div}(X)|_q=\mathrm{trace}(D_qX)\neq 0$. Thus, the characteristic polynomial of $D_qX:T_qS\to T_qS$ reads
    \begin{equation} \label{eq:ipo1}
        P(\lambda)=\lambda(\lambda-\nu),
    \end{equation}
   and $D_qX$ is diagonalizable at $q$. In particular, there exists a smooth regular curve $\sigma:(-\ve,\ve)\to S$ such that 
    \begin{equation} \label{eq:ipo2}
    	\sigma(0)=q,\qquad D_qX(\dot\sigma(0))=\nu\dot\sigma(0).
    \end{equation}
 Let $\gamma$ be a horizontal kernel extension at $q$ and $T\in\mathfrak X(S)$ be a vector field extending $\dot \gamma$. Since  $\dot \gamma(0)$ and $\dot \sigma(0)$ are linearly independent, for $\ve>0$ small enough, the following map defines coordinates near $q$:
 \begin{equation} 
 	\varphi:(-\ve,\ve)^2\to S,\qquad \varphi(x,y)=e^{yT}\circ\sigma(x).
 \end{equation}
	In such coordinates $q=(0,0)$ and moreover
	\begin{equation}\label{eq:ipo3}
		\dot \sigma(0)= \partial_x,\qquad \gamma(t)=(0,t)\qquad \forall\,t \in(-\ve,\ve).
	\end{equation}
    Hence there exists $a,b:(-\ve,\ve)^2\to\mathbb R$ smooth functions such that 
    \begin{equation}
        X=(\nu x+a(x,y))\partial_x+b(x,y)\partial_y.
    \end{equation}
which, thanks to the assumptions on $D_{q}X$ in \eqref{eq:ipo1}, \eqref{eq:ipo2}, \eqref{eq:ipo3}, also  satisfy 
\begin{equation}\label{eq:a_b_orders}
    a(0,0)=b(0,0)=0,\qquad \nabla a(0,0)=\nabla b(0,0)=0.
\end{equation}
Moreover, for any $t\in(-\ve,\ve)$ the curve $\gamma(t)=(0,t)$ should be an integral curve of $X$, hence $a(0,y)=0$. Writing 
\begin{equation}
	a(x,y)=a(0,y)+x\int_{0}^1\partial_x a(tx,y)dt,
\end{equation}
it follows that there exists a smooth function $a_1(x,y)$ such that 
$
	a(x,y)=xa_1(x,y)
$.
Since $a(0,0)=0$ and $\nabla a(0,0)=0$, we deduce that $a_1(0,0)=0$,  thus, by differentiability of $a_1$ the function $a_{1}$ satisfies \eqref{eq:needed_orders_app_body}. Moreover, since $a(0,y)=0$ the function $\lambda_\gamma$ can be computed as
\begin{equation}
	\lambda_\gamma(t)=g(X|_{(0,t)},\dot\gamma(t))=b(0,t).
\end{equation}
Combining \eqref{eq:a_b_orders} and  the fact that we assume $\mathrm{ord}(q)=k$, then the function $b$ satisfies 
\begin{equation}\label{eq:orders_of_b}
	b(0,0)=0,\qquad \partial_xb(0,0)=0,\qquad \partial_y^jb(0,0)=\begin{cases}
		0,\quad 0\leq j<k,\\
		\mu,\quad j=k.
	\end{cases}
\end{equation}
Repeating the argument above, we can write for the function $b$  
\begin{equation}
	b(x,y)=b(0,y)+x\int_{0}^1\partial_xb(tx,y)dt=\frac{\mu}{k!}y^k+x\int_{0}^1\partial_xb(tx,y)dt+\left(b(0,y)-\frac{\mu}{k!}y^k\right).
\end{equation}
Now we define 
\begin{equation}
	a_0(x,y):=\int_{0}^1\partial_xb(tx,y)dt,\qquad b_0(y):=b(0,y)-\frac{\mu}{k!}y^k.
\end{equation}
By \eqref{eq:orders_of_b}, we have $a_0(0,0)=0$, thus, by differentiability of $a_0$, we obtain $a_0(x,y)=O(\sqrt{x^2+y^2})$. Furthermore, \eqref{eq:orders_of_b} implies that $b_0(y)=O(y^{k+1})$.
\end{proof}
\subsection{Proof of Theorem~\ref{thm:finite_order_intro}}
We first prove $(i)$ and $(iii)$. According to \cref{lem:normal_form_strong_body} there exist coordinates $(x,y)$ near  $q$ such that 
                \begin{equation}
                   X=x(\nu+a_1(x,y))\partial_x+\left(\frac{\mu}{k!}y^k+xa_0(x,y)+b_0(y)\right)\partial_y.
               \end{equation}
               with $a_{0},a_{1},b_{0}$ satisfying \eqref{eq:needed_orders_app_body}. For $s\in[0,1]$, we define $X_s$ as follows (notice that $X_1=X$)
                \begin{equation}\label{eq:explicit_X_s_body}
                   X_s=x(\nu+sa_1(x,y))\partial_x+\left(\frac{\mu}{k!}y^k+sxa_0(x,y)+sb_0(y)\right)\partial_y.
               \end{equation}
                We claim that there exists $\delta>0$ such that for all $s\in [0,1]$ the origin is the only critical point of $X_s$.
              
               Indeed, in a neighborhood of $q$, the singular points of $X_s$ are in bijection with the solutions of the system of equations
                \begin{equation}\label{eq:critical_system}
                    x(\nu+sa_1(x,y))=0, \qquad \frac{\mu}{k!}y^k+sxa_0(x,y)+sb_0(y)=0.
                \end{equation}
                Since $\nu\neq 0$ and $a_1(0,0)=0$, there exists $\delta>0$ such that, for $(x,y)\in B_\delta:=\{x^2+y^2\leq\delta^2\}$ and $s\in [0,1]$ the equation $x(\nu+sa_1(x,y))=0$ has a solution if and only if $x=0$. Substituting $x=0$ in the second equation of \eqref{eq:critical_system}, since $b_0(y)=O(y^{k+1})$, we find $y=0$. Therefore, the claim is proved. 
                
                It follows that the map
                \begin{equation}
                    F:[0,1]\times \partial B_\delta\to\mathbb S^1,\qquad F(s,p)=\left.\frac{X_s}{|X_s|}\right|_p,
                \end{equation}
                is a well defined smooth homotopy. Moreover $X_1=X$, therefore 
                \begin{equation}
                    \mathrm{ind}(q)=\deg\left(\left.\frac{X_0}{|X_0|}\right|_{\partial B_\delta}\right)=\deg\left(\left.\frac{X}{|X|}\right|_{\partial B_\delta}\right).
                \end{equation}
               For $s=0$, one has $X_0=\nu x\partial_x+\frac{\mu}{k!} y^k\partial_y$. According to \cref{lem:normal_form_strong_body}, $\nu=\mathrm{div}_q(X)$ and $\mu =\Lambda^{(k)}(q)$. Applying \cref{lem:index_lem} one gets $ \mathrm{ind}(q)=\mathrm{sign}(\nu\mu)$ for $k$ odd and $ \mathrm{ind}(q)=0$ for $k$ even,  which concludes the proof of $(iii)$.
                
                We now focus on item $(ii)$.
                Without loss of generality, we can assume that $\partial_x,\partial_y$ form an orthonormal basis for the metric on $S$. To simplify the notation, we assume here  $\nu=\mu/k!=1$. Therefore 
                \begin{equation}\label{eq:explicit_|X|^2}
                \begin{aligned}
                    |X|^2&=(x+xa_1(x,y))^2+\left(y^k+xa_0(x,y)+b_0(y)\right)^2.\\
                \end{aligned}
                \end{equation}
            	Notice that, according to \eqref{eq:needed_orders_app_body}, the function $f(y):=1+b_0(y)/y^k$ is smooth and satisfies
	            	\begin{equation}\label{eq:bbb}
            		f(y)=1+O(y).
            	\end{equation}
                Hence, we can compute 
                \begin{equation}
                	\begin{aligned}
                	|X|^2-(x^2+y^{2k})&=(x+xa_1(x,y))^2+\left(y^kf(y)+xa_0(x,y)\right)^2-(x^2+y^{2k})\\
                	&=x^2(a_1^2(x,y)+2a_1(x,y)+a_0^2(x,y))+y^{2k}(f^2(y)-1)+2xy^kf(y)a_0(x,y)\\
                	&=x^2\psi_{1}(x,y)+y^{2k}\phi(y)+2xy^k\psi_{2}(x,y),
                	\end{aligned}
                \end{equation}
Then we can write
                \begin{equation}
                \begin{aligned}
                    \left|\left(\frac{|X|^2}{x^2+y^{2k}}\right)-1\right|&\leq \left|\frac{x^2\psi_{1}(x,y)}{x^2+y^{2k}}\right|+\left|\frac{y^{2k}\phi(y)}{x^2+y^{2k}}\right|+\left|\frac{2xy^{k}\psi_{2}(x,y)}{x^2+y^{2k}}\right|
                    \leq |\psi_{1}(x,y)|+|\phi(y)|+|\psi_{2}(x,y)|.
                \end{aligned}
                \end{equation}
               Since $\psi_{i}(x,y)=O(\sqrt{x^{2}+y^{2}})$ for $i=1,2$ and $\phi(y)=O(y)$,  thanks to \eqref{eq:needed_orders_app_body} and \eqref{eq:bbb}, we deduce that 
                \begin{equation}
                    \lim_{(x,y)\to (0,0)}\left(\frac{|X|^2}{x^2+y^{2k}}\right)=1
                \end{equation}
                concluding the proof of $(ii)$.

            \section{Levi-Civita connection forms and Riemannian $\ve$-frames}\label{sec:ve-frames}
     
 In this section, we review the definition and fundamental properties of the Levi-Civita connection 1-forms, recalling their relation to the indices of isolated zeroes of vector fields.
       \subsection{Levi-Civita connection forms} \label{sec:levi_forms}

We recall that given a  smooth map $F:\mathbb S^1\to \mathbb S^1$ written as follows $F(e^{i\theta})=e^{if(\theta)}$,  where $f:\mathbb R\to\mathbb R$ is smooth, then the degree of $F$, denoted $\mathrm{deg}(F)$, is the following integer
                \begin{equation}\label{eq:grado}
                \mathrm{deg}(F)=\frac{1}{2\pi}(f(2\pi)-f(0))=\frac{1}{2\pi}\int_{0}^{2\pi}f'(\tau)d\tau.
                \end{equation}
                       In what follows,  a smooth ball $B$ in $S$ denotes an open set whose closure is diffeomorphic to a closed Euclidean ball in $\R^{2}$. 
            \begin{defi}
                    Let $(S,g)$ be a smooth Riemannian surface, let $X\in\mathrm{Vec}(S)$ a smooth vector field and let $q\in S$ be  an isolated zero of $X$. Let $B$ be smooth ball such that $q$ is the unique zero of $X|_B$. Consider the following map 
                    \begin{equation}\label{eq:map_varphi_index}
                        F:\partial B\simeq \mathbb S^1\to \mathbb S^1,\qquad \varphi(p)=\frac{X}{|X|}\bigg|_{p}.
                    \end{equation}
                    The index of the critical point $q$ is defined as the degree of the map $F$
                    \begin{equation}
                        \mathrm{ind}(q)=\deg(F).
                    \end{equation}
                \end{defi}

        We now define Levi-Civita connection form associated with a smooth vector field (playing the role of the characteristic vector field) and list some of its classical properties. 
        \begin{prop}\label{prop:levi-civita}
        Let $(S,g)$ be an oriented Riemannian surface and let $X$ be a smooth vector field vanishing on a set $Z\subset S$. On $S\setminus Z$ we define the orthonormal frame 
        \begin{equation} \label{eq:framedim}
            e_1:=\frac{X}{|X|},\qquad e_2:=\frac{JX}{|X|},
        \end{equation}
        where $J:TS\to TS$ is the complex structure of the Riemannian surface $(S,g)$. The Levi-Civita connection form associated with the the frame $e_1,e_2$ is the following differential 1-form $\eta\in\Omega^1(S\setminus Z)$
        \begin{equation}
            \eta(v)=g(\nabla_{v}e_1,e_2)=\frac{1}{|X|^2}g(\nabla_v X,JX),\qquad \forall\,v\in T\left(S\setminus Z\right).
        \end{equation}
        The Levi-Civita connection form satisfies the following properties:
        \begin{enumerate}[label = $(\roman*)$]
        \item Let $\theta_1,\theta_2$ be the coframe dual to $e_1,e_2$, i.e. $\theta_i(e_j)=\delta_{ij}$. Then
        \begin{equation}\label{eq:levi_coords}
        \eta=-c_1\theta_1-c_2\theta_2,
        \end{equation}
        where $c_1,c_2$ are the structural functions satisfying  $d\theta_i=c_i\theta_1\wedge \theta_2$, for $i=1,2$.
        \item Let $K$ denote the Gaussian curvature and $\sigma$ the area form of $(S,g)$, then on $S\setminus Z$
            \begin{equation}\label{eq:levi_cartan}
                d\eta=K\sigma.
            \end{equation}
        \item Assume $q$ is an isolated zero of $X$ and let  $\{C_\delta\}_{\delta>0}$ be a family of smooth embedded circles in $S$ enclosing $q$ and converging uniformly to it. Then 
            \begin{equation}\label{eq:levi_int_ind}
                \lim_{\delta\to 0}\int_{C_\delta}\eta=2\pi\,\mathrm{ind}(q).
            \end{equation}
        \end{enumerate}
        \end{prop}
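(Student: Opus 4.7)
The plan for part (i) is to invoke Cartan's first structure equations for the oriented orthonormal frame $\{e_1,e_2\}$. On a Riemannian surface these take the form
\begin{equation*}
d\theta_1 = -\eta\wedge\theta_2, \qquad d\theta_2 = \eta\wedge\theta_1,
\end{equation*}
so expanding $\eta = \alpha_1\theta_1 + \alpha_2\theta_2$ and matching with the given identities $d\theta_i = c_i\theta_1\wedge\theta_2$ forces $\alpha_i = -c_i$, which is precisely \eqref{eq:levi_coords}. For part (ii), I will invoke Cartan's second structure equation, which in two dimensions reduces to $d\eta = \Omega^1_2$, where the curvature 2-form satisfies $\Omega^1_2(e_1,e_2) = g(R(e_1,e_2)e_1,e_2) = K$ by definition of the Gaussian curvature. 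Since $\sigma(e_1,e_2) = 1$ by construction of the area form from the oriented orthonormal frame, this yields $d\eta = K\sigma$ on $S\setminus Z$.

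For part (iii), my strategy is to compare $\eta$ against a reference connection form associated with a smooth orthonormal frame defined on an entire neighborhood of $q$. Concretely, I fix a smooth positively oriented orthonormal frame $\{\tilde e_1,\tilde e_2\}$ on a neighborhood $U\ni q$, possible because $S$ is a smooth oriented Riemannian surface, and let $\tilde\eta\in\Omega^1(U)$ be its Levi-Civita connection form, which is smooth up to and including $q$. On $U\setminus\{q\}$ the frame $\{e_1,e_2\}$ is obtained from $\{\tilde e_1,\tilde e_2\}$ by a pointwise rotation, so there is a smooth angular function $\theta:U\setminus\{q\}\to\mathbb R/2\pi\mathbb Z$ with $e_1 = \cos\theta\,\tilde e_1 + \sin\theta\,\tilde e_2$. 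A direct computation of $g(\nabla_v e_1, e_2)$, expanding via Leibniz and using $g(\nabla_v\tilde e_1,\tilde e_2) = -g(\tilde e_1,\nabla_v\tilde e_2) = \tilde\eta(v)$, will produce the key identity
\begin{equation*}
\eta = \tilde\eta + d\theta \qquad \text{on } U\setminus\{q\}.
\end{equation*}

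With this in hand I split $\int_{C_\delta}\eta = \int_{C_\delta}\tilde\eta + \int_{C_\delta}d\theta$ for any $C_\delta\subset U$ enclosing $q$. For the first summand, since $\tilde\eta$ extends smoothly across $q$, Stokes applied to the disk $D_\delta$ bounded by $C_\delta$ gives $\bigl|\int_{C_\delta}\tilde\eta\bigr| = \bigl|\int_{D_\delta}d\tilde\eta\bigr| \leq \|d\tilde\eta\|_{L^\infty(U)}\cdot\mathrm{Area}(D_\delta)$, which vanishes as $\delta\to 0$. For the second summand, $\theta$ is by construction the argument of $e_1 = X/|X|$ relative to the fixed reference frame, so its total variation along $C_\delta$ equals $2\pi$ times the winding number of the map $F$ in \eqref{eq:map_varphi_index}, i.e., $2\pi\,\mathrm{ind}(q)$. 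The main obstacle I anticipate is establishing $\eta = \tilde\eta + d\theta$ with the correct sign, which demands careful bookkeeping of the orthonormality relations and the orientation convention tying $e_2 = JX/|X|$ to the degree definition of the index; everything else is a routine application of Stokes and the structure equations.
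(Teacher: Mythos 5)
Your proposal is correct and follows essentially the same route as the paper: part (i) via the structure equations (the paper's direct computation of $\eta(e_i)=-c_i$ is just the unpacked version of Cartan's first equation), part (ii) by citation of the standard curvature identity, and part (iii) by comparing $\eta$ with the connection form of a smooth reference frame near $q$, so that $\eta-\tilde\eta=d\theta$ with $\theta$ the angle of $e_1$ relative to the reference frame, Stokes killing the $\tilde\eta$ term and the angular term producing $2\pi\,\mathrm{ind}(q)$. The sign bookkeeping you flag works out exactly as in the paper's computation \eqref{eq:eta_difference}.
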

        \begin{proof}
              
                $(i),(ii)$. These are standard properties of the Levi-Civita form. See for instance \cite[Ex.\,3.4.28]{Petersen} or \cite[Sec.~4.4.1]{Agrachev} for details. For completeness we prove $(i)$.

                 The forms $\theta_1,\theta_2$ form a basis of 1-forms on $S\setminus Z$, therefore 
            $
                \eta=\eta(e_1)\theta_1+\eta(e_2)\theta_2.
            $
           We can then compute the coefficient
            \begin{equation}
            \begin{aligned}
                \eta(e_1)&=g(\nabla_{e_1}e_1,e_2)=-g(e_1,\nabla_{e_1}e_2)=-g(e_1,\nabla_{e_2}e_1+[e_1,e_2])=-c_1.
                \end{aligned}
                \end{equation}
               Where we used the fact that $\nabla$ is metric and torsion free, and $g(e_{i},e_{j})$ is constant. Repeating analogous computations  one obtains $\eta(e_2)=-c_2$.

                $(iii)$. If $\delta>0$ is small enough we may assume that $q$ is the only zero of $X|_{B_\delta}$, where $B_\delta$ is the connected component of $S\setminus C_{\delta}$ containing $q$, so that $C_\delta=\partial B_\delta$. We would like to express  in coordinates 
                \begin{equation}\label{eq:varphi_index}
                    F:\partial B_\delta\to \mathbb S^1,\qquad F(p)=\frac{X}{|X|}\bigg|_{p}=e_{1}|_{p}.
                \end{equation}
               Fix a regular parametrization 
               $
                    \gamma:[0,2\pi]\to  C_\delta
              $ of $\partial B_\delta=C_\delta$. 
                Choosing $\delta>0$ small enough, consider an oriented smooth orthonormal frame $e_1',e_2'$ on $B_\delta$ and a function $\phi:[0,2\pi]\to \mathbb R$ such that
                \begin{equation}\label{eq:map_varphi}
                    F(\gamma(t))=\cos\phi(t)e_1'+\sin\phi(t)e_2',\qquad \forall\,t\in [0,2\pi].
                \end{equation}
                Moreover, by \eqref{eq:grado}, one has 
                \begin{equation}
                    \phi(2\pi)-\phi(0)=2\pi\deg(F)=2\pi\,\mathrm{ind}(q).
                \end{equation}
                Consider the Levi-Civita connection form $\eta'$ associated with the frame $e_1',e_2'$, namely
                \begin{equation}
                    \eta'(v)=g(\nabla_v e_1',e_2'),\qquad \forall\,v\in TB_\delta.
                \end{equation}
            Note that due to \eqref{eq:varphi_index} and \eqref{eq:framedim}, along $\gamma(t)$, one has
                $$e_{1}'=\cos\phi(t)\, e_1-\sin\phi(t) \,e_2,\qquad e_{2}'=\sin\phi(t) \,e_1+\cos\phi(t)\,e_2$$
                 We stress that $e_1',e_2'$ is smooth on $S$ while $e_1,e_2$ is  singular on $Z$. For any $t\in[0,2\pi]$ we can compute 
                \begin{equation}\label{eq:eta_difference}
                \begin{aligned}
                    \eta'(\dot\gamma(t))&=g(\nabla_{\dot\gamma(t)} e_1',e_2')=g(\nabla_{\dot\gamma(t)} (\cos\phi(t)\, e_1-\sin\phi(t) \,e_2),\sin\phi(t) \,e_1+\cos\phi(t)\,e_2)\\
                    &=-\dot \phi(t)+g(\nabla_v e_1,e_2)=-\dot \phi(t)+\eta(\dot\gamma(t)).
                \end{aligned}
                \end{equation}
                Notice that, since $\eta'$ is smooth on $B_\delta$, applying Stokes theorem we find by absolute continuity of the integral
                \begin{equation}\label{eq:acoi}
                    \lim_{\delta\to 0}\int_{\partial B_\delta}\eta'=\lim_{\delta\to 0}\int_{B_\delta}d\eta'=0.
                \end{equation}
                We next  combine \eqref{eq:acoi} together with \eqref{eq:eta_difference} to compute the  limit
                \begin{equation}
                \begin{aligned}
                    \lim_{\delta\to 0}\int_{\partial B_\delta}\eta&=\lim_{\delta\to 0}\int_{\partial B_\delta}\eta-\eta'=\lim_{\delta\to 0}\int_{0}^{2\pi}\left(\eta(\dot\gamma(t))-\eta'(\dot\gamma(t))\right)dt\\
                    &=\lim_{\delta\to 0}\int_{0}^{2\pi}\dot {\phi}(t)dt=\phi(2\pi)-\phi(0)=2\pi\,\mathrm{ind}(q). \hfill  \qedhere
                    \end{aligned} 
                \end{equation}
        \end{proof}
            \subsection{Riemannian $\ve$-frames}\label{sec:ve_frame}
            
            We apply the construction of Section~\ref{sec:levi_forms} to the case when $X$ is the characteristic vector field, hence $Z=\Sigma(S)$ the characteristic set, and the metrics are given by the Riemannian approximations $g^\ve$ inducing Riemannian structures  $(S,g^\ve|_S)$.      
 We denote by $|\cdot|_\ve$  the norm induced by $g^\ve$ and $J^\ve:TS\to TS$ is the corresponding complex structure.
            \begin{defi}\label{defi:ve-Riemannian_frames}
            Let $(M,\mathcal D,g)$ be a contact sub-Riemannian manifold and $S$ an embedded oriented surface with characteristic vector field $X$.  We define the Riemannian $\ve$-frame on $S\setminus \Sigma(S)$
            \begin{equation}
                e_1^\ve=\frac{X}{|X|_{\ve}},\qquad  e_2^\ve=\frac{J^\ve X}{|X|_{\ve}},
            \end{equation}
             We also denote the associated dual basis of 1-forms  $\theta_1^\ve,\theta_2^\ve$  on $S\setminus \Sigma(S)$ satisfying
            \begin{equation}
              \theta_i^\ve(e_j^\ve)=\delta_{ij},\qquad i,j=1,2.
            \end{equation}
                \end{defi}
            For simplicity of notation, when $\ve=1$ we will omit superscripts
            \begin{equation}\label{eq:1-Riemannian_framing}
                e_i:=e_i^1,\qquad \theta_i:=\theta_i^1,\qquad i=1,2.
            \end{equation}
        \begin{remark}\label{rmk:theta_2}
        	By definition \eqref{eq:def_X} of characteristic vector field $X$, outside the characteristic set  $\ker\theta_2=\ker\omega|_S$, where $\omega$ is the normalized contact form. Therefore there exists a function $f$ smooth  outside of the characteristic set, such that 
        	\begin{equation}
        		\omega=f\theta_2.
        	\end{equation}
        	Let $\sigma$ be the area form of $g^1|_S$. Using $\theta_2(e_2)=1$  and $\sigma(e_1,e_2)=1$, we may compute the function $f$
        	\begin{equation}
        		f=\omega(e_2)=\sigma(X,e_2)=\sigma(|X|e_1,e_2)=|X|.
        	\end{equation}
        \end{remark}
         \begin{lemma}
            Let $(M,\mathcal D,g)$ be a contact sub-Riemannian manifold and $S$ and embedded oriented surface with characteristic vector field $X$. For any $\ve>0$ let $b_\ve:S\to \mathbb R$ denote the following function 
            \begin{equation}\label{eq:b_ve}
                b_\ve=\sqrt{1-\mathrm{div}(X)^2(1-\ve)}=\sqrt{1-(1-|X|^2)(1-\ve)}.
            \end{equation}
            The the Riemannian $\ve$-coframe of \cref{defi:ve-Riemannian_frames} satisfies
            \begin{equation} \label{eq:dual_perturbed}
                \theta_1^\ve=\theta_1,\qquad \theta_2^\ve=\frac{b_\ve}{\sqrt{\ve}}\theta_2.
            \end{equation}
        \end{lemma}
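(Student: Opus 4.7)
The strategy is to identify the $g^\ve$-orthonormal frame $(e_1^\ve, e_2^\ve)$ explicitly in terms of the $g^1$-orthonormal frame $(e_1, e_2)$, and then pass to the dual coframes. The starting observation is that outside $\Sigma(S)$ the characteristic vector field $X$ lies in $TS\cap\mathcal{D}$, so it is horizontal. Since $g^\ve|_{\mathcal{D}}=g$ by \eqref{eq:g_ve_properties}, the norm of $X$ does not depend on $\ve$ and
$$e_1^\ve \;=\; \frac{X}{|X|_\ve} \;=\; \frac{X}{|X|} \;=\; e_1.$$

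The main step is to show that $e_2^\ve$ is a positive multiple of $e_2$. For this I would invoke the decomposition obtained in the proof of Proposition~\ref{prop:div=pm}, where it was established that on $S\setminus\Sigma(S)$
$$e_2 \;=\; a\,f_2 + \sqrt{1-a^2}\,f_0, \qquad a := \mathrm{div}(X),$$
with $f_2$ the oriented horizontal normal and $f_0$ the Reeb field. Because $\{f_1,f_2\}\subset\mathcal{D}$ and $g^\ve(f_0,\mathcal{D})=0$ for every $\ve>0$, a direct check yields $g^\ve(e_1,e_2)=0$ regardless of $\ve$. Since $TS$ is two-dimensional and $e_1^\ve=e_1$, the unit (in $g^\ve$) vector $e_2^\ve\in TS$ that is $g^\ve$-orthogonal to $e_1$ must be proportional to $e_2$; positivity of the proportionality constant follows from the fact that both $(e_1^\ve,e_2^\ve)$ and $(e_1,e_2)$ are positively oriented frames. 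The constant is then fixed by normalization: using $g^\ve(f_0,f_0)=1/\ve$ one computes
$$|e_2|_\ve^2 \;=\; a^2 + \frac{1-a^2}{\ve} \;=\; \frac{1-a^2(1-\ve)}{\ve} \;=\; \frac{b_\ve^2}{\ve},$$
so $e_2^\ve=(\sqrt{\ve}/b_\ve)\,e_2$.

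The dual identities \eqref{eq:dual_perturbed} then follow by checking the four duality relations $\theta_i^\ve(e_j^\ve)=\delta_{ij}$: from $e_1^\ve=e_1$ and $e_2^\ve\parallel e_2$ one sees that $\theta_1$ satisfies $\theta_1(e_1^\ve)=1$ and $\theta_1(e_2^\ve)=0$, hence $\theta_1^\ve=\theta_1$; analogously $\theta_2^\ve(e_1)=0$ and $\theta_2^\ve(e_2)=b_\ve/\sqrt{\ve}$, giving $\theta_2^\ve=(b_\ve/\sqrt{\ve})\theta_2$. There is no substantive obstacle in this argument: the key ingredient is the orthogonal decomposition of $e_2$ in the $(f_2,f_0)$-basis, already derived in Proposition~\ref{prop:div=pm}, and everything else reduces to elementary two-dimensional linear algebra together with the explicit form of $g^\ve$.
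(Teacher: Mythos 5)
Your proof is correct and follows essentially the same route as the paper: both arguments rest on the decomposition $e_2=a f_2+\sqrt{1-a^2}f_0$ with $a=\mathrm{div}(X)$ (which the paper re-derives inside the proof while you import it from Proposition~\ref{prop:div=pm}), the computation $g^\ve(e_2,e_2)=a^2+(1-a^2)/\ve=b_\ve^2/\ve$, and dualization. No gaps.
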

        \begin{proof}
        We denote with $f_1=X/|X|$ the normalized characteristic vector field of $S$, with $f_2$ the oriented horizontal orthonormal, and with $f_0$ the Reeb vector field. Denoting with $\nu_1,\nu_2,\nu_0$ the corresponding dual coframe, i.e. $\nu_i(f_j)=\delta_{ij}$, according to \eqref{eq:g_ve_properties} the Riemannian metric $g^\ve$ reads
        \begin{equation}\label{eq:g_ve_nu_i}
            g_\ve=\nu_1\otimes\nu_1+\nu_2
            \otimes\nu_2+\frac{1}{\ve}\nu_0\otimes\nu_0.
        \end{equation}
        Let $e_1,e_2$ be the Riemannian frame of equation \eqref{eq:1-Riemannian_framing}, i.e., $e_1=f_1$ and $e_2$ is the oriented orthonormal complement obtained with respect to the metric $g^1|_S$. Substituting $\ve=1$ in the expression \eqref{eq:g_ve_nu_i} we deduce the existence of a smooth function $a:S\setminus{\Sigma(S)}\to [-1,1]$ such that 
        \begin{equation}\label{eq:frame_ve=1}
            e_1=f_1=\frac{X}{|X|},\qquad e_2=af_2+\sqrt{1-a^2}f_0.
        \end{equation}
        Denoting with $\sigma$ the area form induced by $g^1$ on $S$, since $\sigma(e_1,e_2)=1$ we can compute
        \begin{equation}
                \begin{aligned}
                |X|^2&=|X|^2\sigma(e_1,e_2)=|X|\sigma(X,e_2)=|X|\iota_X\sigma(e_2)=|X|\omega(e_2)=|X|\sqrt{1-a^2},
                \end{aligned}
            \end{equation}
        where in the fourth equality we have used the definition of the characteristic vector field \eqref{eq:def_X}, and in the last one the fact that $\omega(f_2)=0$ and $\omega(f_0)=1$. We deduce that
        \begin{equation}\label{eq:computation_of_a}
            a^2=1-|X|^2=\mathrm{div}(X)^2,
        \end{equation}
        where the last equality is obtained from equation \eqref{eq:div(X)^2}. Note that
            \begin{equation}
                g^\ve(e_1,e_1)=1,\qquad g^\ve(e_2,e_1)=0,\qquad g^\ve(e_2,e_2)=a^2+\frac{1}{\ve}(1-a^2)=\frac{b_\ve^2}{\ve},
            \end{equation}
            therefore, the Riemannian $\ve$-frame of \cref{defi:ve-Riemannian_frames} can be computed as
            \begin{equation}\label{eq:controvariant_ve}
                e_1^\ve=e_1,\qquad e_2^\ve=\frac{\sqrt{\ve}}{b_\ve}e_2,
            \end{equation}
           and, consequently, the associate coframe has the expression  \eqref{eq:dual_perturbed}.
        \end{proof}
      
  \subsection{Estimates on Levi-Civita forms}

            Given an oriented Riemannian surface $(S,g)$ and a differential form $\eta\in \Omega^1(S)$ we define the norm of $\eta$ at the point $q\in S$ as
        \begin{equation}
            \|\eta\|(q)=\sup_{v\in T_qS,\,v\neq 0}\frac{|\eta_q(v)|}{|v|},
        \end{equation}
        where $|v|=\sqrt{g(v,v)}$. In particular, the norm of a differential 1-form is a function 
       $            \|\eta\|:S\to\mathbb R
        $. 
       
        Observe that for $\alpha,\beta\in\Omega^1(S)$, and $\sigma$ is the area form of $(S,g)$, the following inequality holds 
        \begin{equation}\label{eq:determinant_inequality}
            \left|\int_S\alpha\wedge\beta\,\right|\leq \int_S\|\alpha\|\|\beta\|\sigma.
        \end{equation}

          We now compute the Levi-Civita connection form $\eta^\ve$ associated with a Riemannian $\ve$-frame of \cref{defi:ve-Riemannian_frames}, and analyse some of its properties.
      \begin{lemma}\label{lem:levi_ve0}
             Let $\eta^\ve$ be the Levi-Civita connection 1-form  associated with the Riemannian $\ve$-frame. Then
            \begin{equation}\label{eq:levi_ve}
                \eta^\ve=-\frac{\sqrt{\ve}}{b_\ve}c_1\theta_1-c_2\frac{b_\ve}{\sqrt{\ve}}\theta_2-\frac{e_1(b_\ve)}{\sqrt{\ve}}\theta_2,
            \end{equation}
            where $c_1,c_2\in C^{\infty}(S\setminus \Sigma(S))$ satisfies $d\theta_i =c_i\, \theta_1\wedge \theta_2$, $i=1,2$, and $b_\ve$ is  defined in \eqref{eq:b_ve}.  
        \end{lemma}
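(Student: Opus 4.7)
The strategy is to apply item $(i)$ of Proposition~\ref{prop:levi-civita} directly to the Riemannian $\ve$-frame, which gives
\begin{equation*}
\eta^\ve = -c_1^\ve\theta_1^\ve - c_2^\ve\theta_2^\ve,\qquad d\theta_i^\ve = c_i^\ve\,\theta_1^\ve\wedge\theta_2^\ve,\quad i=1,2.
\end{equation*}
Hence the whole task reduces to computing the structural functions $c_1^\ve, c_2^\ve$ in terms of the unperturbed ones $c_1,c_2$ and the function $b_\ve$, by using the relations $\theta_1^\ve=\theta_1$ and $\theta_2^\ve=\tfrac{b_\ve}{\sqrt\ve}\theta_2$ provided by \eqref{eq:dual_perturbed}.

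For $c_1^\ve$, I would differentiate $\theta_1^\ve=\theta_1$ to get $d\theta_1^\ve=d\theta_1=c_1\,\theta_1\wedge\theta_2$, and then rewrite the area form in the $\ve$-basis via $\theta_1\wedge\theta_2=\tfrac{\sqrt\ve}{b_\ve}\,\theta_1^\ve\wedge\theta_2^\ve$. This yields $c_1^\ve=\tfrac{\sqrt\ve}{b_\ve}c_1$. For $c_2^\ve$, expand
\begin{equation*}
d\theta_2^\ve = d\!\left(\tfrac{b_\ve}{\sqrt\ve}\theta_2\right) = \tfrac{1}{\sqrt\ve}\,db_\ve\wedge\theta_2 + \tfrac{b_\ve}{\sqrt\ve}\,d\theta_2,
\end{equation*}
and use the decomposition $db_\ve=e_1(b_\ve)\theta_1+e_2(b_\ve)\theta_2$ (which follows from $\theta_i(e_j)=\delta_{ij}$) so that $db_\ve\wedge\theta_2=e_1(b_\ve)\,\theta_1\wedge\theta_2$. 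Converting again to the $\ve$-basis gives $c_2^\ve = c_2 + e_1(b_\ve)/b_\ve$.

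Plugging these into $\eta^\ve=-c_1^\ve\theta_1^\ve-c_2^\ve\theta_2^\ve$ and using \eqref{eq:dual_perturbed} produces the three terms in \eqref{eq:levi_ve} exactly. There is no genuine obstacle here; the only point requiring a bit of care is the bookkeeping of the factors $\sqrt\ve/b_\ve$ when passing between the two volume forms $\theta_1\wedge\theta_2$ and $\theta_1^\ve\wedge\theta_2^\ve$, and the observation that in the expansion of $db_\ve$ only the $\theta_1$-component survives the wedge with $\theta_2$, which is what creates the extra term $-e_1(b_\ve)/\sqrt{\ve}\,\theta_2$ absent in the unperturbed formula.
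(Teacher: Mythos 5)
Your proposal is correct and follows essentially the same route as the paper: compute $d\theta_1^\ve$ and $d\theta_2^\ve$ from \eqref{eq:dual_perturbed}, read off the structural functions $c_1^\ve=\tfrac{\sqrt\ve}{b_\ve}c_1$ and $c_2^\ve=c_2+e_1(b_\ve)/b_\ve$, and substitute into $\eta^\ve=-c_1^\ve\theta_1^\ve-c_2^\ve\theta_2^\ve$ from item $(i)$ of Proposition~\ref{prop:levi-civita}. The bookkeeping of the conversion factor $\tfrac{\sqrt\ve}{b_\ve}$ between $\theta_1\wedge\theta_2$ and $\theta_1^\ve\wedge\theta_2^\ve$, and the observation that only the $e_1(b_\ve)\theta_1$ component of $db_\ve$ survives the wedge with $\theta_2$, are exactly the points the paper's proof relies on.
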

        \begin{proof}
 We compute the differentials of the coframe $\theta_1^\ve, \theta_2^\ve$  in \eqref{eq:dual_perturbed}:
        \begin{equation}\label{eq:c1_per}
            d\theta_1^\ve=c_1\theta_1\wedge\theta_2=\frac{\sqrt{\ve}}{b_\ve}c_1\theta_1^\ve\wedge\theta_2^\ve,
        \end{equation}
        Analogously for $\theta_2^\ve=(b_\ve/\sqrt{\ve})\theta_2$ we get:
        \begin{equation}\label{eq:c2_per}
            d\theta_2^\ve=\frac{1}{\sqrt{\ve}}e_1(b_\ve)\theta_1\wedge\theta_2+\frac{b_\ve}{\sqrt{\ve}}c_2\theta_1\wedge\theta_2=\left(\frac{e_1(b_\ve)}{b_\ve}+c_2\right)\theta_1^\ve\wedge\theta_2^\ve.
        \end{equation}
        According to \eqref{eq:levi_coords}, the Levi-Civita connection form associated with the frame $e_1^\ve,e_2^\ve$ reads
        \begin{equation}
            \eta_\ve=-c_1^\ve\theta_1^\ve-c_2^\ve\theta_2^\ve,\qquad c_{1}^{\ve}=\frac{\sqrt{\ve}}{b_\ve}c_1, \quad c_{2}^{\ve}=\left(\frac{e_1(b_\ve)}{b_\ve}+c_2\right).
        \end{equation}
        Substituting the expression for $\theta_1^\ve,\theta_2^\ve$ computed in equation \eqref{eq:dual_perturbed}, we get \eqref{eq:levi_ve}. 
     \end{proof}
     We are now ready to prove the following crucial estimates. 
        \begin{prop}\label{lem:levi_ve}
        Let $\eta^\ve$ be the Levi-Civita connection 1-form  associated with the Riemannian $\ve$-frame. The following assertions hold:
            \begin{itemize}
            \item[(i)] the form $\alpha\in\Omega^1(S\setminus \Sigma(S))$ defined by 
            \begin{equation}\label{eq:alpha}
                \alpha:=\lim_{\ve\to 0}\sqrt\ve\, \eta^\ve=-\frac{\mathrm{div}(X)}{|X|} \omega|_{S},
            \end{equation}
             is smooth on $S\setminus \Sigma(S)$ and uniformly bounded in norm. Furthermore
            \begin{equation}\label{eq:lim_diff=zero}
                \lim_{\ve\to 0}\left.\left(\eta^\ve-\frac{\alpha}{\sqrt\ve}\right)\right|_q=0,\qquad \forall\,q\in S\setminus\Sigma(S).
            \end{equation}
            \item[(ii)] There exists a positive constant $C>0$, independent of $\ve$, such that
            \begin{equation}\label{eq:dom_conv}
                 \left \|d\alpha\right\|\leq \frac{C}{|X|},\qquad \left \|\eta_\ve-\frac{\alpha}{\sqrt\ve}\right\|\leq \frac{C}{|X|},
            \end{equation}
            where the norms are computed with respect to the metric $g^1|_S$ of \cref{def:g_ve}.
            \end{itemize}

        \end{prop}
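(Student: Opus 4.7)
My plan is to start from the explicit formula for $\eta^\ve$ given in Lemma~\ref{lem:levi_ve0}. Multiplying by $\sqrt\ve$ and using the identity $b_\ve^2 = |X|^2 + \ve\,\mathrm{div}(X)^2$ (which follows from \eqref{eq:b_ve} and Proposition~\ref{prop:div=pm}), one has $b_\ve\to|X|$ pointwise on $S\setminus\Sigma(S)$, so that
\[
\sqrt\ve\,\eta^\ve\ \longrightarrow\ -\bigl(c_2|X|+e_1(|X|)\bigr)\theta_2.
\]
To identify the right-hand side with $\alpha$, I would show $c_2 = \mathrm{div}(e_1)$ (which follows from $\mathrm{div}(e_1)\sigma = L_{e_1}\sigma = d\theta_2 = c_2\sigma$), apply the Leibniz rule to get $\mathrm{div}(X) = |X|c_2 + e_1(|X|)$, and rewrite using Remark~\ref{rmk:theta_2} ($\omega|_S = |X|\theta_2$). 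This yields $\alpha = -(\mathrm{div}(X)/|X|)\omega|_S$, with uniform bound $\|\alpha\| = |\mathrm{div}(X)|\leq 1$ coming from Proposition~\ref{prop:div=pm}; smoothness on $S\setminus\Sigma(S)$ is clear. For the pointwise convergence~\eqref{eq:lim_diff=zero}, regroup
\[
\eta^\ve - \frac{\alpha}{\sqrt\ve}\ =\ -\frac{\sqrt\ve}{b_\ve}c_1\theta_1\ +\ \frac{1}{\sqrt\ve}\bigl[c_2(|X|-b_\ve) + e_1(|X|-b_\ve)\bigr]\theta_2,
\]
and use $b_\ve - |X| = \ve\,\mathrm{div}(X)^2/(b_\ve+|X|) = O(\ve)$ at a point where $|X|$ is bounded away from zero.

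For the quantitative estimates in (ii), the key local ingredient is the bound $|c_i|\leq C/|X|$ near characteristic points. I would derive it from the identity $-c_i = \eta(e_i) = |X|^{-2}g(\nabla_{e_i}X, JX)$ for the Levi-Civita form of the frame $(X/|X|, JX/|X|)$ on $S\setminus\Sigma(S)$. Since $\nabla X$ is smooth on $S$ and $|JX| = |X|$, one obtains a local bound $|\eta(e_i)|\leq M/|X|$. The bound on $d\alpha$ then follows from
\[
d\alpha\ =\ -\bigl[e_1(\mathrm{div}(X)) + \mathrm{div}(X)\,c_2\bigr]\theta_1\wedge\theta_2,
\]
where the first summand is uniformly bounded (because $\mathrm{div}(X)\in C^\infty(S)$) and the second is controlled by $|c_2|\leq C/|X|$; since $|X|\leq 1$ globally by Proposition~\ref{prop:div=pm}, the uniform part is absorbed into the singular one.

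The harder estimate is that of $\eta^\ve-\alpha/\sqrt\ve$, where I would analyse the three terms in the regrouped expression separately. The term $(\sqrt\ve/b_\ve)c_1\theta_1$ is handled using $b_\ve\geq \sqrt\ve\,|\mathrm{div}(X)|$: since $\mathrm{div}(X)^2 = 1-|X|^2\to 1$ near $\Sigma(S)$, the ratio $\sqrt\ve/b_\ve$ is uniformly bounded and the term is dominated by $|c_1|\leq C/|X|$. The term $c_2(|X|-b_\ve)/\sqrt\ve$ uses the exact identity $b_\ve - |X| = \ve\,\mathrm{div}(X)^2/(b_\ve+|X|)$ together with $b_\ve\geq \sqrt\ve\,|\mathrm{div}(X)|$ to give $|b_\ve-|X||/\sqrt\ve \leq |\mathrm{div}(X)|\leq 1$, so it is again dominated by $|c_2|\leq C/|X|$. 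The last and most delicate term involves $e_1(|X|-b_\ve)/\sqrt\ve$; here I would use the chain rule identity
\[
e_1(b_\ve) - e_1(|X|)\ =\ \frac{e_1(|X|^2)(|X|-b_\ve)}{2|X|b_\ve}\ +\ \frac{\ve\,\mathrm{div}(X)\,e_1(\mathrm{div}(X))}{b_\ve},
\]
together with $|e_1(|X|^2)|\leq 2|X|\,|\nabla X|_\infty$. The main obstacle is that this quantity does not decay faster than $O(1)$, so the bound by $C/|X|$ after dividing by $\sqrt\ve$ has to be recovered by distinguishing the regimes $|X|\gtrsim\sqrt\ve$ and $|X|\lesssim\sqrt\ve$, reconciling the two simultaneous singularities as $|X|\to 0$ and $\ve\to 0$.
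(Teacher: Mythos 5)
Your proposal follows essentially the same route as the paper: part (i) via the explicit formula of Lemma~\ref{lem:levi_ve0} and the identification $c_2b_0+e_1(b_0)=\mathrm{div}(X)$, and part (ii) via the bounds $|c_i|\le C/|X|$ together with the regrouped expression for $\eta^\ve-\alpha/\sqrt\ve$. The one place you stop short --- the term $e_1(b_\ve-|X|)/\sqrt\ve$, which you flag as requiring a regime split between $|X|\gtrsim\sqrt\ve$ and $|X|\lesssim\sqrt\ve$ --- is not actually an obstacle: from your own chain-rule identity, using $|e_1(|X|^2)|\le 2C|X|$, the global inequalities $|b_\ve-|X||\le\sqrt\ve\le b_\ve$ (note $b_\ve^2=|X|^2+\ve\,\mathrm{div}(X)^2\ge\ve$, which also cleans up your treatment of the $c_1$ term) and $b_\ve\ge|X|$, the first summand divided by $\sqrt\ve$ is at most $C/b_\ve\le C/|X|$ and the second is at most $C\sqrt\ve/b_\ve\le C\le C/|X|$, with no case distinction needed; this is exactly how the paper closes the estimate.
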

        \begin{proof} 

$(i)$. Notice that the function $(q,\ve)\mapsto b_\ve(q)$, defined in equation \eqref{eq:b_ve}, is strictly positive outside of the characteristic set, and smooth in both variables $(q,\ve)\in (S\setminus \Sigma(S))\times [0,1]$. Therefore, for any non characteristic point $q\in S$ we have 
        \begin{equation}\label{eq:lim_alpha}
            \lim_{\ve\to 0}\sqrt\ve \,\eta^\ve_q=-\lim_{\ve\to 0}\left.\left(\frac{\ve}{b_\ve}c_1\theta_1+c_2b_\ve\theta_2+e_1(b_\ve)\theta_2\right)\right|_q=-\left.\left(c_2b_0\theta_2+e_1(b_0)\theta_2\right)\right|_q=\alpha_q.
        \end{equation}
        Let us compute the coefficient $c_{2}$ satisfying $d\theta_2=c_2\theta_1\wedge\theta_2$. Writing $\theta_2=\iota_{e_1}\theta_1\wedge\theta_2$ and recalling the definition the divergence \eqref{eq:div(X)_formula0} one has
        \begin{equation}\label{eq:c_2_computation}
        \begin{aligned}
            c_2&=d\theta_2(e_1,e_2)=d\left(\iota_{e_1}\theta_1\wedge\theta_2\right)(e_1,e_2)=\mathrm{div}(e_1)=\mathrm{div}\left(\frac{X}{|X|}\right)\\
            &=X(|X|^{-1})+|X|^{-1}\mathrm{div}(X)=-|X|^{-3}g(\nabla_XX,X)+|X|^{-1}\mathrm{div}(X)\\
            &=-|X|^{-1}g(\nabla_{e_1}X,e_1)+|X|^{-1}\mathrm{div}(X),
        \end{aligned}
        \end{equation}
   where we used the formula $\mathrm{div}(fY)=Y(f)+f\mathrm{div}(Y)$, which holds for every smooth function $f$ and vector field $Y$.  By definition of $b_\ve$ in \eqref{eq:b_ve} we have $b_0=|X|$, therefore 
        \begin{equation}\label{eq:|X|c2}
            b_0c_2=|X|c_2=-g(\nabla_{e_1}X,e_1)+\mathrm{div}(X).
        \end{equation}
        Next we compute the term $e_1(b_0)$:
        \begin{equation}\label{eq:e1(b0)}
            e_1(b_0)=|X|^{-1}X(|X|)=|X|^{-2}g(\nabla_XX,X)=g(\nabla_{e_1}X,e_1).
        \end{equation}
        Combining equations \eqref{eq:lim_alpha}, \eqref{eq:|X|c2} and \eqref{eq:e1(b0)} and $\theta_{2}=\omega/|X|$ (cf.\ \cref{rmk:theta_2}), we deduce
        \begin{equation}
            \alpha_q=-(c_2b_0+e_1(b_0))\theta_2|_q=-\mathrm{div}(X)\,\theta_2|_q=-\frac{\mathrm{div}(X)}{|X|}\omega|_{q}.
        \end{equation}
        Since $\|\theta_2\|=1$ and $\mathrm{div}(X)$ is smooth  and well defined on the whole $S$, we deduce that $\alpha$ is uniformly bounded in norm.
        To prove \eqref{eq:lim_diff=zero}, we fix a non characteristic point $q\in S\setminus \Sigma(S)$ and consider the map \eqref{eq:b_ve} as a function of $\ve$:
            \begin{equation}\label{eq:bve(ve)}
              [0,1]\ni\ve\mapsto b_\ve(q)=\sqrt{1-{(1-|X_q|)^2(1-\ve)}}.
            \end{equation}
            Since $q\in S\setminus \Sigma(S)$, then $|X_q|>0$. According to \cref{prop:div=pm}, $|X_q|\leq 1$.Therefore the map \eqref{eq:bve(ve)} is smooth and strictly positive. Therefore, the map
            \begin{equation}
               [0,1]\ni\ve\mapsto r_\ve(q):=\frac{b_\ve(q)-b_0(q)}{\ve},
            \end{equation}
            is also smooth. Consequently we have 
            \begin{equation}
            \begin{aligned}
                \left\|\eta^\ve-\frac{\alpha}{\sqrt{\ve}}\right\|&=\left\|\frac{\sqrt\ve}{b_\ve(q)}c_1\theta_1+\frac{c_2}{\sqrt\ve}(b_\ve(q)-b_0(q))\theta_2+\frac{1}{\sqrt\ve}e_1(b_\ve(q)-b_0(q))\theta_2\right\|\\
                &=\left\|\frac{\sqrt\ve}{b_\ve(q)}c_1\theta_1+\sqrt\ve c_2r_\ve(q)\theta_2+\sqrt\ve e_1(r_\ve)(q)\theta_2\right\|\to 0,\qquad \text{as}\,\,\, \ve\to 0,
            \end{aligned}
            \end{equation}
            where we used the fact that $\theta_1,\theta_2$ have constant norm $1$ with respect to the metric $g^1|_S$, and the fact that $b_\ve(q)$ is strictly positive and that $r_\ve(q)$, $e_1(r_\ve)(q)$ are smooth.
        
       $(ii)$. Let us compute $d\alpha$. Applying Leibnitz rule and  $d\theta_2=c_2\theta_1\wedge\theta_2$, we get
        \begin{equation}
        \begin{aligned}
            d\alpha&=-d\left(\mathrm{div}(X)\right)\wedge\theta_2-\mathrm{div}(X)d\theta_2=-(e_1(\mathrm{div}(X))+c_2)\theta_1\wedge \theta_2\\
            &=-\left(e_1(\mathrm{div}(X))+\frac{\mathrm{div}(X)-g(\nabla_{e_1}X,e_1)}{|X|}\right)\theta_1\wedge \theta_2,
        \end{aligned}
        \end{equation}
        where, in the third equality, we used the expression of $c_{2}$ given in \eqref{eq:c_2_computation}. Since $\mathrm{div}(X)$ is smooth, $|\mathrm{div}(X)|$ is bounded. Moreover, denoting $a=\mathrm{div}(X)$ and $\nabla a$ the gradient of $a$, we find 
        \begin{equation}\label{eq:e1(a)_est}
        	|e_1(a)|=|g(\nabla a, e_1)|\leq |\nabla a|.
        \end{equation}
    	Hence the term $|e_1(\mathrm{div}(X))|$ is uniformly bounded. Furthermore,  using that the map $v\mapsto \nabla_v X$ is a smooth endomorphism of $TS$ and $S$ is compact, we deduce the existence of a constant $C$ such that 
        \begin{equation}\label{eq:g_nabla_est}
        	|g(\nabla_{v}X,w)|\leq |\nabla_v X||w|\leq C|v||w|,\qquad \forall v,w\in TS.
        \end{equation}
         We deduce that $|g(\nabla_{e_1}X,e_1)|\leq C$. Therefore the first estimate in \eqref{eq:dom_conv} is proved.
        
        Moving to the second estimate, to lighten the notation we denote $a^2=\mathrm{div}(X)^2=1-|X|^2$, recalling \eqref{eq:div(X)^2}. Then, according to \eqref{eq:b_ve}
        \begin{equation}\label{eq:b_ve_a}
            b_\ve=\sqrt{1-a^2(1-\ve)}.
        \end{equation}
  		
        Observe that $a^2\in[0,1]$, hence $\sqrt{1-a^2(1-\ve)}-\sqrt{1-a^2}=|b_\ve-b_0|$ is an increasing function of $a$ for $a,\ve\in[0,1]$. Furthermore, $b_\ve\geq \sqrt\ve$. Thus one gets
        \begin{equation}\label{eq:bieps}
           |b_\ve-b_0|\leq\sqrt\ve\leq b_\ve.
        \end{equation}
    
        Combining \eqref{eq:bieps} with $\|\theta_i\|=1$ we deduce the following estimate 
            \begin{equation}\label{eq:est1}
            \begin{aligned}
                \left\|\eta^\ve-\frac{\alpha}{\sqrt{\ve}}\right\|&=\left\|\frac{\sqrt\ve}{b_\ve}c_1\theta_1+\frac{c_2}{\sqrt\ve}(b_\ve-b_0)\theta_2+\frac{1}{\sqrt\ve}e_1(b_\ve-b_0)\theta_2\right\|\\
                &\leq |c_1|+|c_2|+\frac{1}{\sqrt\ve}|e_1(b_\ve-b_0)|.
            \end{aligned}
            \end{equation}
        	The derivative $e_1(b_\ve)$ can be computed as follows 
        	\begin{equation}
        		e_1(b_\ve)=e_1\left(\sqrt{1-a^2(1-\ve)}\right)=\frac{-a e_1(a)(1-\ve)}{\sqrt{1-a^2(1-\ve)}}=\frac{-a e_1(a)}{b_\ve}(1-\ve).
        	\end{equation}
        	The function $e_1(b_0)$ is obtained substituting $\ve=0$ in the above expression, thus $e_1(b_0)=-ae_1(a)/b_0$. Thus we have
	\begin{equation}\label{eq:manipolo}
	  \frac{1}{\sqrt\ve}|e_1(b_\ve-b_0)|=\frac{1}{\sqrt\ve}\left |a e_1(a)\left(\frac{1-\ve}{b_\ve}-\frac{1}{b_0}\right)\right|
                =\frac{1}{\sqrt\ve}\left |b_{0}e_1(b_{0})\left(\frac{1-\ve}{b_\ve}-\frac{1}{b_0}\right)\right|
	\end{equation}
	Manipulating \eqref{eq:manipolo}, we can estimate
            \begin{equation}\label{eq:est2}
            \begin{aligned}
                \frac{1}{\sqrt\ve}|e_1(b_\ve-b_0)|
                &
                =\frac{1}{\sqrt\ve}\left |b_{0}e_1(b_{0})\left(\frac{b_0-b_\ve}{b_\ve b_0}-\frac{\ve}{b_\ve}\right)\right|\\
                &
                \leq\left|\frac{e_1(b_0)}{b_\ve}\frac{b_0-b_\ve}{\sqrt\ve}\right|+\left|\frac{b_{0}e_1(b_{0})\sqrt{\ve}}{b_\ve}\right|\\
                &\leq \left|\frac{e_1(b_0)}{b_0}\right|+|b_{0}e_1(b_{0})|,
            \end{aligned}
            \end{equation}
        	and in the  last inequality we have used the estimates \eqref{eq:bieps} and $b_\ve\geq b_0$.
			Substituting \eqref{eq:est2} into \eqref{eq:est1}
            \begin{equation}\label{eq:found_L^1_bound}
            \begin{aligned}
                \left\|\eta^\ve-\frac{\alpha}{\sqrt{\ve}}\right\|\leq|c_1|+|c_2|+|b_{0}e_1(b_{0})|+\frac{|e_1(b_0)|}{|X|}.
            \end{aligned}
            \end{equation}
            The function $a=\mathrm{div}(X)$ is smooth, by \eqref{eq:e1(a)_est} the term $|b_{0}e_1(b_{0})|=|a e_1(a)|$ is bounded. The term $e_1(b_0)$ is computed in \eqref{eq:e1(b0)} and, by \eqref{eq:g_nabla_est}, it is bounded. Furthermore, from \eqref{eq:|X|c2}, by smoothness of $\mathrm{div}(X)$ and \eqref{eq:g_nabla_est}, we deduce that 
            \begin{equation}\label{eq:c2}
                |c_2|=\frac{|-g(\nabla_{e_1}X,e_1)+\mathrm{div}(X)|}{|X|}\leq \frac{C}{|X|},
            \end{equation}
           	for some constant $C>0$. A computation analogous to that of equation \eqref{eq:c_2_computation} shows that 
             	\begin{equation}
        		\begin{aligned}
        			c_1&=d\theta_1(e_1,e_2)=-d\left(\iota_{e_2}\theta_1\wedge\theta_2\right)(e_1,e_2)=-\mathrm{div}(e_2)=-\mathrm{div}\left(\frac{JX}{|X|}\right)\\
        			&=-JX(|X|^{-1})-|X|^{-1}\mathrm{div}(JX)=|X|^{-3}g(\nabla_{JX}X,X)-|X|^{-1}\mathrm{div}(X)\\
        			&=|X|^{-1}g(\nabla_{e_2}X,e_1)-|X|^{-1}\mathrm{div}(JX).
        		\end{aligned}
        	\end{equation}
			By smoothness of $\mathrm{div}(JX)$ and \eqref{eq:g_nabla_est}, we deduce that 
            \begin{equation}\label{eq:c1}
                |c_1|=\frac{|g(\nabla_{e_2}X,e_1)-\mathrm{div}(JX)|}{|X|}\leq \frac{C}{|X|}.
            \end{equation}
            Substituting estimates \eqref{eq:c2} and \eqref{eq:c1} in \eqref{eq:found_L^1_bound} we deduce that there exists a constant $C>0$ such that 
            \begin{equation}
                \left\|\eta^\ve-\frac{\alpha}{\sqrt{\ve}}\right\|\leq \frac{C}{|X|}.   
            \end{equation}
            which concludes the proof.
        \end{proof}
        We conclude the section with a technical lemma which is crucial in the proof of the main theorem.

\begin{lemma}\label{lem:technical_eta_ve}
        Let $q\in S$ be a characteristic point of finite order $k\in\mathbb N$ and $\ve \in (0,1]$. Then there exists a family of smooth curves $\{C_\delta\}_{\delta>0}$, enclosing the point $q$ and converging uniformly to it, such that
        \begin{equation}
        \lim_{\delta\to 0}\mathrm{length}\left(C_\delta\right)=0,\qquad  \limsup_{\delta\to 0}\int_{C_{\delta}}|\eta^\ve|<+\infty,
        \end{equation}
        where $\eta^\ve$ is the Levi-Civita connection form described \cref{lem:levi_ve}.
        \end{lemma}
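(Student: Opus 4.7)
The plan is to exploit the local normal form of $X$ near $q$ given by \cref{lem:normal_form_strong_body} and take $C_\delta=\{(x,y): x^2+y^{2k}=\delta^2\}$ in those coordinates. For $\delta>0$ sufficiently small this is a smooth Jordan curve enclosing $q=(0,0)$, and the parametrization $x=\delta\cos\psi$, $|y|^k=\delta|\sin\psi|$ (with the sign of $y$ chosen by branch according to the parity of $k$) readily yields $\mathrm{length}(C_\delta)=O(\delta^{1/k})\to 0$ as $\delta\to 0$.

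For the integral bound, I would decompose $\eta^\ve$ as an exact differential plus a smooth remainder. Since $\ve>0$ is fixed, $g^\ve|_S$ is a smooth Riemannian metric on $S$, so one can pick a smooth $g^\ve$-orthonormal frame $(f_1,f_2)$ of $TS$ in a neighborhood of $q$ (for instance, by Gram--Schmidt on $\partial_x,\partial_y$). On $S\setminus\Sigma(S)$, the unit vector $e_1^\ve$ admits a continuous angular function $\theta$ (defined modulo $2\pi$) such that $e_1^\ve=\cos\theta\, f_1+\sin\theta\, f_2$, and a short computation analogous to the change-of-frame identity used in the proof of \cref{prop:levi-civita}$(iii)$ yields
\[
\eta^\ve=d\theta+\widetilde\eta^\ve,
\]
where $\widetilde\eta^\ve$ is the Levi--Civita 1-form of $g^\ve$ associated with the smooth frame $(f_1,f_2)$ and therefore extends smoothly across $q$. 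Consequently
\[
\int_{C_\delta}|\eta^\ve|\leq V_{C_\delta}(\theta)+\|\widetilde\eta^\ve\|_\infty\cdot\mathrm{length}(C_\delta),
\]
where $V_{C_\delta}(\theta)$ denotes the total variation of $\theta$ along $C_\delta$; the second summand vanishes in the limit.

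The remaining task is to bound $V_{C_\delta}(\theta)$, which is the total angular variation of the direction of $X$ (measured with $g^\ve$ against $(f_1,f_2)$) as $C_\delta$ is traversed once. Inserting the normal form into the parametrization, the leading order of $X$ on $C_\delta$ reads $\nu\delta\cos\psi\,\partial_x+(\mu/k!)\delta\sin\psi\,\partial_y$, whose direction has total angular variation exactly $2\pi$ as $\psi$ runs through $[0,2\pi]$: a full rotation when $k$ is odd, and a half-turn traced forward on one $y$-branch and backward on the other when $k$ is even. The correction terms $a_0,a_1,b_0$ are of size $O(\delta^{1/k})$ on $C_\delta$ and perturb the angular variation by only $o(1)$, so $V_{C_\delta}(\theta)\leq 2\pi+o(1)$.

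The main obstacle is precisely that pointwise estimates on $\eta^\ve$ are too crude. The naive bound $\|\eta^\ve\|\leq C(1/|X|+1/\sqrt\ve)$ coming from \cref{lem:levi_ve}, combined with $|X|\geq c\delta$ and $\mathrm{length}(C_\delta)\leq C'\delta^{1/k}$ on $C_\delta$, produces a blowup of order $\delta^{1/k-1}$ for $k\geq 2$. The exact-part-plus-smooth-remainder decomposition is essential: $d\theta$ blows up pointwise at $q$, yet its total variation along the shrinking loop $C_\delta$ remains $O(1)$ because it is controlled by the angular winding of $X$, which is bounded by $2\pi$ plus a small error thanks to the normal form.
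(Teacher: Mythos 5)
Your overall architecture matches the paper's: both proofs split $\eta^\ve(\dot\gamma)$ into the derivative $\dot\phi$ of the angle of $X$ measured against a smooth $g^\ve$-orthonormal frame plus the smooth connection form of that frame, and both reduce the problem to bounding the total angular variation of $X$ along the shrinking loops via the normal form of \cref{lem:normal_form_strong_body}. You differ in two places: you take the anisotropic curves $\{x^2+y^{2k}=\delta^2\}$ instead of the paper's round circles $\{x^2+y^2=\delta^2\}$, and you try to conclude by a soft "winding of the leading term is $2\pi$, the rest is a small perturbation" argument, whereas the paper bounds $\int_{C_\delta}|X\wedge dX|/|X|^2$ by an explicit term-by-term estimate culminating in the integral $\int_0^{\pi/2}\delta^{k+1}\sin^{k+1}t\,(\delta^2\cos^2t+\delta^{2k}\sin^{2k}t)^{-1}dt\le\pi/2$.

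The gap is in the last step. The total variation $V_{C_\delta}(\theta)=\int_{C_\delta}|X\wedge dX|/|X|^2$ is \emph{not} continuous under $C^0$-small perturbations of the vector field: writing $X=X_0+R$ with $|R|\le\ve|X_0|$ pointwise does not prevent $R$ from oscillating and adding unbounded angular variation (e.g.\ $X_0=(1,0)$, $R=(0,\ve\sin(N\psi))$ gives variation of order $N\ve$). So the assertion that the corrections $a_0,a_1,b_0$, being of relative size $O(\delta^{1/k})$ on $C_\delta$, "perturb the angular variation by only $o(1)$" does not follow from the size estimate you quote; you must also control $dR$ along $C_\delta$. This is where the real work lies, and a crude bound fails: $\sup|X_0|\cdot\int_{C_\delta}\|dR\|\,d\ell$ with $\|dR\|=O(\delta^{1/k})$ and $\mathrm{length}(C_\delta)=O(\delta^{1/k})$ only gives $O(\delta^{1+2/k})$, which is not $o(\delta^2)$ for $k\ge2$. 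The estimate can be rescued, but only by exploiting the anisotropy of the wedge products — e.g.\ that the first component of $X_0$ is $\nu x$ so the relevant terms carry a factor $\int_{C_\delta}|dx|=O(\delta)$ and $\int_{C_\delta}|d(y^k)|=O(\delta)$ rather than the length $O(\delta^{1/k})$ — after which every cross term is $O(\delta^{2+1/k})=o(\delta^2)$. That computation is essentially what the paper carries out (on round circles, via the expansion $|V\wedge\dot V|=|\ell_1x^2+\ell_2xy^k+\ell_3y^{k+1}|$), and it is the heart of the lemma; as written, your proposal asserts it rather than proves it. Two minor points: the statement only requires $\limsup_\delta\int_{C_\delta}|\eta^\ve|<\infty$, so you do not need the sharp constant $2\pi+o(1)$; and your parametrization $|y|=(\delta|\sin\psi|)^{1/k}$ has $\dot y\sim|\sin\psi|^{1/k-1}$ blowing up at $\sin\psi=0$ (the curve is smooth, the parametrization is not), which is harmless for the length since the singularity is integrable, but should be acknowledged when differentiating along the curve.
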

   
        \begin{proof}
According to \cref{lem:normal_form_strong_body}, there exist coordinates $(x,y)$ near $q$ and $\mu,\nu\in\mathbb R\setminus \{0\}$ such that
\begin{equation}
q=(0,0),\qquad X=x(\nu+a_1(x,y))\partial_x+\left(\mu \frac{y^k}{k!}+xa_0(x,y)+b_0(y)\right)\partial_y,
\end{equation}
We set $C_\delta=\{(x,y)\in S\mid x^2+y^2=\delta^2\}$. Let $B_\delta=\{(x,y)\in\mathbb R^2\mid x^2+y^2 <\delta^2\}$. We may assume that the coordinates $(x,y)$ are defined on an open set containing the closure of $B_1$. Let $e_1',e_2'$ be a smooth oriented orthonormal frame for the metric $g^\ve|_S$ (in this proof $\ve\in (0,1]$ is fixed hence we omit it in the notation) defined on the whole $B_1$, and write
\begin{equation}\label{def:X'}
X=|X|(\cos\phi \, e_1'+\sin\phi \,e_2')=:{ V_1'e_1'+V_2'e_2'}
\end{equation}
for some smooth function $\phi:B_\delta\setminus\{q\}\to \mathbb R$. We parametrize $C_\delta$ by $\gamma:[0,2\pi]\to S$ as follows
\begin{equation}\label{eq:def_gamma}
\gamma(t)=(\delta\cos(t),\delta\sin(t)),\qquad \forall\,t\in[0,2\pi].
\end{equation}
For any function $f:U\to \mathbb R$ we denote the derivative along $\gamma$ with $\dot f$, in particular  
\begin{equation}\label{eq:der_along_gamma}
	\dot f = \frac{d}{dt} f(\gamma(t))=(x\partial_y f-y\partial_x f)|_{\gamma(t)}.
\end{equation}
Repeating verbatim the computations \eqref{eq:eta_difference}, which was needed in the proof of \cref{prop:levi-civita}, we find
\begin{equation}\label{eq:eta_ve_dot_gamma1}
\eta^\ve(\dot\gamma)-g(\nabla_{\dot\gamma} e_1',e_2')=\dot\phi.
\end{equation}
Since ${ V_1'}=|X|\cos\phi$, ${ V_2'}=|X|\sin\phi$, we have $\left({ V_1'}\right)^2+\left({ V_1'}\right)^2= |X|^2$ and we can compute 
\begin{equation}\label{eq:dett}
\dot\phi=\frac{d}{dt}\left(\arctan{\left(\frac{{ V_2'}}{{ V_1'}}\right)}\bigg|_{\gamma(t)}\right)=\frac{{ V_1'}{\dot V_2'}-{ V_2'}{\dot V_1'}}{\left({ V_1'}\right)^2+\left({ V_2'}\right)^2}
=\frac{1}{|X|^2}{ V'} \wedge { \dot V'}
\end{equation}
where we identify ${ \dot V'}=({ \dot V_1'},{ \dot V_2'})$ as a vector in $\R^{2}$ and, given $v,w\in\mathbb R^2$, we set $v\wedge w:=\det(v,w)$. Combining \eqref{eq:dett} with \eqref{eq:eta_ve_dot_gamma1} yields
\begin{equation}\label{eq:eta_ve_dot_gamma}
\eta^\ve(\dot\gamma)-g(\nabla_{\dot\gamma} e_1',e_2')=\frac{1}{|X|^2}
{ V'} \wedge { \dot V'}
\end{equation}
We now introduce the following notation
\begin{equation}
V_1=\nu x+xa_1(x,y),\qquad V_2=\mu \frac{y^k}{k!}+xa_0(x,y)+b_0(y),\qquad { V=(V_1,V_2)},
\end{equation}
so that $X=V_1\partial_x+V_2\partial_y$. Since both $e_1',e_2'$ and $\partial_x,\partial_y$ are smooth basis for $TB_1$, there exists a smooth map $A:B_1\to GL_{2}(\R)$, and a constant $C>0$ such that on $B_{1}$
\begin{equation}
{ V'}=AV
, \qquad |\det A|\leq C,\qquad \|\dot A\|\leq C.
\end{equation}
where we stress by \eqref{eq:der_along_gamma} that $\dot A=x\partial_y A-y\partial_x A$, hence is a smooth matrix-valued map on $B_{1}$. 

Since $e_1',e_2'$ are smooth near $q$, we can also assume $|g(\nabla_{\dot\gamma} e_1',e_2')|\leq C$, by adapting the constant $C$. Therefore, substituting the expression of $V'$ in terms of $V$ in \eqref{eq:eta_ve_dot_gamma} we find 
\begin{equation}\label{eq:eta_ve_dot_gam_est}
\begin{aligned}
|\eta^\ve(\dot\gamma)|&\leq C+\frac{1}{|X|^2}\left|AV\wedge \frac{d}{dt}\left(AV\right)\right|\\
&\leq C+\frac{1}{|X|^2}\left| AV\wedge\left( \dot A V+A\dot V \right)\right|\\
&\leq C+C^2+ \frac{C}{|X|^2}\left|V\wedge
\dot V \right|.
\end{aligned}
\end{equation}
By definition of $\gamma$ \eqref{eq:def_gamma}, $\dot x=-y$ and $\dot y=x$. Therefore
\begin{equation}
\begin{aligned}
\dot V_1=-\nu y-ya_1(x,y)+xh_1(x,y),\qquad \dot V_2=-ya_0(x,y)+xh_0(x,y),
\end{aligned}
\end{equation}
where $h_1,h_0$ are smooth functions. Collecting terms and recalling $b_{0}(y)=O(y^{k+1})$, we have
\begin{equation}\label{eq:long_det}
\begin{aligned}
\left|V\wedge
\dot V \right|&=\left(x\begin{pmatrix}
\nu+a_1\\ a_0
\end{pmatrix}+y^k\begin{pmatrix}
0\\ \frac{\mu}{k!}+\frac{b_0}{y^k}
\end{pmatrix}\right)\wedge\left(-y\begin{pmatrix}
\nu+a_1\\ a_0
\end{pmatrix}+x\begin{pmatrix}
h_1\\ h_0
\end{pmatrix}\right)\\
&=\ell_1(x,y)x^2+\ell_2(x,y)xy^k+\ell_3(x,y)y^{k+1},
\end{aligned}
\end{equation}
where $\ell_i$, $i=1,2,3$, are smooth functions, hence they are bounded on $B_{1}$, say $|\ell_i|\leq C$ (from this point $C$ denotes a constant suitably chosen). Combining the estimate described in item $(ii)$ of \cref{thm:finite_order_intro}, namely $|X|^2\geq C(x^2+y^{2k})$, with \eqref{eq:long_det}, one has
\begin{equation}
\frac{1}{|X|^2}\left|V\wedge
\dot V \right|\leq |\ell_1|+\frac{1}{2}|\ell_2|+|\ell_3|\frac{|y|^{k+1}}{x^2+y^{2k}}\leq 2C+C\frac{|y|^{k+1}}{x^2+y^{2k}}.
\end{equation}
Substituting the latter inequality in \eqref{eq:eta_ve_dot_gam_est} we are left with
\begin{equation}
|\eta^\ve(\dot\gamma)|\leq C+C\frac{|y|^{k+1}}{x^2+y^{2k}}.
\end{equation}
Therefore, we have the following estimate
\begin{equation}
\int_{C_{\delta}}|\eta^\ve|=\int_{0}^{2\pi}|\eta^\ve(\dot\gamma)|dt\leq 2\pi C+C\int_{0}^{2\pi}\frac{\delta^{k+1}|\sin(t)|^{k+1}}{\delta^2\cos(t)^2+\delta^{2k}\sin(t)^{2k}}dt.
\end{equation}
To conclude the proof of the lemma is sufficient to show that
\begin{equation}
\lim_{\delta\to 0}\int_{0}^{2\pi}\frac{\delta^{k+1}|\sin(t)|^{k+1}}{\delta^2\cos(t)^2+\delta^{2k}\sin(t)^{2k}}dt=\lim_{\delta\to 0}4\int_{0}^{\pi/2}\frac{\delta^{k+1}\sin(t)^{k+1}}{\delta^2\cos(t)^2+\delta^{2k}\sin(t)^{2k}}dt<+\infty.
\end{equation}
The latter can be estimated as follows
\begin{equation}
\begin{aligned}
\int_{0}^{\pi/2}\frac{\delta^{k+1}\sin(t)^{k+1}}{\delta^2\cos(t)^2+\delta^{2k}\sin(t)^{2k}}dt&\leq \int_{0}^{\pi/2}\frac{\delta^{k+1}\left(\sin(t)^{k+1}+k\cos(t)^2\sin(t)^{k-1}\right)}{\delta^2\cos(t)^2+\delta^{2k}\sin(t)^{2k}}dt\\
&=\int_0^{\pi/2}\frac{d}{dt}\left(\arctan\left(\frac{\delta^{k}\sin(t)^k}{\delta\cos(t)}\right)\right)dt=\frac{\pi}{2}. \qedhere
\end{aligned}
\end{equation}
\end{proof}

\begin{remark}
	 There exists a smooth vector field $X\in\mathfrak X(\mathbb R^2)$ with an isolated singularity at the origin such that  
	\begin{equation}
		\lim_{\delta\to 0}\int_{\partial B_\delta(0)}|\eta|=+\infty.
	\end{equation}
	Indeed, consider the following smooth vector field on $\mathbb R^2$ (extended by continuity at the origin)
	\begin{equation}
		X=\exp\left({-\frac{1}{x^2+y^2}}\right)\left(\cos\left(\frac{y}{(x^2+y^2)^2}\right)\partial_x+\sin\left(\frac{y}{(x^2+y^2)^2}\right)\partial_y\right).
	\end{equation}
	Such vector field has an isolated singularity at the origin. We compute $\eta$ with respect to the Euclidean metric. Let us define a function $\phi$ by imposing $X=|X|\cos\phi\partial_x+|X|\sin\phi\partial_y$. Introducing polar coordinates, $(x,y)=(r\cos\theta,r\sin\theta)$, we note that $|X|=\exp(r^{-2})$ and $\phi=r^{-3}\sin\theta$. We compute
	\begin{equation}
		\begin{aligned}
		\eta(\partial_\theta)&=\frac{1}{|X|^2}g(\nabla_{\partial_\theta}X, JX)=
		g\left(\partial_\theta\cos\phi\partial_x + \partial_\theta\sin\phi\partial_y, \cos\phi\partial_y - \sin\phi\partial_x\right)=\partial_\theta\phi=r^{-3}\cos\theta.
	\end{aligned}
	\end{equation}
	Denoting with $B_\delta(0)$ the Euclidean ball of radius $\delta$ centred at the origin, we find 
	\begin{equation}
		\lim_{\delta\to 0}\int_{\partial B_\delta(0)}|\eta|=\lim_{\delta\to 0}\int_0^{2\pi}|\eta(\partial_\theta)|d\theta=\lim_{\delta\to 0}\int_{0}^{2\pi}\delta^{-3}|\cos\theta|=\lim_{\delta\to 0}4\delta^{-3}=+\infty.
	\end{equation}

\end{remark}

            \section{Convergence of measures. Proof of Theorem~\ref{thm:gauss_bonn_lim_intro0}}\label{sec:proof_of_main}

                Let $(S,g)$ be a smooth, compact and orientable Riemannian surface. Recall that $C^1(S)$ is a Banach space, endowed with the norm 
                \begin{equation}
                    \|\varphi \|_{C^1(S)}=\|\varphi\|_{\infty}+\|d\varphi\|_{\infty}.
                \end{equation}
                Its dual space, $C^1(S)^*$, is also a Banach space endowed with the norm 
                \begin{equation}
                    \|\mu\|_{C^1(S)^*}=\sup_{\varphi\neq 0}\frac{|\mu(\varphi)|}{\|\varphi\|_{C^1(S)}}.
                \end{equation}
                We recall that a sequence $\{\mu_\ve\}_{\ve>0}\subset C^1(S)^*$ strongly converges to $\mu_0\in C^1(S)^*$ if 
                \begin{equation}
                    \lim_{\ve\to 0}\|\mu_\ve-\mu_0\|_{C^1(S)^*}=0.
                \end{equation}
  
      \subsection{Proof of Theorem \ref{thm:gauss_bonn_lim_intro0}}
        
          Our goal is to prove the following limit in $C^1(S)^*$
                  \begin{equation}\label{eq:goal1}
                \lim_{\ve\to 0}\left(K^\ve\sigma^\ve-\frac{\mu_{-1}}{\sqrt{\ve}}\right)=2\pi\sum_{i=1}^\ell\mathrm{ind}(q_i)\delta_{q_i},
         \end{equation}
         where, denoting  $\alpha$ the 1-form  in Lemma~\ref{lem:levi_ve}, one has
          \begin{equation}\label{eq:goal2}
                \mu_{-1}=\lim_{\ve\to 0}\sqrt\ve K^\ve\sigma^\ve=d\alpha.
            \end{equation}
            Recall that characteristic points with finite order of degeneracy are isolated. By compactness of $S$ the characteristic set is finite $\Sigma(S)=\{q_1,\dots,q_\ell\}$.
            According to \cref{lem:technical_eta_ve}, for each $q_i$ there exists a family of circles $\{C_\delta(q_i)\}_{\delta >0}\subset S$ enclosing $q_i$ and converging uniformly to it for $\delta\to 0$ such that 
            \begin{equation}
                \lim_{\delta\to 0}\mathrm{length}\left(C_\delta(q_i)\right)=0,\qquad  \limsup_{\delta\to 0}\int_{C_{\delta}(q_i)}|\eta^\ve|=L_{\ve}<+\infty,\qquad \forall \,\ve\in (0,1],
            \end{equation}
            Let $B_\delta(q_i)$ be the connected component of $S\setminus C_{\delta}(q_i)$ containing $q_i$, so that $C_{\delta}(q_i)=\partial B_{\delta}(q_i)$ and let 
            \begin{equation}\label{eq:tanteballs}
                B_\delta:=B_\delta(q_1)\cup\dots \cup B_\delta(q_n),\qquad S_\delta:=S\setminus B_\delta.
            \end{equation}
            To prove \eqref{eq:goal1}, we split the proof into more steps.
            
            {\bf Step 1}. We prove the following equality: for $\varphi\in C^1(S)$ and every $\ve\in(0,1]$ one has
             \begin{equation}\label{eq:third_delta_division}
                \int_S\varphi\left(K^\ve\sigma^\ve-\frac{d\alpha}{\sqrt{\ve}}\right)=-\int_{S}d\varphi\wedge \left(\eta^\ve-\frac{\alpha}{\sqrt\ve}
                \right)+2\pi\sum_{i=1}^n\varphi(q_i)\,\mathrm{ind}(q_i).
            \end{equation}
\begin{proof}[Proof of Step 1.]        Corollary~\ref{c:garofalo} implies that $|X|^{-1}$ is Lebesgue integrable on $S$. Then, item $(iii)$ of \cref{lem:levi_ve} implies that $\|d\alpha\|$ is Lebesgue integrable as well, therefore, for $\varphi\in C^1(S)$ we have
            \begin{equation}
                \int_S\varphi\left(K^\ve\sigma^\ve-\frac{d\alpha}{\sqrt{\ve}}\right)=\lim_{\delta\to 0}\int_{S_\delta}\varphi \left(K^\ve\sigma^\ve-\frac{d\alpha}{\sqrt{\ve}}\right).
            \end{equation}
            Furthermore, writing $K^{\ve}\sigma^{\ve}=d\eta^{\ve}$ by \eqref{eq:levi_cartan} and using Stokes theorem, we get
            \begin{equation}\label{eq:first_delta_deivision}
                \begin{aligned}
    \int_S\varphi\left(K^\ve\sigma^\ve-\frac{d\alpha}{\sqrt{\ve}}\right)&=\lim_{\delta\to 0}\int_{S_\delta}\varphi \, d\left(\eta^\ve-\frac{\alpha}{\sqrt\ve}
                \right)\\
                &=\lim_{\delta\to 0}\left(-\int_{S_\delta}d\varphi\wedge \left(\eta^\ve-\frac{\alpha}{\sqrt\ve}
                \right)+\int_{\partial B_\delta} \varphi\left(\eta^\ve-\frac{\alpha}{\sqrt\ve}
                \right)\right).
                \end{aligned}
            \end{equation}
            According to \eqref{eq:dom_conv}, there exists a constant $C>0$ independent of $\ve$ such that $\|\eta^\ve-\frac{\alpha}{\sqrt{\ve}}\|\leq C/|X|$. Since $|X|^{-1}$ is integrable, we can pass to the limit in the first term in \eqref{eq:first_delta_deivision} and write
             \begin{equation}\label{eq:second_delta_deivision}
                \begin{aligned}
    \int_S\varphi\left(K^\ve\sigma^\ve-\frac{d\alpha}{\sqrt{\ve}}\right)=-\int_{S}d\varphi\wedge \left(\eta^\ve-\frac{\alpha}{\sqrt\ve}
                \right)+\lim_{\delta\to 0}\int_{\partial B_\delta} \varphi\left(\eta^\ve-\frac{\alpha}{\sqrt\ve}
                \right).
                \end{aligned}
            \end{equation}
            We claim that the following equality holds for the second term in the right hand side (recall that $B_{\delta}$ is a finite union of balls, cf.\ \eqref{eq:tanteballs})
            \begin{equation}\label{eq:any_ve_delta}
                \lim_{\delta\to 0}\int_{\partial B_\delta} \varphi\left(\eta^\ve-\frac{\alpha}{\sqrt\ve}
                \right)=\sum_{i=1}^{\ell}2\pi\varphi(q_i)\,\mathrm{ind}(q_i),\qquad \forall\,\ve\in(0,1].
            \end{equation}
            To prove the claim, we fix a characteristic point $q_i$ and the corresponding ball $B_{\delta}(q_{i})$ around it,  and we fix $\ve>0$.  Exploiting property \eqref{eq:levi_int_ind} for the Levi-Civita connection form $\eta^{\ve}$, we can write
            \begin{equation}
            \begin{aligned}
              L:=\lim_{\delta\to 0}\left|\int_{\partial B_\delta(q_i)} \varphi\left(\eta^\ve-\frac{\alpha}{\sqrt\ve}\right)-2\pi \varphi(q_i)\mathrm{ind}(q_i)\right|
              =\lim_{\delta\to 0}\left|\int_{\partial B_\delta(q_i)} \varphi\left(\eta^\ve-\frac{\alpha}{\sqrt\ve}\right)-\int_{\partial B_\delta(q_i)} \varphi(q_i)\eta^\ve\right|
                  \end{aligned}
            \end{equation}
            so that we can estimate
               \begin{align} 
           L   \leq&\lim_{\delta\to 0}\left|\int_{\partial B_\delta(q_i)} (\varphi-\varphi(q_i))\eta^\ve\right|+\lim_{\delta\to 0}\left|\int_{\partial B_\delta(q_i)}\varphi\frac{\alpha}{\sqrt\ve}\right|\\
                 \leq &\lim_{\delta\to 0}\left(\max_{p\in \partial B_\delta(q_i)}|\varphi(p)-\varphi(q_i)|\right)\limsup_{\delta\to 0}\left(\int_{\partial B_\delta(q_i)}|\eta^\ve|\right)  \label{eq:secondariga}\\
                 &\qquad +\left(\sup_{p\in S\setminus\Sigma(S)}\frac{|\varphi(p)|\|\alpha_p\|}{\sqrt{\ve}}\right)\lim_{\delta\to 0}\left(\mathrm{length}(\partial B_\delta(q_i))\right). \label{eq:terzariga}
            \end{align}
            The term in \eqref{eq:secondariga} is equal to zero, since $\varphi$ is continuous and the lim\,sup is finite by \cref{lem:technical_eta_ve}. Also \eqref{eq:terzariga} vanishes, since the length of $\partial B_\delta$ converges to zero by \cref{lem:technical_eta_ve} and $\|\alpha\|$ is bounded by item $(ii)$ of \cref{lem:levi_ve}. This proves \eqref{eq:any_ve_delta}. Replacing \eqref{eq:any_ve_delta}  into \eqref{eq:second_delta_deivision} we deduce \eqref{eq:third_delta_division} and Step 1 is proved.
           \end{proof}
           
           {\bf Step 2.} We prove the following equality
              \begin{equation}\label{eq:step2}
                \lim_{\ve\to 0}\left\|K^\ve\sigma^\ve-\frac{d\alpha}{\sqrt{\ve}}-2\pi\sum_{i=1}^\ell\mathrm{ind}(q_i)\delta_{q_i}\right\|_{C^1(S)^*}=0.
            \end{equation}

   \begin{proof}[Proof of Step 2.]            Thanks to \eqref{eq:third_delta_division}, for any $\varphi\in C^1(S)$ we have the following inequality 
            \begin{equation}
            \begin{aligned}
            \left|\int_S\varphi\left(K^\ve\sigma^\ve-\frac{d\alpha}{\sqrt{\ve}}\right)-2\pi\sum_{i=1}^\ell \varphi(q_i)\,\mathrm{ind}(q_i)\right|&=\left|\int_{S}d\varphi\wedge \left(\eta^\ve-\frac{\alpha}{\sqrt\ve}
                \right)\right|\\
                &\leq \max_{p\in S}\|d_p\varphi\|\int_S\left\|\eta^\ve-\frac{\alpha}{\sqrt{\ve}}\right\|\sigma^1\leq \|\varphi\|_{C^1(S)}\int_S\left\|\eta^\ve-\frac{\alpha}{\sqrt{\ve}}\right\|\sigma^1,
                \end{aligned}
                \end{equation}
                where in the second line we have used  \eqref{eq:determinant_inequality}.
                By definition of the $C^1(S)^*$ norm we have 
                \begin{equation}\label{eq:norm_sequence_ve}
                \left\|K^\ve\sigma^\ve-\frac{d\alpha}{\sqrt{\ve}}-2\pi\sum_{i=1}^\ell\mathrm{ind}(q_i)\delta_{q_i}\right\|_{C^1(S)^*}\leq \int_S\left\|\eta^\ve-\frac{\alpha}{\sqrt{\ve}}\right\|\sigma^1.
            \end{equation}
            On the one hand, according to \eqref{eq:lim_diff=zero},  the difference $\eta^\ve-\alpha/\sqrt{\ve}$ converges to zero pointwise outside of the characteristic set (which has measure zero).  
            
            On the other hand, $\|\eta^\ve-\alpha/\sqrt{\ve}\|$ by \eqref{eq:dom_conv} is dominated by $|X|^{-1}$, which is integrable. We can therefore apply Lebesgue dominated convergence and taking the limit of $\ve\to 0$ on both sides we prove \eqref{eq:step2}, concluding Step 2. 
         \end{proof}
        
        Step 2 proves \eqref{eq:goal1}. Then \eqref{eq:goal2} is a direct consequence, since
            \begin{equation}
               \left\|\sqrt\ve K^\ve\sigma^\ve-d\alpha\right\|_{C^1(S)^*}\leq \sqrt\ve\left\| K^\ve\sigma^\ve-\frac{d\alpha}{\sqrt\ve}-2\pi\sum_{i=1}^\ell \mathrm{ind}(q_i)\delta_{q_i}\right\|_{C^1(S)^*}+\sqrt\ve\left\|2\pi\sum_{i=1}^\ell\mathrm{ind}(q_i)\delta_{q_i}\right\|_{C^1(S)^*}.
            \end{equation}
            and taking $\ve\to 0$ of both sides we get \eqref{eq:goal2}, concluding the proof of Theorem \ref{thm:gauss_bonn_lim_intro0}.            

\begin{remark}\label{rmk:mu_-1}
In the proof of \cref{thm:gauss_bonn_lim_intro0} the measure $\mu_{-1}$ is explicitly computed and coincides with the exterior differential of the form $\alpha$ defined in \eqref{eq:alpha}. In particular 
\begin{equation}
	\mu_{-1}=-d\left(\frac{\mathrm{div}(X)}{|X|}\omega|_S\right),
\end{equation} 
where $\omega|_S$ is the contact form normalized as in \eqref{eq:omega_norm} and restricted to $S$.
\end{remark}

\section{An operative formula for the invariants}
	
    We provide an operative formula to compute the order of degeneracy of a characteristic point and its associated invariants, through the introduction of an auxiliary affine connection. 
    
    Let us first prove this preliminary lemma. Throughout the section when referring to $\nabla$ as the Levi-Civita connection on $TS$, we mean with respect to $g^1|_S$.
    
     \begin{lemma}\label{prop:formula_lambda0}
            	Let $\nabla$ be the Levi-Civita connection on $TS$. There exists a neighborhood $U\subset S$ of $q$ such that the following linear endomorphism is (fibrewise) diagonalizable
            	\begin{equation}
            		\nabla X:TS|_U\to TS|_U,\qquad Y\mapsto \nabla_YX.
            	\end{equation}
            	In particular $\nabla X$ admits an eigenvector field $v_0\in\mathfrak X(U)$ satisfying $v_0|_q\in\ker D_q X$ and $|v_0|_q=1$.
            \end{lemma}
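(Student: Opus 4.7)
The plan is to first identify the spectrum of $\nabla X|_q$ pointwise at the degenerate characteristic point, then promote the resulting diagonalizability to a neighborhood by continuity, and finally extract a smooth eigenvector field associated with the eigenvalue that vanishes at $q$.

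At $q$ one has $X|_q=0$, so by Remark~\ref{r:tresei} the endomorphism $\nabla X|_q$ coincides with $D_qX$. Remark~\ref{rmk:div_non_zero} gives $\mathrm{tr}(\nabla X|_q)=\mathrm{div}_q(X)=\nu\neq 0$, while $\det(\nabla X|_q)=\det(D_qX)=0$ by degeneracy of $q$. Hence the characteristic polynomial of $\nabla X|_q$ factors as $\lambda(\lambda-\nu)$, exhibiting two distinct real eigenvalues $0$ and $\nu$ and in particular diagonalizability at $q$. I would then introduce the discriminant
\begin{equation}
\Delta(p)=\mathrm{tr}(\nabla X|_p)^2-4\det(\nabla X|_p),
\end{equation}
which is a smooth function on $S$ with $\Delta(q)=\nu^2>0$. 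By continuity, $\Delta>0$ on an open neighborhood $U$ of $q$, on which the two eigenvalues
\begin{equation}
\lambda_\pm(p)=\tfrac12\bigl(\mathrm{tr}(\nabla X|_p)\pm\sqrt{\Delta(p)}\bigr)
\end{equation}
are smooth, real, and everywhere distinct; consequently $\nabla X$ is fibrewise diagonalizable on $U$.

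To construct $v_0$, let $\lambda_0:U\to\mathbb R$ denote the branch of $\lambda_\pm$ satisfying $\lambda_0(q)=0$. The eigenspace $E_0(p):=\ker(\nabla X|_p-\lambda_0(p)\mathrm{Id})$ is a smooth rank-one subbundle of $TS|_U$: in a smooth local frame, $\nabla X-\lambda_0\,\mathrm{Id}$ is a smooth $2\times 2$ matrix of constant rank one, from which a nonvanishing smooth section can be extracted (for instance, via the spectral projector onto $E_0$, which depends smoothly on $p$ since $\lambda_0$ is a simple eigenvalue of $\nabla X|_p$). Any such smooth section $v_0$ automatically takes values in $\ker D_qX$ at $q$; since it is nonzero there, we may rescale $v_0$ by a smooth positive function to arrange $|v_0|_q=1$.

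The only mildly technical step is the smooth dependence of the eigenvector on the base point, but simplicity of $\lambda_0$ on $U$ makes this a standard consequence of finite-dimensional perturbation theory, so no genuine obstacle is expected.
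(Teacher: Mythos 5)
Your proposal is correct and follows essentially the same route as the paper: identify $\nabla X|_q$ with $D_qX$, read off the characteristic polynomial $\lambda(\lambda-\nu)$ from $\mathrm{div}_q(X)\neq 0$ and $\det(D_qX)=0$, propagate the simplicity of the eigenvalues to a neighborhood by continuity, and pick the smooth eigenvector field for the branch vanishing at $q$. Your version merely makes explicit (via the discriminant and the spectral projector) the smoothness claims that the paper states more tersely.
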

   \begin{proof}

Since $X_q=0$, the equality $\nabla X|_q=D_qX$ holds (see \cref{r:tresei}). According to \cref{r:tresei}, $\mathrm{div}_q (X)=\mathrm{trace}\,(D_qX)\neq 0$. Moreover $\det (D_qX)=0$. We deduce that the characteristic polynomial of $\nabla X|_q$ has the following expression
\begin{equation}
	P(\lambda)|_q=\lambda(\lambda-\mathrm{div}_q(X)).
\end{equation}
Therefore $\nabla X|_q$ has two distinct eigenvalues at $q$. By continuity $\nabla X$ has two distinct eigenvalues in a neighbourhood $U\subset S$ of $q$. In particular these are smooth functions $\lambda_0,\lambda_1:U\to \mathbb R$ satisfying 
\begin{equation}\label{eq:lambda_0(q)=0}
	\lambda_0(q)=0,\qquad\lambda_1(q)=\mathrm{div}_q(X).
\end{equation}
Then there exists an oriented couple  $(v_0,v_1)$ of normalized eigenvectors
\begin{equation}\label{eq:eigen_vec/val}
	\nabla_{v_i}X=\lambda_i v_i,\qquad |v_i|_q=1,\qquad i=0,1,
\end{equation}
Since $\lambda_{0}(q)=0$, the vector field $v_{0}$ satisfies the requirements.
   \end{proof}         
     In the following proposition, whose proof is postponed to Section~\ref{app:formula_lambda}, we provide a formula for the invariants  $\Lambda^{(k)}(q)$ defined in \eqref{eq:k-th_eigenvalue_intro}.
            \begin{prop}\label{prop:formula_lambda}
            	Let $q\in S$ be a characteristic point with $\mathrm{ord}(q)=k\geq 2$. Let $\nabla$ be the Levi-Civita connection on $TS$. Let $v_0\in\mathfrak X(U)$ be the vector field of Lemma~\ref{prop:formula_lambda0}.
            	Then
            	\begin{equation}\label{eq:lambdagrande1}
            		\Lambda^{(k)}(q)=\left.v_0^{(k-1)}\left(\frac{\det(\nabla X)}{\mathrm{trace}(\nabla X)}\right)\right|_q.
            	\end{equation}
            	Furthermore, $k$ is the smallest natural number such that the right-hand side of \eqref{eq:lambdagrande1} is non zero.
            \end{prop}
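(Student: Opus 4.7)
The plan is to reduce the identity to a direct computation in the local normal form coordinates from \cref{lem:normal_form_strong_body}, leveraging the freedom to choose any convenient affine connection.

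First I would fix coordinates $(x,y)$ near $q$ with $q=(0,0)$ in which
$X = x(\nu+a_1)\partial_x + \bigl(\tfrac{\mu}{k!}y^k + x a_0 + b_0(y)\bigr)\partial_y$,
where $\nu=\mathrm{div}_q(X)$, $\mu=\Lambda^{(k)}(q)$, the coefficient functions satisfy \eqref{eq:needed_orders_app_body}, and $y\mapsto(0,y)$ is the arc-length parametrized horizontal kernel extension; in particular $|\partial_y|_{(0,y)}=1$.

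The computation is cleanest with the flat affine connection $\tilde\nabla$ associated to these coordinates, defined by $\tilde\nabla_{\partial_i}\partial_j=0$. For this connection, $\tilde\nabla X$ is represented by the Jacobian matrix of $(X^1,X^2)$, and its $(1,2)$-entry $\partial_y X^1 = x\,\partial_y a_1$ vanishes identically along the characteristic curve $\{x=0\}$. Thus along $\{x=0\}$ the matrix $\tilde\nabla X$ is lower triangular with distinct diagonal entries $\nu + a_1(0,y)\to\nu$ and $\tfrac{\mu}{(k-1)!}y^{k-1} + b_0'(y)\to 0$; the eigenvector for the small eigenvalue is $\partial_y$, already of unit norm. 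Hence $\tilde v_0=\partial_y$ along $\{x=0\}$, and the integral curve of $\tilde v_0$ through $q$ is simply $s\mapsto(0,s)$. Along this curve,
\[
\frac{\det(\tilde\nabla X)}{\mathrm{trace}(\tilde\nabla X)}\bigg|_{(0,y)} = \frac{(\nu+O(y))\bigl(\tfrac{\mu}{(k-1)!}y^{k-1}+O(y^k)\bigr)}{\nu+O(y)} = \tfrac{\mu}{(k-1)!}\,y^{k-1}+O(y^k),
\]
so $\tilde v_0^{(k-1)}(\det/\mathrm{trace})|_q=\mu=\Lambda^{(k)}(q)$, and all lower-order iterated derivatives at $q$ vanish, proving the minimality of $k$.

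The main remaining step, and the principal technical obstacle, is to transfer this identity from $\tilde\nabla$ to the Levi-Civita connection $\nabla$ of $g^1|_S$. The difference $\nabla X - \tilde\nabla X$ is of the form $\Gamma\cdot X$ and so inherits the vanishing of $X$ at $q$; along $\{x=0\}$ one has $X=O(y^k)\partial_y$, so the $(1,2)$-entry of $\nabla X$ is $O(y^k)$ rather than identically zero. A perturbation of the eigenvector calculation then yields $v_0^x(0,y)=O(y^k)$ along $\{x=0\}$, and consequently the integral curve of $v_0$ through $q$ satisfies $x(s)=O(s^{k+1})$. This excess vanishing in the $\partial_x$-direction is precisely what is needed so that the non-pure-$y$ terms of $\det(\nabla X)/\mathrm{trace}(\nabla X)$ contribute $O(s^{k+1})$ along the integral curve, leaving the $(k-1)$-th iterated derivative at $q$ unchanged. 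This perturbative bookkeeping, while routine, is the main computational burden; it simultaneously establishes the formula for the Levi-Civita connection and yields the connection-independence noted after the proposition.
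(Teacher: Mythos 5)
Your proposal is correct in outline and shares the paper's core strategy --- compute with the flat connection of the normal-form coordinates of \cref{lem:normal_form_strong_body}, where everything is explicit, and then transfer to the Levi-Civita connection --- but your transfer step is genuinely different. The paper compares two \emph{arbitrary} affine connections abstractly: writing the difference tensor $A=\nabla'-\nabla$, it derives $\lambda_0'=\lambda_0+\beta_0(X)$ and $v_0'=v_0+\beta_1(X)v_1$ for smooth $1$-forms $\beta_i$, and then invokes the order-of-contact machinery of \cref{a:contact} (\cref{lem:order_lem1}, \cref{lem:order=order_of_eq}, \cref{lem:order_orbits}) to show that the order and the leading $(k-1)$-st derivative of $\lambda_0$ along the integral curve of $v_0$ do not depend on the connection. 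You instead do the comparison concretely in coordinates: $\nabla X-\tilde\nabla X=\Gamma\cdot X$ is $O(y^k)$ entrywise along $\{x=0\}$, hence the small-eigenvalue eigenvector satisfies $v_0^x(0,y)=O(y^k)$, its integral curve satisfies $x(s)=O(s^{k+1})$ by a Gronwall estimate, and the off-axis part of $\det(\nabla X)/\mathrm{trace}(\nabla X)$ contributes only to order $\geq k+1$ along that curve. Both transfers hinge on the same key observation (the difference of two connections applied to $X$ vanishes to the order of $X$ itself), but the paper's version is coordinate-free and yields the connection-independence remark for free, while yours is shorter to state and bypasses the appendix lemmas at the price of the Taylor/Gronwall bookkeeping you defer as ``routine'' --- which it is, but it is where the content of the transfer actually lives. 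Two points to tighten: (a) you need \emph{all} entries of $\nabla X-\tilde\nabla X$ to be $O(y^k)$ along $\{x=0\}$, not only the $(1,2)$-entry, so that $\det$ and $\mathrm{trace}$ restricted to the axis change only by $O(y^k)$; your phrase ``inherits the vanishing of $X$'' covers this, but it is the statement you actually use. (b) The final derivative carries a factor $(\dot y(0))^{k-1}=(\pm1)^{k-1}$ coming from the two possible orientations of $v_0$ at $q$; for $k$ odd this is harmless, and for $k$ even it reproduces exactly the sign ambiguity with which $\Lambda^{(k)}(q)$ is defined --- the same caveat the paper dispatches in its Step~2 with the words ``since $k$ is odd.''
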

Given  $\nabla$ the Levi-Civita connection on $TS$ we define the function
\begin{equation}\label{eq:hat_K_body}
	\widehat K:S\to\mathbb R,\qquad \widehat K(p)=-1+\frac{\det (\nabla X|_{p})}{\mathrm{trace}^2(\nabla X|_{p})}.
\end{equation}
\begin{prop}
	Let $q\in S$ be a characteristic point with $\mathrm{ord}(q)=k$ odd, and let $v_0$ be the local vector field introduced in  Lemma~\ref{prop:formula_lambda0}  and $\widehat K$ be the function \eqref{eq:hat_K_body}. Then 
	\begin{equation}
		\mathrm{div}_q(X)\Lambda^{(k)}(q)=v_0^{(k-1)}\left(\widehat{K}+1\right)\Big|_{q},
	\end{equation}
	 If $S$ is compact and all characteristic points are of finite order of degeneracy, denoting $k_i=\mathrm{ord}(q_{i})$ for $i=1,\ldots,\ell$ one has
	\begin{equation}\label{eq:secondpart}
		\chi(S)=\sum_{k_i\,\,\text{odd}}\mathrm{sign}\left(v_0^{(k_i-1)}\left(\widehat{K}+1\right)\Big|_{q_{i}}\right).
	\end{equation} 
\end{prop}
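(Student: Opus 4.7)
The plan is to derive the pointwise identity from the operative formula \eqref{eq:lambdagrande1} by a Leibniz expansion, and then assemble the Euler characteristic formula \eqref{eq:secondpart} by combining the pointwise identity with \cref{c:chidiv}.

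First I would factor $\widehat K + 1 = f \cdot h$ with $f = \det(\nabla X)/\mathrm{trace}(\nabla X)$ and $h = 1/\mathrm{trace}(\nabla X)$. Both factors are smooth near $q$ because $\mathrm{trace}(\nabla X|_q) = \mathrm{div}_q(X) \neq 0$ by \cref{rmk:div_non_zero}. Applying the iterated Leibniz rule for the first-order operator $v_0$,
\begin{equation}
v_0^{(k-1)}(fh)\big|_q = \sum_{j=0}^{k-1} \binom{k-1}{j}\, v_0^{(j)}(f)\big|_q \cdot v_0^{(k-1-j)}(h)\big|_q,
\end{equation}
and invoking the minimality clause of \cref{prop:formula_lambda}, which says that $v_0^{(j)}(f)|_q = 0$ for every $0 \leq j \leq k-2$ while $v_0^{(k-1)}(f)|_q = \Lambda^{(k)}(q)$, all but the $j = k-1$ term drop out, leaving
\begin{equation}
v_0^{(k-1)}(\widehat K + 1)\big|_q = \Lambda^{(k)}(q)\, h(q) = \frac{\Lambda^{(k)}(q)}{\mathrm{div}_q(X)}.
\end{equation}

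The final step for the pointwise identity exploits \cref{prop:div=pm}: at a characteristic point $|X|_q = 0$, so $\mathrm{div}_q(X)^2 = 1$ and consequently $1/\mathrm{div}_q(X) = \mathrm{div}_q(X)$. Substituting into the line above gives $v_0^{(k-1)}(\widehat K + 1)|_q = \mathrm{div}_q(X)\, \Lambda^{(k)}(q)$, which is the claimed identity.

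For the second part, I would simply plug the pointwise identity into \cref{c:chidiv}: since each summand $\mathrm{sign}(\mathrm{div}_{q_i}(X)\,\Lambda^{(k_i)}(q_i))$ appearing in the corollary equals $\mathrm{sign}(v_0^{(k_i-1)}(\widehat K + 1)|_{q_i})$, the identity \eqref{eq:secondpart} follows at once. The only delicate point in the whole argument is the vanishing step in the Leibniz expansion, which rests on the minimality of $k$ in \cref{prop:formula_lambda}; without that, the sum would retain many terms involving derivatives of $1/\mathrm{trace}(\nabla X)$ and the clean identification with $\Lambda^{(k)}(q)$ would be lost. The observation that $\mathrm{div}_q(X)^2 = 1$ at characteristic points, which provides the final clean-up from $\Lambda^{(k)}(q)/\mathrm{div}_q(X)$ to $\mathrm{div}_q(X)\,\Lambda^{(k)}(q)$, is the structural ingredient special to sub-Riemannian normalisation that I expect to be the key conceptual step.
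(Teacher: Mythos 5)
Your proposal is correct and follows essentially the same route as the paper's proof: both factor $\widehat K+1$ as $\frac{\det\nabla X}{\mathrm{trace}\,\nabla X}\cdot\frac{1}{\mathrm{trace}\,\nabla X}$, use the minimality clause of Proposition~\ref{prop:formula_lambda} to collapse the Leibniz expansion to the single top-order term, and invoke $\mathrm{div}_q(X)=\pm1$ from Proposition~\ref{prop:div=pm} to turn $1/\mathrm{div}_q(X)$ into $\mathrm{div}_q(X)$, before concluding via Poincar\'e--Hopf (Corollary~\ref{c:chidiv}). You merely write out explicitly the Leibniz sum that the paper leaves implicit.
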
 
\begin{proof}
Let us start by writing
\begin{equation}
	\begin{aligned}
		v_0^{(k-1)}\left(\widehat{K}+1\right)=v_0^{(k-1)}\left(\frac{\det\nabla X}{(\mathrm{trace}\nabla X)^2}\right)=v_0^{(k-1)}\left(\frac{\det\nabla X}{\mathrm{trace}\nabla X}\cdot \frac{1}{\mathrm{trace}\nabla X}\right)
			\end{aligned}
\end{equation}
Since $k$ is the smallest natural number such that the right-hand side of \eqref{eq:lambdagrande1} is non zero one has
\begin{equation}
	\begin{aligned}
		v_0^{(k-1)}\left(\widehat{K}+1\right)\Big|_{q}=v_0^{(k-1)}\left(\frac{\det\nabla X}{\mathrm{trace}\nabla X}\right)\bigg|_{q} \frac{1}{\mathrm{trace}(\nabla X|_{q})}=v_0^{(k-1)}\left(\frac{\det\nabla X}{\mathrm{trace}\nabla X}\right)\bigg|_{q} \mathrm{trace}(\nabla X|_{q})
			\end{aligned}
\end{equation}
where, in the last equality, we used in a crucial way that $\mathrm{div}_q(X)=\mathrm{trace}(\nabla X|_q)=\pm 1$ at characteristic points thanks to
 \cref{prop:div=pm} and \cref{r:tresei}.

Equality \eqref{eq:secondpart} follows from Poincaré-Hopf theorem combined with $(iii)$ of \cref{thm:finite_order_intro}.
\end{proof}

\subsection{Proof of \cref{prop:formula_lambda}}\label{app:formula_lambda}

Recall that there exists a couple  $(v_0,v_1)$ of normalized eigenvectors
\begin{equation}\label{eq:eigen_vec/val}
	\nabla_{v_i}X=\lambda_i v_i,\qquad |v_i|_q=1,\qquad i=0,1,
\end{equation}
whose associated eigenvalues satisfy, at characteristic points, 
 \begin{equation}\label{eq:lambda_0(q)=0}
	\lambda_0(q)=0,\qquad\lambda_1(q)=\mathrm{div}_q(X).
\end{equation}
 Let us fix an arbitrary affine connection $\nabla'$ (possibly with torsion). Let $ v_i'$ and $ \lambda_i'$, for $i=0,1$, be the eigenvectors and eigenvalues satisfying \eqref{eq:eigen_vec/val} associated with $\nabla'$ (notice that the arguments of \cref{r:tresei}, and consequently those of \cref{prop:formula_lambda0}, apply to arbitrary affine connections). Let $\mathcal C$ (resp.~$\mathcal C'$) be the integral curve of $v_0$ (resp.~$v'_{0}$) passing through $q$. We divide the proof into more steps.

\textbf{Step 1.} We have the identity:  $\mathrm{order}(\lambda_{0}|_{\mathcal C},q)=\mathrm{order}({\lambda}_{0}'|_{{\mathcal C}'},q)$.

\noindent
 We start by proving the existence of two smooth 1-forms $\beta_0,\beta_1$ defined near $q$, such that 
\begin{equation}\label{eq:beta_diff_forms}
	\lambda_0'=\lambda_0+\beta_0(X),\qquad v_0'=v_0+\beta_1(X)v_1.
\end{equation}
Let $A$ be the following tensor defined by the difference of the connections
\begin{equation}
	A(Y,Z)=\nabla_{Y}'Z-\nabla_{Y}Z,\qquad \forall\,Y,Z\in\mathfrak X(S).
\end{equation}
Let us decompose $A$ with respect to the basis $v_0,v_1$, defining the smooth bilinear forms $A^{0},A^{1}$ 
\begin{equation}
	A=A^0v_0+A^1v_1.
\end{equation}
By construction $v_0'|_q$ and $v_0|_q$ have norm $1$ and both belong to $\ker D_qX$, which is 1-dimensional. Hence $v_0'|_q=\pm v_0|_q$. Up to multiplying $v_0'$ by a smooth non vanishing function, we may assume 
\begin{equation}
	v_0'=v_0+fv_1.
\end{equation}
for some smooth function $f$ vanishing at $q$.
We  compute  $\nabla'_{v'_{0}}X$ in two ways: on the one hand
\begin{equation}
	\begin{aligned}
	\nabla'_{v_0'}X&=\nabla_{v_0'}X+A(v_0',X)=\nabla_{v_0+fv_1}X+A(v_0',X)\\
	&=\lambda_0 v_0+f\lambda_1v_1+A(v_0',X)=(\lambda_0+ A^0(v_0',X))v_0+(f\lambda_1+A^1(v_0',X))v_1.
	\end{aligned}
\end{equation}
On the other hand  by definition $\nabla'_{v'_{0}}X=\lambda'_{0}v'_{0}=\lambda_0'(v_0+fv_1)$, hence one can solve for $f$ finding
\begin{equation}
\lambda_0'=\lambda_0+A^0(v_0',X),\qquad  v_0'=v_0+\frac{A^1(v_0',X)}{\lambda_0'-\lambda_1}v_1.
\end{equation} 
Setting $\beta_0(\cdot)=A^0(v_0',\cdot)$ and $\beta_1(\cdot)=A^1(v_0',\cdot)/(\lambda_0'-\lambda_1)$, equation \eqref{eq:beta_diff_forms} is proved.

Let ${\mathcal C}'$ be the integral curve of  $v_0'$ through $q$. Let $\mathrm{order}(\mathcal C\cap{\mathcal C}',q)$ denote the order of contact  between $\mathcal C$ and $\mathcal C'$at $q$  (see \cref{def:order_of_contact}). Then \cref{lem:order_orbits} together with the expression of $v_0'$ in \eqref{eq:beta_diff_forms} ensure 
\begin{equation}\label{eq:O1}
	\mathrm{order}(\mathcal C\cap{\mathcal C}',q)\geq\mathrm{order}(\beta_1(X)|_{\mathcal C},q).
\end{equation}
Moreover, due to  $\nabla_{v_0}X=\lambda_{0}v_{0}$, the following holds
\begin{equation}\label{eq:nabla_vanishing_order}
	\nabla^{(i)}_{v_0}X|_{q}=0,\qquad 0\leq i\leq \mathrm{order}(\lambda_0|_{\mathcal C},q),
\end{equation}
which, in turn, thanks to $\nabla_{v_{0}}\beta_{i}(X)=(\nabla_{v_{0}}\beta_{i})X+\beta_{i}(\nabla_{v_{0}}X)$ implies that 
\begin{equation}\label{eq:O2}
	\mathrm{order}(\beta_i(X)|_{\mathcal C},q)>\mathrm{order}(\lambda_0|_{\mathcal C},q),\qquad i=0,1.
\end{equation}
Combining \eqref{eq:beta_diff_forms} and \eqref{eq:O2} one gets 
\begin{equation}\label{eq:order=order'}
\mathrm{order}(\lambda_{0}|_{\mathcal C},q)=\mathrm{order}(\lambda_0'|_{\mathcal C},q).
\end{equation}
Consequently, we deduce that
\begin{equation}\label{eq:some_est_on_order}
	\mathrm{order}(\mathcal C\cap {\mathcal C}',q)\geq\mathrm{order}(\beta_1(X)|_{\mathcal C},q)>\mathrm{order}(\lambda_{0}|_{\mathcal C},q)=\mathrm{order}(\lambda_0'|_{\mathcal C},q).
\end{equation}
We conclude by \cref{lem:order_lem1} the desired identity
\begin{equation}\label{eq:all_orders_are_same}
	\mathrm{order}(\lambda_{0}|_{\mathcal C},q)=\mathrm{order}({\lambda}'_{0}|_{\mathcal C},q)=\mathrm{order}({\lambda}_{0}'|_{{\mathcal C}'},q).
\end{equation}

\textbf{Step 2.} The identity \eqref{eq:lambdagrande1} hold for a particular connection $\nabla'$ with $k=\mathrm{ord}(q)=\mathrm{order}(\lambda'_0|_{\mathcal C'}, q)+1$.

\noindent
According to \cref{lem:normal_form_strong_body} there exist coordinates $(x,y)$ near $q$ such that 
\begin{equation}
	q=(0,0),\qquad X=x(\nu+a_1(x,y))\partial_x+\left(\frac{\mu}{k!} y^k+xa_0(x,y)+b_0(y)\right)\partial_y,
\end{equation}
Moreover, in these coordinates, the curve $y\mapsto (0,y)$ is an horizontal kernel extension and $\Lambda^{(k)}(q)=\mu$. Fix now a Riemannian metric on $S$ which, near $q$, has the expression $dx^2+dy^2$. Let $\nabla'$ be the associated Levi-Civita connection. The following equality holds 
\begin{equation}\label{eq:stanc}
	\left.\left(\nabla'_{\partial_y}X\right)\right|_{(0,y)}=\left(\frac{\mu}{(k-1)!}y^{k-1}+\partial_{y}b_0(y)\right)\partial_y.
\end{equation}
Since the right hand side of \eqref{eq:stanc} is propotional to $\partial_{y}$, we deduce that 
\begin{equation}
	v_{0}'|_{(0,y)}=f(y)\partial_y,\qquad \lambda'_0|_{(0,y)}=\mu\frac{y^{k-1}}{(k-1)!}+\partial_{y}b_0(y),
\end{equation}
where $f$ is a smooth function satisfying $f(0)=\pm 1$. It follows that $y\mapsto (0,y)$ is a re-parametrization of the integral curve $\mathcal C'$ of $v_0'$ passing through the origin. Therefore, by property \eqref{eq:needed_orders_app_body} of $b_0$ $$\mathrm{order}(\lambda'_0|_{\mathcal C'}, q)=k-1=\mathrm{ord}(q)-1.$$ Applying \eqref{eq:reparametrization} we obtain
\begin{equation}
	\left.v_0'^{(k-1)}\left(\frac{\det\nabla' X}{\mathrm{trace}\nabla' X}\right)\right|_q=(\pm1)^{k-1}	\left.\frac{d^{k-1}}{dy^{k-1}}\left(\left.\frac{\det\nabla' X}{\mathrm{trace}\nabla' X}\right|_{(0,y)}\right)\right|_{y=0}=\left.\frac{d^{k-1}}{dy^{k-1}}\left(\left.\frac{\det\nabla' X}{\mathrm{trace}\nabla' X}\right|_{(0,y)}\right)\right|_{y=0},
\end{equation}
since $k$ is odd. By property \eqref{eq:needed_orders_app_body} of $b_0$, $k$ is the smallest natural number such that $\frac{d^{k-1}}{dy^{k-1}}\lambda_0'(0,y)|_{y=0}$ is non zero, hence
\begin{equation}
	\begin{aligned}
		\left.\frac{d^{k-1}}{dy^{k-1}}\left(\left.\frac{\det\nabla' X}{\mathrm{trace}\nabla' X}\right|_{(0,y)}\right)\right|_{y=0}&=
		\left.\frac{d^{k-1}}{dy^{k-1}}\left(\left.\lambda'_0\cdot\frac{\lambda'_1}{\lambda'_1+\lambda'_0}\right|_{(0,y)}\right)\right|_{y=0}\\
		&=\left.\frac{d^{k-1}}{dy^{k-1}}\lambda'_0(0,y)\right|_{y=0}\frac{\lambda'_1(0,0)}{\lambda'_1(0,0)+\lambda'_0(0,0)}\\
		&=\left.\frac{d^{k-1}}{dy^{k-1}}\lambda'_0(0,y)\right|_{y=0}=\mu=\Lambda^{(k)}(q).
	\end{aligned}
\end{equation}

\textbf{Claim 3.} Equation \eqref{eq:lambdagrande1} holds for the Levi-Civita connection.

\noindent
Combining Claim 2 and Claim 3 we see that, for any connection $k-1=\mathrm{ord}(q)-1=\mathrm{order}(\lambda_0|_{\mathcal C},q)$. Thus, $k$ is the smallest natural number such that $v_0^{(k-1)}(\lambda_0)|_q$ is non zero and one has
\begin{equation}
	v_0^{(k-1)}\left(\frac{\det(\nabla X)}{\mathrm{trace}(\nabla X)}\right)\bigg|_q
	=v_0^{(k-1)}\left(\lambda_0\cdot\frac{\lambda_1}{\lambda_1+\lambda_0}\right)\bigg|_q
	=v_0^{(k-1)}\left(\lambda_0\right)\bigg|_q\frac{\lambda_1(q)}{\lambda_1(q)+\lambda_0(q)}=v_0^{(k-1)}\left(\lambda_0\right)\bigg|_q,
\end{equation}
where we have used $\lambda_1(q)=\mathrm{div_q(X)}\neq 0$, and $\lambda_0(q)=0$. On the other hand, by \eqref{eq:beta_diff_forms} and \eqref{eq:O2}
\begin{equation}
	v_0^{(k-1)}\left(\lambda_0\right)\bigg|_q=v_0^{(k-1)}\left(\lambda_0'-\beta_0(X)\right)\bigg|_q=v_0^{(k-1)}\left(\lambda_0'\right)\bigg|_q.
\end{equation}
Furthermore, \eqref{eq:some_est_on_order} and the fact that $v_0|_q=\pm v_0'|_q$ allow applying \cref{lem:order_lem1} and \cref{r:lemmachiave} to the curves $\mathcal C$, $\mathcal C'$ and the function $\lambda_0'$, finding 
\begin{equation}
	v_0^{(k-1)}\left(\lambda_0'\right)\bigg|_q=(\pm1)^{k-1}v_0'^{(k-1)}\left(\lambda_0'\right)\bigg|_q=v_0'^{(k-1)}\left(\lambda_0'\right)\bigg|_q=v_0'^{(k-1)}\left(\frac{\det(\nabla' X)}{\mathrm{trace}(\nabla' X)}\right)\bigg|_q.
\end{equation}
Choosing $\nabla'$ as in Claim 2 and combining the latter three equations, we prove the claim.

We conclude the proof: by Claim 3 \eqref{eq:lambdagrande1} holds. Combining Claim 1 and Claim 2 we see that $k$ is the smallest natural number such that the right hand side of \eqref{eq:lambdagrande1} is non zero.
\subsection{An observation in the analytic case} As an application of \cref{prop:v_0_intro}, we prove that in the analytic setting an isolated characteristic point necessarily has finite order of degeneracy. 
	\begin{prop}\label{prop:curve_of_char}
		Let $S$ be a analytic surface embedded in a analytic 3D contact sub-Riemannian manifold. Let $q\in S$ be a characteristic point with $\mathrm{ord}(q)=+\infty$, then $q$ is contained in an embedded analytic curve of characteristic points. In particular $q$ is not isolated.
	\end{prop}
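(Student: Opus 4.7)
The plan is to leverage \cref{prop:formula_lambda} in the analytic category. In the analytic setting, the characteristic vector field $X$ and the Levi-Civita connection $\nabla$ on $(S, g^1|_S)$ are analytic near $q$. The endomorphism $\nabla X$ has two distinct eigenvalues at $q$, namely $0$ and $\mathrm{div}_q(X)\neq 0$, so by the analytic implicit function theorem both the eigenvalues $\lambda_0, \lambda_1$ and a normalized frame $v_0, v_1$ of eigenvectors as in \cref{prop:formula_lambda0} are analytic on some neighborhood $U$ of $q$. Let $\mathcal C \subset U$ denote the integral curve of $v_0$ through $q$; by analyticity of $v_0$ this is an embedded analytic curve.

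Next I would translate the infinite-order condition into a vanishing statement for $\lambda_0$ along $\mathcal C$. By \cref{prop:formula_lambda}, $\mathrm{ord}(q)$ is the smallest $k$ for which $v_0^{(k-1)}\bigl(\det(\nabla X)/\mathrm{trace}(\nabla X)\bigr)|_q$ does not vanish. Near $q$ the ratio factors as $\det(\nabla X)/\mathrm{trace}(\nabla X) = \lambda_0\lambda_1/(\lambda_0+\lambda_1) = \lambda_0\cdot h$, with $h$ analytic and $h(q)=1$. A short induction based on the Leibniz rule shows that for every $j\geq 0$ the vanishing of $v_0^{(j)}(\lambda_0 h)|_q$ is equivalent to the vanishing of $v_0^{(j)}(\lambda_0)|_q$. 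Hence $\mathrm{ord}(q)=+\infty$ forces $v_0^{(j)}(\lambda_0)|_q = 0$ for all $j\geq 0$. Parametrizing $\mathcal C$ so that $\dot{\mathcal C}=v_0$, the analytic function $t \mapsto \lambda_0(\mathcal C(t))$ has a zero of infinite order at $t=0$, so by analyticity $\lambda_0$ vanishes identically along $\mathcal C$.

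The last step is to promote $\lambda_0\equiv 0$ on $\mathcal C$ to $X\equiv 0$ on $\mathcal C$. By the eigenvalue equation one has $\nabla_{\dot{\mathcal C}}X = \nabla_{v_0}X = \lambda_0 v_0 \equiv 0$ along $\mathcal C$, which is a homogeneous first-order linear ODE for the section $t\mapsto X|_{\mathcal C(t)}$ of $\mathcal C^*TS$. Its unique solution with initial datum $X|_q=0$ is identically zero, hence $\mathcal C\subset \Sigma(S)$, proving the claim. I expect the main point to verify carefully is the analyticity of $v_0$, which requires the simplicity of the spectrum of $\nabla X|_q$ (guaranteed by $\mathrm{div}_q(X)\neq 0$) in order to keep the associated spectral projector analytic; the remaining ingredients are routine.
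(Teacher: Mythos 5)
Your argument is correct and follows essentially the same route as the paper: analyticity of the simple eigenpair $(\lambda_0,v_0)$ from \cref{prop:formula_lambda0}, the factorization $\det(\nabla X)/\mathrm{trace}(\nabla X)=\lambda_0\lambda_1/(\lambda_0+\lambda_1)$ combined with \cref{prop:formula_lambda} to conclude that $\lambda_0$ vanishes to infinite order (hence identically, by analyticity) along the integral curve of $v_0$, and finally the parallel-transport ODE argument to upgrade $\lambda_0\equiv 0$ to $X\equiv 0$ on that curve. The only cosmetic difference is that you phrase the reduction from $v_0^{(j)}(\lambda_0 h)|_q$ to $v_0^{(j)}(\lambda_0)|_q$ as an explicit Leibniz induction, which the paper carries out by the same computation in the preceding section.
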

	\begin{proof}
		 Since data are analytic, the Levi-Civita connection $\nabla$ of $g^1|_S$ is also analytic. Consequently the eigenvector field $v_0$ of \cref{prop:formula_lambda0} and the corresponding simple eigenvalue $\lambda_0$ are  analytic as well. Since $\mathrm{ord}(q)=+\infty$, \cref{prop:formula_lambda} implies that 
		\begin{equation}
		v_0^{(k)}\left(\frac{\det(\nabla X)}{\mathrm{trace}(\nabla X)}\right)\bigg|_q=0,\qquad\forall\,k\in\mathbb N.
		\end{equation}
		Repeating the computation of the previous section we find for every $k\in\mathbb N$ (recall $\lambda_0(q)=0$)
		\begin{equation}
			v_0^{(k)}\left(\frac{\det(\nabla X)}{\mathrm{trace}(\nabla X)}\right)\bigg|_q
			=v_0^{(k)}\left(\lambda_0\cdot\frac{\lambda_1}{\lambda_1+\lambda_0}\right)\bigg|_q
			=v_0^{(k)}\left(\lambda_0\right)\bigg|_q\frac{\lambda_1(q)}{\lambda_1(q)+\lambda_0(q)}=\left.v_0^{(k)}\left(\lambda_0\right)\right|_q.
		\end{equation}
		Combining the two latter equations $v_0^{(k)}(\lambda_0)|_q=0$ for all $k\in\mathbb N$. Let $\gamma:(-\ve,\ve)\to M$ be an embedded integral curve of $v_0$ passing through $q$ at $t=0$, which is analytic. One has
		\begin{equation}
			\lambda_0(\gamma(t))= 0,\qquad \forall\,t\in(-\ve,\ve).
		\end{equation}
		By definition of $\gamma$, it holds $\dot\gamma(t)=v_0|_{\gamma(t)}$. We compute 
		\begin{equation}
			\nabla_{\dot\gamma(t)}X=\nabla_{v_0}X|_{\gamma(t)}=\lambda_0(\gamma(t))X|_{\gamma(t)}=0,\qquad \forall\,t\in(-\ve,\ve).
		\end{equation}
		Therefore the characteristic field $X$ is parallel along $\gamma$ and vanishes at $\gamma(0)=q$. It follows that $X|_{\gamma(t)}=0$ for all $t\in (-\ve,\ve)$ and that $\gamma$ is an analytic curve of characteristic points containing $q$.
	\end{proof}	
The proof of \cref{c:garofalo2} now follows easily.
 \begin{proof}[Proof of \cref{c:garofalo2}]
 	 Since $S$ is analytic and characteristic points are assumed to be isolated, by \cref{prop:curve_of_char} every characteristic point has finite order of degeneracy. Hence, according to \cref{c:garofalo}, $|X|^{-1}\in L^1_{loc}(S,\sigma)$ for any smooth measure $\sigma$. The estimate $|\mathcal{H}|\leq C |X|^{-1}$ (cf.\ for instance in  \cite[Section 2.1]{Rossi_deg}) concludes the proof.
 \end{proof}

\appendix

        \section{Some technical lemmas}\label{a:contact}

            \subsection{Orders of contact}\label{ssec:ord_cont}
            We recall the definition of order of vanishing of a function and order of contact of two curves, which will be needed in what follows.

            	Let $S$ be a smooth surface and $\mathcal C\subset S$ be an embedded curve. Let $q\in \mathcal C$ and let $f:S\to \mathbb R$ a smooth function. Let $\gamma:\mathbb R\to \mathcal C$ be a smooth  regular parametrization such that $\gamma(0)=q$, then the order of $f$ at $q$, along $\mathcal C$, is defined as
            	\begin{equation}
            		\mathrm{order}(f|_{\mathcal C},q)=\inf\left\{i\in\mathbb N\,:\,\left.\frac{d^i}{dt^{i}}f(\gamma(t))\right|_{t=0}\neq 0\right\}.
            	\end{equation}
 
            It is elementary to see that the order of a function at a point along a curve does not depend on the parametrization since for $f:\mathbb R\to\mathbb R$ a smooth function having order of vanishing $k\in\mathbb N$ at the origin, and $\varphi:\mathbb R\to\mathbb R$ a diffeomorphism such that $\varphi(0)=0$, then 
            	\begin{equation}\label{eq:reparametrization}
            		\left.\frac{d^k}{dt^k}f(\varphi(t))\right|_{t=0}=\left(\varphi'(0)\right)^kf^{(k)}(0).
            	\end{equation}
Let us define the order of contact of two embedded curves.
        	\begin{defi}\label{def:order_of_contact}
        		Let $S$ be a smooth surface and $\mathcal C_1,\mathcal C_2\subset S$ be two embedded curves. Then $\mathcal C_1,\mathcal C_2$ are said to have contact of order $k\in\mathbb N\cup\{+\infty\}$ at $q\in \mathcal C_1\cap\mathcal C_2$, and we write 
        		\begin{equation}
        			\mathrm{order}(\mathcal C_1\cap\mathcal C_2,q)=k,
        		\end{equation}
        		if $k\in \N$ is the largest integer such that there exist smooth regular parametrizations $\gamma_i:\mathbb R\to\mathcal C_i$, for  $i=1,2$, such that 
        		\begin{equation}
        			\gamma_1(0)=\gamma_2(0)=q,\qquad \frac{d^j\gamma_1}{dt^j}(0)=\frac{d^j\gamma_2}{dt^j}(0),\qquad \forall\,\,0\leq j\leq k.
        		\end{equation}
        	\end{defi}
        	If the order of contact of the intersection of two curves is large enough, one can use equivalently any of the curve to compute the order of vanishing of a function.
            \begin{lemma}\label{lem:order_lem1}
            	Let $S$ be a smooth surface, $f:S\to \mathbb R$ be a smooth function. Let $\mathcal C_1,\mathcal C_2\subset S$ be smooth embedded curves and assume that
            	\begin{equation}
            		\mathrm{order}(\mathcal C_1\cap\mathcal C_2,q)\geq \mathrm{order}(f|_{\mathcal C_1},q).
            	\end{equation}
            	Then
            	\begin{equation}
            		\mathrm{order}(f|_{\mathcal C_1},q)=\mathrm{order}(f|_{\mathcal C_2},q).
            	\end{equation}
            \end{lemma}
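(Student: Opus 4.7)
The plan is to reduce the statement to a direct jet computation along carefully chosen parametrizations of the two curves. The key observation is that when two curves have contact of order at least $k$ at $q$, the $k$-jets at $0$ of the compositions $f\circ\gamma_i$ coincide, so any vanishing/non-vanishing pattern of $f$ up to order $k$ is automatically shared between the two curves.

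Concretely, I would first recall that both quantities $\mathrm{order}(f|_{\mathcal C_i},q)$ are invariant under smooth regular reparametrizations, a consequence of the scaling identity \eqref{eq:reparametrization}, so the statement does not depend on which parametrizations one picks. Setting $k := \mathrm{order}(f|_{\mathcal C_1}, q)$ and invoking the hypothesis $\mathrm{order}(\mathcal C_1 \cap \mathcal C_2, q) \geq k$, Definition~\ref{def:order_of_contact} provides smooth regular parametrizations $\gamma_i: (-\ve,\ve)\to \mathcal C_i$ with $\gamma_i(0) = q$ and $\gamma_1^{(j)}(0) = \gamma_2^{(j)}(0)$ for every $0 \leq j \leq k$.

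The core step is then a straightforward application of Faà di Bruno's formula (equivalently, the iterated chain rule computed in a coordinate chart around $q$): for each $0 \leq j \leq k$, the value $(f \circ \gamma_i)^{(j)}(0)$ is a universal polynomial in the partial derivatives of $f$ at $q$ of order $\leq j$ and in the derivatives $\gamma_i^{(1)}(0),\dots,\gamma_i^{(j)}(0)$. Since these jets coincide for $i=1,2$, one obtains $(f \circ \gamma_1)^{(j)}(0) = (f \circ \gamma_2)^{(j)}(0)$ for all $0\leq j \leq k$. By definition of $k$, the left-hand side vanishes for $j < k$ and is non-zero at $j = k$; hence the same holds for $f \circ \gamma_2$, yielding $\mathrm{order}(f|_{\mathcal C_2}, q) = k$ as desired.

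I do not anticipate any substantial obstacle here: the argument is morally the statement that the order-$k$ Taylor coefficient of a smooth function along a smooth curve depends only on the $k$-jet of the curve. The one delicate point is that the hypothesis must force matching of derivatives up to order at least $k = \mathrm{order}(f|_{\mathcal C_1}, q)$, so that one can actually select parametrizations with coinciding $k$-jets; it is precisely this matching (guaranteed by $\mathrm{order}(\mathcal C_1 \cap \mathcal C_2, q) \geq k$) that makes the elementary polynomial identity above available at the critical exponent $j = k$, where non-vanishing is needed to conclude equality of orders rather than merely an inequality.
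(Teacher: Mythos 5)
Your proposal is correct and follows essentially the same route as the paper: both arguments pick parametrizations $\gamma_1,\gamma_2$ whose jets at $0$ agree up to order $m=\mathrm{order}(\mathcal C_1\cap\mathcal C_2,q)\geq k$, observe that consequently the derivatives of $f\circ\gamma_1$ and $f\circ\gamma_2$ at $0$ coincide up to order $k$, and conclude by parametrization-invariance of the order. Your version merely makes explicit (via Fa\`a di Bruno) the step the paper states without proof, namely that matching jets of the curves force matching derivatives of $f$ along them.
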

            
        \begin{proof} 
        	Let $m=\mathrm{order}(\mathcal C_1\cap\mathcal C_2,q)$ and $k=\mathrm{order}(f|_{\mathcal C},q)$. By definition of order of contact of curves there exist smooth regular parametrizations $\gamma_i:\mathbb R\to S$, $i=1,2$, such that for every smooth  $f:S\to \mathbb R$ 
        	\begin{equation}
        		\left.\frac{d^i}{dt^i}f(\gamma_1(t))\right|_{t=0}=\left.\frac{d^i}{dt^i}f(\gamma_2(t))\right|_{t=0},\qquad \forall\,0\leq i\leq m.
        	\end{equation}
        	Since by hypothesis $m\geq k$, we can conclude that $\mathrm{order}(f|_{\mathcal C_1},q)=\mathrm{order}(f|_{\mathcal C_2},q)$. 

        \end{proof}
    \begin{remark}  \label{r:lemmachiave}      Notice that 
           if $k=\mathrm{order}(f|_{\mathcal C_1},q)=\mathrm{order}(f|_{\mathcal C_2},q)$ and $\gamma_i:\mathbb R\to\mathcal C_i$, $i=1,2$, are smooth parametrizations satisfying $\gamma_1(0)=\gamma_2(0)=q$ and $\dot\gamma_1(0)=a\dot\gamma_2(0)$, with $a\neq 0$, then 
            	\begin{equation}  \label{eq:lemmachiave} 
            		\left.\frac{d^k}{dt^{k}}f(\gamma_1(t))\right|_{t=0}=a^k\left.\frac{d^k}{dt^{k}}f(\gamma_2(t))\right|_{t=0}.      
            	\end{equation}
	\end{remark}

  If a curve is a regular level set, we can then relate the two notions just introduced as follows.
            \begin{lemma}\label{lem:order=order_of_eq}
            	Let $S$ be a smooth surface, and $\mathcal C_1,\mathcal C_2\subset S$ be smooth embedded curves. Assume that $u_2:S\to \mathbb R$ is a submersion such that $\mathcal C_2=u_{2}^{-1}(0)$. Then 
            	\begin{equation}
            		\mathrm{order}(\mathcal C_1\cap\mathcal C_2,q)=\mathrm{order}(u_2|_{\mathcal C_1},q)-1.
            	\end{equation}
            \end{lemma}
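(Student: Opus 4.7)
The plan is to reduce the statement to a coordinate computation by straightening $u_2$. Since $u_2$ is a submersion at $q$ with $u_2(q)=0$, the implicit function theorem furnishes local coordinates $(x,y)$ on a neighbourhood of $q$ in $S$ such that $q=(0,0)$ and $u_2(x,y)=y$. In these coordinates $\mathcal C_2=\{y=0\}$ and a natural regular parametrization of $\mathcal C_2$ is $\gamma_2(s)=(s,0)$. This turns the problem into a statement about a smooth embedded curve $\mathcal C_1\ni q$ meeting the $x$-axis at the origin.

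Let $k:=\mathrm{order}(u_2|_{\mathcal C_1},q)$. First I would handle the transverse case $k=1$: then $\mathcal C_1$ is transverse to $\mathcal C_2$ at $q$, no regular parametrizations can have matching first derivatives at $q$, hence $\mathrm{order}(\mathcal C_1\cap\mathcal C_2,q)=0=k-1$. If $k\geq 2$, pick any regular parametrization $\gamma_1(t)=(x(t),y(t))$ of $\mathcal C_1$ with $\gamma_1(0)=q$; since $\dot y(0)=\tfrac{d}{dt}u_2(\gamma_1(t))|_{0}=0$ while $\gamma_1$ is regular, $\dot x(0)\neq 0$, so $\mathcal C_1$ is locally a graph $y=\phi(x)$ with $\phi(0)=0$. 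By invariance of the order of vanishing under smooth reparametrization (formula \eqref{eq:reparametrization}), the order of $\phi$ at $0$ equals the order of $t\mapsto u_2(\gamma_1(t))=y(t)=\phi(x(t))$, i.e.\ equal to $k$.

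Now I would compute the order of contact using the graph parametrization $\tilde\gamma_1(s)=(s,\phi(s))$ and $\gamma_2(s)=(s,0)$: these satisfy $\tilde\gamma_1^{(j)}(0)=\gamma_2^{(j)}(0)$ exactly when $\phi^{(j)}(0)=0$, which holds for all $j\leq k-1$ and fails at $j=k$. This shows $\mathrm{order}(\mathcal C_1\cap\mathcal C_2,q)\geq k-1$. For the reverse inequality, suppose regular parametrizations $\sigma_1,\sigma_2$ of $\mathcal C_1,\mathcal C_2$ have matching derivatives up to order $m$ at $0$. Write $\sigma_1(t)=(x(t),\phi(x(t)))$ and $\sigma_2(t)=(\xi(t),0)$; matching the second components forces $\phi(x(t))$ to vanish at $0$ with order $>m$, but again by \eqref{eq:reparametrization} this order equals the order of $\phi$ at $0$, which is $k$, giving $m\leq k-1$.

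The argument is essentially bookkeeping once the straightening is done, so I do not expect a genuine obstacle; the only delicate point is making sure the order-of-contact bound is taken as a supremum over \emph{all} pairs of regular parametrizations, which is exactly where the invariance identity \eqref{eq:reparametrization} is used, combined with the fact that any regular parametrization of $\mathcal C_1$ differs from the graph parametrization by a diffeomorphism of $\mathbb R$ fixing $0$.
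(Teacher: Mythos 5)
Your proof is correct and follows essentially the same route as the paper's: straighten $u_2$ into a coordinate $y$ via the implicit function theorem, split into the transverse case and the tangent (graph) case, and compare orders of vanishing of the second component using the reparametrization invariance \eqref{eq:reparametrization}. You are in fact slightly more careful than the paper in verifying that the order of contact, being a supremum over \emph{all} pairs of regular parametrizations, is attained by the graph parametrization.
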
            
        \begin{proof}
        	There exist local coordinates $(x,y)$ near $q$ such that 
  $q=(0,0)$ and $u_2(x,y)=y$.
        	Let $\gamma_1:\mathbb R\to\mathcal C_1$ be a regular parametrization with $\gamma_1(0)=(0,0)$ and let us write
$
        		\gamma_1(t)=(x_1(t),y_1(t))
$.
        	Since $q\in\mathcal C_1\cap\mathcal C_2$, we deduce that $x_1(0)=y_1(0)=0$. 
	
	Let $n=\mathrm{order}(\mathcal C_1\cap\mathcal C_2,(0,0))$. If $n=0$, then necessarily $\dot y_1(0)\neq 0$. On the other hand
        	\begin{equation}
        		\mathrm{order}(y|_{\mathcal C_1},(0,0))=\mathrm{order}(y\circ\gamma_1,0)=\mathrm{order}(y_1,0)=1=\mathrm{order}(\mathcal C_1\cap\mathcal C_2,(0,0))+1.
        	\end{equation}
       If $n\geq 1$, then necessarily $\dot x_1(0)\neq 0$, therefore up to reparametrization we may assume $x_1(t)=t$, or equivalently
       $
        		\gamma_1(t)=(t,y_1(t))
$.
        	Since $\mathcal C_2=\{(x,0)\in\mathbb R^2\mid x\in\mathbb R\}$ we deduce that 
        	\begin{equation}
        		n=\mathrm{order}(y_1,0)-1=\mathrm{order}(y\circ\gamma_1,0)-1=\mathrm{order}(y|_{\mathcal C_1},(0,0))-1. \qedhere
        	\end{equation}
        \end{proof}
        We also list here another useful property of the order of vanishing.
    	\begin{lemma}\label{lem:order_orbits}
    	Let $f:\mathbb R^2\to\mathbb R$ be a smooth function and $X,Y\in\mathfrak X(\mathbb R^2)$ be vector fields, linearly independent at the origin. Let $\mathcal C_1,\mathcal C_2$ be the orbits of the flows of $X$ and $X+fY$ through the origin, where $f\in C^\infty(\mathbb R^2)$, then 
    	\begin{equation}
    	\mathrm{order}(\mathcal C_1\cap\mathcal C_2,(0,0))= \mathrm{order}(f|_{\mathcal C_1},(0,0)).
    	\end{equation}
    	\end{lemma}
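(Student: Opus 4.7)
The plan is to rectify the vector field $X$ via the flow-box theorem, recast $\mathcal C_2$ as the graph of a single smooth function, determine the order of vanishing of this function at the origin by a bootstrap argument, and finally translate the outcome into an order of contact through Lemma~\ref{lem:order=order_of_eq}. Choose local coordinates $(x,y)$ centred at the origin with $X=\partial_x$; then $\mathcal C_1=\{y=0\}$, and writing $Y=a(x,y)\partial_x+b(x,y)\partial_y$ the linear independence of $X,Y$ at the origin forces $b(0,0)\neq 0$. Set $k:=\mathrm{order}(f|_{\mathcal C_1},(0,0))$, so that $f(x,0)=x^{k}g(x)$ with $g(0)\neq 0$ (the case $k=+\infty$ being handled separately), and Taylor expansion in $y$ gives $f(x,y)=f(x,0)+y\,\tilde h(x,y)$ for some smooth $\tilde h$. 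The case $k=0$ is immediate: then $(X+fY)(0,0)$ has nonzero $y$-component, so $\mathcal C_1$ and $\mathcal C_2$ meet transversally and $\mathrm{order}(\mathcal C_1\cap\mathcal C_2,(0,0))=0$. For $k\geq 1$ one has $f(0,0)=0$ and $(X+fY)(0,0)=\partial_x$, hence the implicit function theorem locally represents $\mathcal C_2$ as a graph $y=\phi(x)$ with
\begin{equation*}
\phi'(x)=\frac{f(x,\phi(x))\,b(x,\phi(x))}{1+f(x,\phi(x))\,a(x,\phi(x))},\qquad \phi(0)=0.
\end{equation*}

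The core of the argument is a bootstrap on the order of vanishing of $\phi$. Assuming $\phi(x)=O(x^{m})$ for some $1\leq m\leq k$, the identity
\begin{equation*}
f(x,\phi(x))=x^{k}g(x)+\phi(x)\,\tilde h(x,\phi(x))=O(x^{m})
\end{equation*}
together with boundedness of $b$ and of the denominator yields $\phi'(x)=O(x^{m})$, hence $\phi(x)=O(x^{m+1})$. Iterating up to $m=k$ one obtains $\phi(x)=O(x^{k+1})$; feeding this sharper estimate back into the ODE produces the leading-order relation $\phi'(x)=g(0)\,b(0,0)\,x^{k}+O(x^{k+1})$, so that $\phi(x)=\tfrac{g(0)\,b(0,0)}{k+1}\,x^{k+1}+O(x^{k+2})$, whose leading coefficient is nonzero precisely because $g(0)\neq 0$ and $b(0,0)\neq 0$. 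Consequently $\mathrm{order}(y|_{\mathcal C_2},(0,0))=k+1$, and Lemma~\ref{lem:order=order_of_eq} applied to the submersion $u(x,y)=y$, whose zero set is $\mathcal C_1$, gives $\mathrm{order}(\mathcal C_1\cap\mathcal C_2,(0,0))=k$. For $k=+\infty$ the bootstrap iterates indefinitely: since $f(x,0)=O(x^{N})$ for every $N$, one obtains $\phi(x)=O(x^{N})$ for all $N$, forcing $\phi^{(j)}(0)=0$ for every $j\in\mathbb N$, and the two curves have infinite order of contact.

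The main obstacle I expect is the bookkeeping in the bootstrap: one must verify that each improved estimate on $\phi$ propagates consistently through the nonlinear right-hand side of the ODE, that the denominator $1+f\,a$ remains bounded away from zero in a neighbourhood of the origin (which is ensured by $f(0,0)=0$ for $k\geq 1$), and that the identification of the $x^{k+1}$-coefficient of $\phi$ uses \emph{both} nondegeneracies $g(0)\neq 0$ (from the definition of $k$) and $b(0,0)\neq 0$ (from the linear independence of $X$ and $Y$ at the origin) in an essential way.
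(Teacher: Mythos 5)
Your proof is correct. The overall skeleton coincides with the paper's: both arguments rectify $X$ by the flow-box theorem so that $\mathcal C_1=\{y=0\}$ and $b(0,0)\neq 0$, both dispose of the case $f(0,0)\neq 0$ by transversality, and both reduce the claim, via Lemma~\ref{lem:order=order_of_eq} applied to the submersion $y$ cutting out $\mathcal C_1$, to showing that $y$ vanishes to order $k+1$ along $\mathcal C_2$. Where you diverge is in how that order is computed. The paper stays algebraic: since $\mathcal C_2$ is an integral curve of $\partial_x+fY$, the successive derivatives of $y$ along $\mathcal C_2$ are the iterated applications $(\partial_x+fY)^{j}(fb)$, and an induction gives $(\partial_x+fY)^{j}(fb)=b\,\partial_x^{j}f+\sum_{i<j}a_i\,\partial_x^{i}f$, so that $b(0,0)\neq 0$ forces the first nonvanishing term to occur exactly when $\partial_x^{j}f(0,0)\neq 0$, i.e.\ at $j=k$. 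You instead represent $\mathcal C_2$ as a graph $y=\phi(x)$, derive the ODE for $\phi$, and bootstrap its order of vanishing. Both routes are sound; the paper's induction treats all orders, including $k=+\infty$, in a single formula with no remainder bookkeeping, while your bootstrap is more quantitative and additionally yields the explicit leading coefficient $g(0)\,b(0,0)/(k+1)$ of $\phi$, which the paper's argument does not produce. One cosmetic remark: the graph representation of $\mathcal C_2$ comes from reparametrizing the integral curve by its $x$-component (inverse function theorem applied to $t\mapsto x(t)$, using $\dot x(0)=1+f(0,0)a(0,0)=1$) rather than from the implicit function theorem, but this does not affect the argument.
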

   		\begin{proof}
   		If $f(0,0)\neq 0$, since $X,Y$ are linearly independent at the origin one has $$\mathrm{order}(\mathcal C_1\cap\mathcal C_2,(0,0))=0=\mathrm{order}(f|_{\mathcal C_1},(0,0)).$$ Assume now that $f(0,0)=0$. Since $X,Y$ are linearly independent at the origin, there exist coordinates $(x,y)$ such that 
   		\begin{equation}
   			X=\partial_x,\qquad X+fY=\partial_x+f(a\partial_x+b\partial_y),
   		\end{equation}
   		where $a,b$ are some smooth functions and $b(0,0)\neq 0$.
  		Notice that in these coordinates $$\mathcal C_1=\{(x,y)\in\mathbb R^2\,:\, y=0\}.$$From \cref{lem:order=order_of_eq} we can infer that
  		\begin{equation}\label{eq:chain_order_ineq}
  			\begin{aligned}
  				\mathrm{order}(\mathcal C_1\cap\mathcal C_2,(0,0))&=\mathrm{order}(y|_{\mathcal C_2},(0,0))-1=\inf\{k\in\mathbb N\,:\,(\partial_x+fY)^{(k)}(y)|_{(0,0)}\neq 0\}-1\\
  				&= \inf\{k\in\mathbb N\,:\,(\partial_x+fY)^{(k-1)}(\partial_x+fY)(y)|_{(0,0)}\neq 0\}-1\\
  				&=\inf\{k\in\mathbb N\,:\,(\partial_x+fY)^{(k-1)}(fb)|_{(0,0)}\neq 0\}-1.
  			\end{aligned}
  		\end{equation}
  		By induction one can show that, for any $k\in\mathbb N$, there exist smooth functions $a_0,\dots,a_{k-1}$ such that 
  		\begin{equation}
		(\partial_x+fY)^k(fb)=b\partial^k_{x}f+\sum_{j=0}^{k-1}a_j\partial_x^jf.
		\end{equation}
  		Since $b(0,0)\neq 0$, we see that the smallest $k\in\mathbb N$ such that $(\partial_x+fY)^k(fb)|_{(0,0)}\neq 0$ coincides with the smallest $k$ for which $\partial_x^kf(0,0)\neq 0$. In other words 
  		\begin{equation}
  			\begin{aligned}
  				\inf\{k\in\mathbb N\,:\,(\partial_x+fY)^{(k-1)}(fb)|_{(0,0)}\neq 0\}
  				&= \inf\{k\in\mathbb N\,:\,(\partial_x)^{(k-1)}(f)|_{(0,0)}\neq 0\}\\
  				&=\mathrm{order}(f|_{\mathcal C_1},q)+1.
  			\end{aligned}
  		\end{equation}
  		The proof can be concluded combining the latter equality with \eqref{eq:chain_order_ineq}.
  	\end{proof}
 
        \subsection{The degree of a class of singularities}\label{app:mild_wt_connection}

\begin{lemma}\label{lem:index_lem}
    Let $\nu,\mu\in\mathbb R\setminus\{0\}$ and let $k\in\mathbb N$ and consider the vector field in $\R^{2}$
    \begin{equation}
        X=\nu x\partial_x+\frac{\mu}{k!} y^k\partial_y.
    \end{equation}
    Then, the index of the origin can be computed as
    \begin{equation}
        \mathrm{ind}(0,0)=\begin{cases}
            0,\qquad &k\,\,\text{even},\\
            \mathrm{sign}(\nu\mu),\qquad &k\,\,\text{odd}.
        \end{cases}
    \end{equation}
\end{lemma}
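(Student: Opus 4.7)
The plan is to reduce to computations controlled by homotopy invariance of the Brouwer degree. For $\delta > 0$ small enough the origin is the only zero of $X$ in $\overline{B_\delta}$ (since $\nu x = 0$ forces $x = 0$, and then $\frac{\mu}{k!}y^k = 0$ forces $y = 0$), and any smooth homotopy $X_s$ on $\overline{B_\delta}$ with no zeros on $\partial B_\delta$ yields the same degree of $X_s/|X_s|$ on $\partial B_\delta$; I will use this principle twice, in two different ways for the two parities of $k$.

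For $k$ odd, I would construct the homotopy
\begin{equation*}
X_s = \nu x \,\partial_x + \mu\Bigl((1-s)y + \tfrac{s}{k!}y^k\Bigr)\partial_y, \qquad s \in [0,1],
\end{equation*}
interpolating from the linear vector field at $s=0$ to (a scalar multiple of) $X$ at $s=1$. The zero set is cut out by $x = 0$ and $y\bigl((1-s) + \tfrac{s}{k!}y^{k-1}\bigr) = 0$; since $k-1$ is even the bracketed factor is a sum of nonnegative terms that cannot both vanish simultaneously for $s \in [0,1]$, so the only zero is the origin throughout the homotopy. Hence $\mathrm{ind}(0,0)$ for $X$ equals that of the diagonal linear field $\nu x\,\partial_x + \mu y\,\partial_y$, which is $\mathrm{sign}(\det \mathrm{diag}(\nu,\mu)) = \mathrm{sign}(\nu\mu)$.

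For $k$ even the homotopy above would force a sign mismatch, so instead I would \emph{perturb} by a small constant in the $\partial_y$ component: set
\begin{equation*}
Y_\epsilon = \nu x\,\partial_x + \tfrac{\mu}{k!}(y^k - \epsilon)\,\partial_y, \qquad \epsilon > 0 \text{ small},
\end{equation*}
which has exactly two zeros $p_\pm = (0, \pm\epsilon^{1/k})$, both inside $B_\delta$, and which has no zeros on $\partial B_\delta$. The linear homotopy between $Y_\epsilon$ and $X$ stays zero-free on $\partial B_\delta$ for $\epsilon$ small, so $\mathrm{ind}(0,0) = \mathrm{ind}(p_+) + \mathrm{ind}(p_-)$. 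At $p_\pm$ the differential is diagonal with entries $\nu$ and $\tfrac{\mu}{(k-1)!}(\pm\epsilon^{1/k})^{k-1}$; since $k-1$ is odd these two values have opposite signs, so $\mathrm{ind}(p_+) = -\mathrm{ind}(p_-) = \mathrm{sign}(\nu\mu)$ and the indices cancel, giving $\mathrm{ind}(0,0) = 0$.

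The main obstacle is verifying that during each homotopy no zero escapes to $\partial B_\delta$; in the odd case this reduces to the positivity estimate on the bracketed factor above, and in the even case to choosing $\epsilon$ small enough that $\epsilon^{1/k} < \delta$ and that the linear interpolation between $X$ and $Y_\epsilon$, which differs only by a constant in the $\partial_y$ component, cannot vanish on $\partial B_\delta$ where $|X|$ is bounded below by a positive constant depending only on $\delta, \nu, \mu, k$.
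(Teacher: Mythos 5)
Your proof is correct. For $k$ odd you use essentially the same argument as the paper: a linear interpolation in the $\partial_y$-component between $y^k$ and $y$, with the key observation that the interpolated zero set stays pinned at the origin because $k-1$ is even, reducing to the nondegenerate linear field of index $\mathrm{sign}(\nu\mu)$. For $k$ even your route genuinely differs. The paper argues directly that the Gauss map $p\mapsto X/|X|$ on a small circle is not surjective --- since $\sin^k\theta\geq 0$, its image lies in a closed half-circle, which is contractible, so the degree vanishes. You instead perturb $y^k$ to $y^k-\epsilon$, splitting the degenerate zero into the two nondegenerate zeros $(0,\pm\epsilon^{1/k})$, and observe that their local indices are $\pm\,\mathrm{sign}(\nu\mu)$ (because $(\pm\epsilon^{1/k})^{k-1}$ changes sign, $k-1$ being odd) and cancel. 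The paper's non-surjectivity argument is shorter and requires no transversality bookkeeping; your perturbation argument is more robust in the sense that it computes the degree as a sum of signed local contributions and would generalize to situations where the image of the Gauss map is not obviously contained in a contractible set. One cosmetic remark: in the odd case, your statement that the two nonnegative terms in the bracket ``cannot both vanish simultaneously'' fails precisely at $s=1$, $y=0$; what you actually need (and what holds) is that the bracket is strictly positive whenever $y\neq 0$, so that the full second component $y\bigl((1-s)+\tfrac{s}{k!}y^{k-1}\bigr)$ vanishes only at $y=0$.
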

\begin{proof}
    The index of the origin for $X$ coincides with the degree of the map
    \begin{equation}
    \varphi:\mathbb S^1\to \mathbb S^1,\qquad \varphi(\cos\theta,\sin\theta)=\frac{(\nu\cos\theta,\frac{\mu}{k!}\sin^k\theta)}{|(\nu\cos\theta,\frac{\mu}{k!}\sin^k\theta)|}.
    \end{equation}
For $k$ even, $\varphi$ is not surjective, hence its image is contractible and its degree is zero.
Now assume $k$ odd. For $s\in[0,1]$ consider the vector field
    \begin{equation}
    X_s=\nu x\partial x+ \frac{\mu}{k!}\left(s y+(1-s)y^k\right)\partial_y,
    \end{equation}
and note that the only vanishing point of $X_s$ is the origin since $k$ odd. Therefore the map
\begin{equation}
F:[0,1]\times \mathbb S^1\to \mathbb S^1,\qquad F(s,(x,y))=\left.\frac{X_s}{|X_s|}\right|_{(x,y)}
\end{equation}
is a smooth homotopy. Moreover, $X_0=X$, thus the degree at the origin of $X$ equals the degree at the origin of $X_1=\nu x\partial_x+\frac{\mu}{k!}y\partial_y$. The latter field has a non degenerate singularity at the origin, therefore
\begin{equation}
\mathrm{ind}(0,0)=\mathrm{sign}\det(D_{(0,0)} X_1)=\mathrm{sign}\left(\det\begin{pmatrix}
\nu & 0 \\ 0 & \frac{\mu}{k!}
\end{pmatrix}\right)=\mathrm{sign}(\nu\mu).
\end{equation} 
\end{proof}

             \section{The horizontal plane in the Heisenberg group}\label{a:heis}
        In this section, we explicitly compute the sequence $\{K^{\ve}\sigma^{\ve}\}\subset C^1(S)^*$ for the horizontal plane in the Heisenberg group. The following lemma will be used throughout the section.
         \begin{lemma}\label{lem:delta_conv}
         	Let $\{\psi_\ve\}_{\ve>0}\subset L^1(\mathbb R^n)$ be a sequence of non-negative functions. If, for every $\rho>0$
         	\begin{equation}
	         		(i)\,\lim_{\ve\to 0}\int_{B_\rho(0)}\psi_\ve(x)dx=2\pi,\qquad(ii)\,\lim_{\ve\to 0}\int_{\mathbb R^n\setminus B_\rho(0)}\psi_\ve(x)dx=0,
         	\end{equation}
         then $\psi_\ve\to2\pi\delta_0$ weakly in $C^1_c(S)^*$, i.e., for any $\varphi\in C^1_c(S)$ it holds 
         \begin{equation}
         	\lim_{\ve\to 0}\int_{\mathbb R^n}\varphi(x)\psi_\ve(x)dx=2\pi\varphi(0).
         \end{equation}
         \end{lemma}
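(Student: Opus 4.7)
The plan is a standard $\varepsilon/3$-style splitting argument, exploiting the non-negativity of $\psi_\varepsilon$ together with the continuity and compact support of the test function. Fix $\varphi\in C^{1}_{c}(\mathbb{R}^{n})$ and set $M:=\|\varphi\|_{\infty}$, which is finite since $\varphi$ has compact support. I would start by writing, for any $\rho>0$,
\begin{equation}
\int_{\mathbb{R}^{n}}\varphi\,\psi_{\varepsilon}\,dx-2\pi\varphi(0)
=\underbrace{\int_{B_{\rho}(0)}\bigl(\varphi(x)-\varphi(0)\bigr)\psi_{\varepsilon}(x)\,dx}_{=:I_{1}(\varepsilon,\rho)}
+\underbrace{\varphi(0)\!\left(\int_{B_{\rho}(0)}\psi_{\varepsilon}\,dx-2\pi\right)}_{=:I_{2}(\varepsilon,\rho)}
+\underbrace{\int_{\mathbb{R}^{n}\setminus B_{\rho}(0)}\varphi\,\psi_{\varepsilon}\,dx}_{=:I_{3}(\varepsilon,\rho)}.
\end{equation}

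The strategy is to fix $\eta>0$ arbitrary, choose $\rho$ small enough to control $I_{1}$ uniformly in $\varepsilon$, and then exploit hypotheses $(i)$ and $(ii)$ to make $I_{2}$ and $I_{3}$ small as $\varepsilon\to 0$. For $I_{1}$, by continuity of $\varphi$ at $0$ I can pick $\rho=\rho(\eta)>0$ such that $|\varphi(x)-\varphi(0)|\leq \eta$ for all $x\in B_{\rho}(0)$. Using $\psi_{\varepsilon}\geq 0$, this yields
\begin{equation}
|I_{1}(\varepsilon,\rho)|\leq \eta\int_{B_{\rho}(0)}\psi_{\varepsilon}\,dx,
\end{equation}
and by hypothesis $(i)$ the right-hand side is bounded, for $\varepsilon$ small, by $\eta(2\pi+1)$. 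With the same $\rho$ fixed, hypothesis $(i)$ gives $\limsup_{\varepsilon\to 0}|I_{2}(\varepsilon,\rho)|=0$, and hypothesis $(ii)$ gives
\begin{equation}
|I_{3}(\varepsilon,\rho)|\leq M\int_{\mathbb{R}^{n}\setminus B_{\rho}(0)}\psi_{\varepsilon}\,dx\xrightarrow[\varepsilon\to 0]{}0.
\end{equation}

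Combining the three bounds, for every $\eta>0$ one obtains
\begin{equation}
\limsup_{\varepsilon\to 0}\left|\int_{\mathbb{R}^{n}}\varphi\,\psi_{\varepsilon}\,dx-2\pi\varphi(0)\right|\leq \eta(2\pi+1),
\end{equation}
and since $\eta>0$ is arbitrary the limit is zero, which is the claim. I do not anticipate a genuine obstacle: the hypotheses $(i)$ and $(ii)$ are precisely engineered to make this three-piece decomposition work, and the non-negativity of $\psi_{\varepsilon}$ is what allows the elementary bound $|I_{1}|\leq \eta\int_{B_{\rho}}\psi_{\varepsilon}$. The only mild subtlety is the order of quantifiers, namely fixing $\rho$ first as a function of $\eta$ and only afterwards sending $\varepsilon\to 0$, which is why the argument produces a $\limsup$ bound rather than a direct equality.
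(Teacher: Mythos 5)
Your proposal is correct and follows essentially the same route as the paper: both are the standard approximate-identity argument, splitting the integral near and away from the origin, using continuity of $\varphi$ at $0$ together with non-negativity of $\psi_\ve$ on the ball, hypothesis $(i)$ for the mass near the origin, and hypothesis $(ii)$ for the tail. The only cosmetic difference is that the paper first combines $(i)$ and $(ii)$ to replace $2\pi\varphi(0)$ by $\lim_{\ve\to 0}\int\varphi(0)\psi_\ve$ and then sends $\rho\to 0$ at the end, whereas you isolate that discrepancy as your term $I_2$ and conclude via a $\limsup$ in $\eta$ — the same estimates in a slightly different order.
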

     	\begin{proof}
     	 Let $\varphi\in C^1_c(\mathbb R)$. Combining $(i)$ and $(ii)$ one finds
     	\begin{equation}
     		\begin{aligned}
     			\lim_{\ve\to 0}\left|\int_{\mathbb R^n}\varphi(x)\psi_\ve(x)dx-2\pi\varphi(0)\right|&=\left|\lim_{\ve\to 0}\int_{\mathbb R^n}\varphi(x)\psi_\ve(x)dx-2\pi\varphi(0)\right|\\
     			&=\left|\lim_{\ve\to 0}\int_{\mathbb R^n}\varphi(x)\psi_\ve(x)dx-\lim_{\ve\to 0}\int_{\mathbb R^n}\varphi(0)\psi_\ve(x)dx\right|\\
     			&=\lim_{\ve\to 0}\left|\int_{\mathbb R^n}(\varphi(x)-\varphi(0))\psi_\ve(x)dx\right|.
     		\end{aligned}
     	\end{equation}
     	Therefore, for every $\rho>0$ we have 
     	\begin{equation}
     		\begin{aligned}
     	\lim_{\ve\to 0}\left|\int_{\mathbb R^n}\varphi(x)\psi_\ve(x)dx-2\pi\varphi(0)\right|&=\lim_{\ve\to 0}\left|\int_{\mathbb R^n}(\varphi(x)-\varphi(0))\psi_\ve(x)dx\right|\leq\lim_{\ve\to 0} \int_{\mathbb R^n}|\varphi(x)-\varphi(0)|\psi_\ve(x)dx\\
     	&=\lim_{\ve\to 0} \int_{B_\rho(0)}|\varphi(x)-\varphi(0)|\psi_\ve(x)dx+\lim_{\ve\to 0} \int_{\mathbb R^n\setminus B_\rho(0)}|\varphi(x)-\varphi(0)|\psi_\ve(x)dx\\
     	&\leq 2\pi\|\varphi-\varphi(0)\|_{L^\infty(B_\rho(0))}.
     		\end{aligned}
     	\end{equation}
     Since $\rho>0$ is arbitrary and $\varphi$ is smooth, taking the limit of $\rho\to 0$ of both sides of the latter inequality yields the result.
     	\end{proof}
         \subsection{The plane $z=0$ in the Heisenberg group}
        Let $(\mathbb R^3,\mathcal D,g)$ be the Heisenberg group: the contact distribution $\mathcal D$ is expressed as the kernel of the normalized contact form $\omega$ which, in cylindrical coordinates $x=r\cos\theta, y=r\sin\theta, z$, reads 
        \begin{equation}\label{eq:example_form_metrics0}
        	\omega=dz+\frac{r^2}{2}d\theta.
        \end{equation}
        The extended metrics \eqref{eq:g_ve_properties0} have the expression 
        \begin{equation}
        	g^\ve=dr^2+r^2d\theta^2+\frac{1}{\ve}\omega\otimes \omega.
        \end{equation}
        Let $S=\{(x,y,z)\mid z=0\}\subset \R^{3}$. Substituting the expression of $\omega$ into the one of $g^\ve$ and imposing $dz=0$ we obtain
        \begin{equation}
        	g^\ve|_S=dr^2+f_\ve(r)^2d\theta^2,\qquad\text{where}\qquad  f_\ve(r)=r\sqrt{1+\frac{r^2}{4\ve}}.
        \end{equation}
     	A standard computation shows that 
     	\begin{equation}
     		K^\ve\sigma^\ve=-f''_\ve(r)drd\theta=-\frac{r \left(r^2 + 6\varepsilon\right)}{\sqrt{\varepsilon} \left(r^2 + 4\varepsilon\right)^{3/2}}drd\theta.
     	\end{equation}
     	The form $d\alpha$ in the statement of \cref{thm:gauss_bonn_lim_intro0} can be computed as
     	\begin{equation}
     		d\alpha=-\left(\lim_{\ve\to0}\sqrt{\ve}\frac{r \left(r^2 + 6\varepsilon\right)}{\sqrt{\varepsilon} \left(r^2 + 4\varepsilon\right)^{3/2}}\right)drd\theta=-drd\theta.
     	\end{equation}
\begin{lemma}
	For every $\varphi\in C^1_{c}(\mathbb R^2)$, the following equality holds 
	\begin{equation}
		\lim_{\ve\to 0}\int_{\mathbb R^2}\varphi\left(K^\ve\sigma^\ve-\frac{\mu_{-1}}{\sqrt\ve}\right)
		=2\pi\varphi(0).
	\end{equation}
\end{lemma}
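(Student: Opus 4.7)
The plan is to reduce the statement to an explicit one-dimensional computation in the radial variable $r$ and then apply \cref{lem:delta_conv}.

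First I would use the expressions of $K^\ve\sigma^\ve$ and $\mu_{-1}$ obtained just before the statement to write
\begin{equation*}
K^\ve\sigma^\ve - \frac{\mu_{-1}}{\sqrt\ve} = \Psi_\ve(r)\, dr\wedge d\theta, \qquad \Psi_\ve(r) := \frac{1}{\sqrt{\ve}} - f_\ve''(r).
\end{equation*}
Since $dr\wedge d\theta = (1/r)\, dx\wedge dy$ away from the origin, this same 2-form coincides in Cartesian coordinates with the absolutely continuous form $\psi_\ve(x,y)\,dx\wedge dy$ where $\psi_\ve(x,y) = \Psi_\ve(r)/r$. A direct polynomial inequality, equivalent to $(r^2+4\ve)^3 \geq (r^3+6\ve r)^2$ (which reduces to $12\ve^2 r^2+64\ve^3\geq 0$), shows $\Psi_\ve(r)\geq 0$ for all $r\geq 0$ and all $\ve>0$.

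Next I would compute the mass of $\Psi_\ve$ on radial intervals. From the explicit formula $f_\ve'(r) = (r^2+2\ve)/(\sqrt{\ve}\sqrt{4\ve+r^2})$, which gives $f_\ve'(0)=1$, the fundamental theorem of calculus yields
\begin{equation*}
\int_0^R \Psi_\ve(r)\,dr = \frac{R}{\sqrt\ve} + 1 - f_\ve'(R) = 1 + \frac{1}{\sqrt\ve}\cdot\frac{R\sqrt{4\ve+R^2}-R^2-2\ve}{\sqrt{4\ve+R^2}}.
\end{equation*}
A short Taylor expansion of the square root shows that the second term is $O(\ve^{3/2}/R^2)$ as $\ve \to 0$ with $R$ fixed, and $O(\ve^{3/2}/R^2)$ as $R\to\infty$ with $\ve$ fixed. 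Consequently $\int_0^\infty \Psi_\ve\,dr = 1$ for every $\ve>0$, and $\int_0^\rho\Psi_\ve\,dr\to 1$ as $\ve\to 0$ for each fixed $\rho>0$.

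Translating back to Cartesian coordinates via the Jacobian, these two estimates give
\begin{equation*}
\int_{B_\rho(0)} \psi_\ve\, dx\,dy = 2\pi\int_0^\rho \Psi_\ve(r)\,dr \xrightarrow{\ve\to 0} 2\pi, \qquad \int_{\mathbb R^2\setminus B_\rho(0)} \psi_\ve\,dx\,dy = 2\pi\!\int_\rho^\infty \Psi_\ve(r)\,dr \xrightarrow{\ve\to 0} 0,
\end{equation*}
which are precisely hypotheses $(i)$ and $(ii)$ of \cref{lem:delta_conv} with $n=2$ (note that $\psi_\ve\geq 0$ thanks to the first step, so the lemma applies). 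The lemma then yields $\psi_\ve\,dx\,dy \to 2\pi\delta_0$ against $C^1_c(\mathbb R^2)$ test functions, which is exactly the desired limit. The argument is essentially routine; the only delicate point is the cancellation in $R/\sqrt\ve - f_\ve'(R)$, which must be handled uniformly so that the mass is not lost either at the origin or at infinity.
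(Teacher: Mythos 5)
Your proposal is correct and follows essentially the same route as the paper: express the difference as a radial density $\psi_\ve=\Psi_\ve(r)/r$, compute its mass on $B_\rho(0)$ and on the complement via the antiderivative $-f_\ve'(r)+r/\sqrt\ve$ (using $f_\ve'(0)=1$), and invoke \cref{lem:delta_conv}. The only difference is cosmetic: you verify the positivity $\Psi_\ve\geq 0$ by an explicit polynomial inequality (which the paper merely asserts) and you obtain the tail estimate from the exact identity $\int_0^\infty\Psi_\ve\,dr=1$ rather than recomputing the limit.
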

\begin{proof}
	Based on the computations above, we can write 
\begin{equation}
	K^\ve\sigma^\ve-\frac{d\alpha}{\sqrt\ve}=\left(\frac{1}{\sqrt{\ve}}-\frac{r \left(r^2 + 6\varepsilon\right)}{\sqrt{\varepsilon} \left(r^2 + 4\varepsilon\right)^{3/2}}\right)drd\theta=\left(\frac{1}{r\sqrt{\ve}}-\frac{ \left(r^2 + 6\varepsilon\right)}{\sqrt{\varepsilon} \left(r^2 + 4\varepsilon\right)^{3/2}}\right)dxdy.
\end{equation}
It is sufficient to show that the function 
\begin{equation}
	\psi_\ve=\frac{1}{r\sqrt{\ve}}-\frac{ \left(r^2 + 6\varepsilon\right)}{\sqrt{\varepsilon} \left(r^2 + 4\varepsilon\right)^{3/2}},
\end{equation}
 	satisfies the hypothesis of \cref{lem:delta_conv}. Notice that $\psi_\ve>0$. It remains to show $(i)$ and $(ii)$.
 	
 	\noindent
 	$(i)$  Let $\rho>0$, then
 	\begin{equation}\label{eq:cond_1}
 		\begin{aligned}
 			\int_{B_\rho(0)}\psi_\ve(x,y)dxdy&=2\pi \int_0^{\rho}\left(-f_\ve''(r)+\frac{1}{\sqrt\ve}\right)dr=2\pi\left.\left(-f'_\ve(r)+\frac{r}{\sqrt\ve}\right)\right|_{0}^{\rho}\\
 			&=2\pi\left(-\frac{2 \ve + \rho^{2}}{\sqrt{\ve} \sqrt{4 \ve + \rho^{2}}}+\frac{\rho}{\sqrt\ve}\right)+2\pi,
 		\end{aligned}
 	\end{equation}
 	where we have used the explict expression of $f'_\ve$
 	\begin{eqnarray}
 		f'_\ve(r)=\frac{2 \ve + r^{2}}{\sqrt{\ve} \sqrt{4 \ve + r^{2}}}.
 	\end{eqnarray}
 	Taking the limit of $\ve\to 0$ of both sides in \eqref{eq:cond_1} gives 
 	\begin{equation}
 		\begin{aligned}
 				\lim_{\ve\to 0}\int_{B_\rho(0)}\psi_\ve(x,y)dxdy&=2\pi+2\pi\lim_{\ve\to 0}\frac{\rho\sqrt{4 \ve + \rho^{2}}-\rho^{2}-2\ve}{\sqrt{\ve} \sqrt{4 \ve + \rho^{2}}}\\
 				&=2\pi+2\pi\lim_{\ve\to 0}\frac{\rho^2\sqrt{1+\frac{4 \ve}{\rho^2}}-\rho^{2}-2\ve}{\sqrt{\ve} \sqrt{4 \ve + \rho^{2}}}\\
 				&=2\pi+2\pi\lim_{\ve\to 0}\frac{\rho^2+2\ve+o(\ve)-\rho^2-2\ve}{\sqrt{\ve} \sqrt{4 \ve + \rho^{2}}}=2\pi,
 		\end{aligned}
 	\end{equation}
 	which is condition $(i)$ of \cref{lem:delta_conv}.
 	
 	\noindent
 	$(ii)$  Fix $\rho>0$, then, for $x\in \mathbb R^n\setminus B_\rho(0)$ we have 
 	\begin{equation}
 		\begin{aligned}
 			\int_{\mathbb R^2\setminus B_\rho\rho(0)}|\psi_\ve(x,y)|dxdy=\int_{\mathbb R^2\setminus B_\rho(0)}\psi_\ve(x,y)dxdy&=2\pi \int_\rho^{+\infty}\left(-f_\ve''(r)+\frac{1}{\sqrt\ve}\right)dr=\frac{2 \ve + \delta^{2}}{\sqrt{\ve} \sqrt{4 \ve + \rho^{2}}}-\frac{\rho}{\sqrt\ve}.
 		\end{aligned}
 	\end{equation}
 Repeating the computation in the proof of point $(i)$ and taking the limit as $\ve\to 0 $ yields point $(ii)$.
 \end{proof}
\bibliographystyle{alphaabbr}
	\bibliography{bibliography11}

\end{document}